\documentclass[11pt,a4paper]{article}
\pdfoutput=1

\usepackage{abraces}
\usepackage[utf8]{inputenc}
\usepackage{amssymb}
\usepackage{amsmath}
\usepackage{amsfonts}
\usepackage{bbm}
\usepackage{amsthm}
\usepackage{mathrsfs}
\usepackage{hyperref}
\usepackage{color}
\usepackage[margin=2.41cm]{geometry}
\usepackage[all,cmtip]{xy}
\usepackage{graphicx}
\usepackage{float}
\usepackage{varwidth}
\usepackage{comment}
\usepackage{enumitem}
\usepackage{cancel}

\usepackage{bm}
\usepackage{mathtools}

\usepackage[inkscapelatex=false]{svg}

\usepackage{upgreek}
\usepackage{rotating}

\usepackage{tikz}
\usetikzlibrary{shapes.geometric}

\usepackage{tikz}
\usetikzlibrary{cd,nfold,matrix,arrows,calc,decorations.pathmorphing,fit,shapes.geometric,decorations.pathreplacing,positioning}
\tikzset{Rightarrow/.style={double equal sign distance,>={Implies},->},
triple/.style={-,preaction={draw,Rightarrow}},
quadruple/.style={preaction={draw,Rightarrow,shorten >=0pt},shorten >=1pt,-,double,double
distance=0.2pt}}

\usepackage{quiver}

\mathchardef\mhyphen="2D

%%%%%%

\definecolor{darkred}{rgb}{0.8,0.1,0.1}
\hypersetup{
	colorlinks=true,         % false: boxed links; true: colored links,false is default
	linkcolor=darkred,
	citecolor=blue,
}

\theoremstyle{plain}
\newtheorem{theo}{Theorem}[section]
\newtheorem{lem}[theo]{Lemma}
\newtheorem{propo}[theo]{Proposition}
\newtheorem{cor}[theo]{Corollary}

\theoremstyle{definition}
\newtheorem{defi}[theo]{Definition}

\newenvironment{ex}
{\pushQED{\qed}\exx}
{\popQED\endexx}

\newenvironment{rem}
{\pushQED{\qed}\remm}
{\popQED\endremm}

\newenvironment{constr}
{\pushQED{\qed}\constrr}
{\popQED\endconstrr}

\numberwithin{equation}{section}

\def\nn{\nonumber}

\def\bbK{\mathbb{K}}
\def\bbR{\mathbb{R}}
\def\bbC{\mathbb{C}}
\def\bbN{\mathbb{N}}

\def\h{\mathfrak{h}}
\def\g{\mathfrak{g}}

\def\PN{\mathrm{PsNat}}
\def\Mod{\mathbf{Mod}}
\def\KZ{\mathrm{KZ}}
\def\CM{\mathrm{CM}}
\def\Li{\mathrm{Li}}

\def\hom{\underline{\mathrm{hom}}}
\def\End{\mathrm{End}}

\def\Sym{\mathrm{Sym}}

\def\ps{\mathrm{ps}}
\def\id{\mathrm{id}}
\def\Id{\mathrm{Id}}
\def\ID{\mathrm{ID}}

\def\dd{\mathrm{d}}

\def\op{\mathrm{op}}

\def\Vec{\mathbf{Vec}}

\def\Ch{\mathbf{Ch}}
\def\dgMod{\mathbf{dgMod}}

\def\p{\mathbf{p}}
\def\q{\mathbf{q}}
\def\j{\mathbf{j}}
\def\k{\mathbf{k}}
\def\BB{\mathbf{B}}
\def\CC{\mathbf{C}}
\def\DD{\mathbf{D}}

\def\LL{\mathbf{L}}
\def\RR{\mathbf{R}}

\def\dgCat{\mathbf{dgCat}}

\def\ad{\mathrm{ad}}

\def\gl{\mathfrak{gl}}

\def\A{\mathcal{A}}
\def\B{\mathcal{B}}
\def\C{\mathcal{C}}

\def\F{\mathcal{F}}
\def\G{\mathcal{G}}
\def\H{\mathcal{H}}
\def\I{\mathcal{I}}
\def\L{\mathcal{L}}
\def\M{\mathcal{M}}

\def\P{\mathcal{P}}
\def\Q{\mathcal{Q}}
\def\R{\mathcal{R}}
\def\T{\mathsf{T}}
\def\U{\mathcal{U}}
\def\V{\mathcal{V}}

\makeatletter
\newcommand{\xRrightarrow}[2][]{\ext@arrow 0359\Rrightarrowfill@{#1}{#2}}
\newcommand{\Rrightarrowfill@}{\arrowfill@\equiv\equiv\Rrightarrow}
\newcommand{\xLleftarrow}[2][]{\ext@arrow 3095\Lleftarrowfill@{#1}{#2}}
\newcommand{\Lleftarrowfill@}{\arrowfill@\Lleftarrow\equiv\equiv}
\makeatother

\def\sk{\vspace{2mm}}

\makeatletter
\let\@fnsymbol\@alph
\makeatother

%Checks if some references are not used
%\usepackage{refcheck}
%

%%%%%%%%%%%%%%%%%%%%%%%%%%%%%%%%%%%%
%%%%%%%%%%%%%%%%%%%%%%%%%%%%%%%%%%%%

\title{%
Cartier integration of infinitesimal 2-braidings via 2-holonomy of the CMKZ 2-connection, I: Hexagonators and the Breen polytope
}

\author{%
Cameron James Deverall Kemp\vspace{4mm}\\
{\small School of Mathematical Sciences, University of Nottingham,}\\
{\small University Park, Nottingham NG7 2RD, United Kingdom.}\vspace{4mm}\\
{\small \begin{tabular}{ll}
Email: & \href{mailto:cameron.kemp@nottingham.ac.uk}{\texttt{cameron.kemp@nottingham.ac.uk}}
\vspace{2mm}
\end{tabular}
}
}

\date{June 2025}

%%%%%%%%%%%%%%%%%%%%%%%%%%%%%%%%%%%%
%%%%%%%%%%%%%%%%%%%%%%%%%%%%%%%%%%%%

\begin{document}

\maketitle

\begin{abstract}
\noindent This paper follows on from ``Syllepses from 3-shifted Poisson structures and second-order integration of infinitesimal 2-braidings". Given a symmetric strict infinitesimal 2-braiding on a symmetric strict monoidal cochain 2-category, we construct a candidate hexagonator series as a 2-holonomy with respect to the Cirio-Martins-Knizhnik-Zamolodchikov (CMKZ) fake flat 2-connection over the configuration space of 3 distinguishable particles on the complex line, $Y_3$. The second-order term of the hexagonator series is computed and found to agree with the ``infinitesimal hexagonator" from the aforementioned work. Finally, we assume a coherent totally symmetric strict infinitesimal 2-braiding and prove that the Breen polytope axiom is satisfied by translating it into a 2-loop in $Y_3$.
\end{abstract}
\vspace{-3mm}

\paragraph*{Keywords:}Knizhnik-Zamolodchikov 2-connection, braided monoidal 2-categories, deformation quantisation, 2-holonomy, infinitesimal 2-braidings, higher gauge theory, Yang-Baxter equation, monodromy
\vspace{-2mm}

\paragraph*{MSC 2020:} 17B37, 18N10, 53D55, 32S40
\vspace{-2mm}

%%%%%%%%%%%%%%%%%%%%%%%%%%%%%%%%%%%%
%%%%%%%%%%%%%%%%%%%%%%%%%%%%%%%%%%%%
\subsubsection*{Conventions and notations}
\begin{itemize}
\item Throughout this paper our base algebra $R$ is a generic commutative associative unital $\bbC$-algebra.
\item Boldface is used to denote a category or highlight the term being defined whereas italics are used for emphasis, e.g., a \textbf{cochain 2-category} is a $\Ch_R^{[-1,0]}$-category and infinitesimal 2-braidings are defined to be \textit{pseudo}natural rather than 2-natural.
\item We sometimes use the notation $\equiv$ to stress that the equality holds between pseudonatural transformations.
\item We suppress factors of $\hbar:=\frac{h}{2i\pi}$ in Sections \ref{sec:Constructing Hex} and \ref{sec:Breen} and use notations such as $\varepsilon^{t_{23}}:=e^{\ln(\varepsilon)t_{23}}$ and $\Phi_{ijk}:=\Phi_\KZ(t_{ij},t_{jk})$ to keep expressions compact.
\end{itemize}

\newpage
\tableofcontents
\newpage
\section{Introduction}
\subsection{A brief historical overview}
The Yang-Baxter equation (YBE) first arose in the context of the (1+1)-dimensional many-body problem \cite{Yang} and exactly solvable lattice models \cite{Baxter}. Specifically, given a vector space $V$, an automorphism $R\in\mathrm{Aut}(V\otimes V)$ is said to be an \textbf{$R$-matrix} if it solves the equation
\begin{equation}\label{eq:YBE}
R_{12}R_{23}R_{12}:=(R\otimes\id)(\id\otimes R)(R\otimes\id)=(\id\otimes R)(R\otimes\id)(\id\otimes R)=:R_{23}R_{12}R_{23}
\end{equation}
in $\mathrm{Aut}(V\otimes V\otimes V)$. The equation \eqref{eq:YBE} expresses a coherency condition for the scattering matrix $R$ to factorise to that of the two-body problem, see \cite{Jimbo2}. The YBE played a central r\^{o}le in the Leningrad school's development of the quantum inverse scattering method (QISM) \cite{QISM,Sklyanin}, a technique for reconstructing the potential from given scattering data of a (1+1)-dimensional quantum integrable system. The fact that the algebraic machinery of the QISM looked like a deformed symmetry algebra motivated Kulish and Reshetikhin \cite{Kulish} to construct an explicit $R$-matrix acting on higher spin representations of $\mathfrak{sl}(2,\bbC)$.
\sk

In parallel, the Knizhnik-Zamolodchikov (KZ) system of complex linear first-order partial differential equations were derived in \cite{KZ} from the SU(2) Wess–Zumino–Witten model in two-dimensional conformal field theory. In particular, the equations govern the correlation functions of the theory and it was demonstrated that the equations of the system are compatible with one another, i.e., the system is consistent.
\sk

Around this time, Drinfeld \cite{Drinfeld85} and Jimbo \cite{Jimbo} independently generalised the Kulish-Reshetikhin result and discovered the existence of nontrivial $R$-matrices on non-cocommutative Hopf algebra deformations $U_q\g$ of the universal enveloping algebra of a semisimple/simple, respectively, complex Lie algebra $\g$. In \cite[Section 10]{Drinfeld86}, Drinfeld abstracted the notion of a \textbf{quasitriangular} Hopf algebra $(H,\mu,\eta,\Delta,\varepsilon,S,R)$ as one equipped with an invertible element $R\in H\otimes H$ which controls the potential lack of cocommutativity,
\begin{equation}\label{eq:R measures quasi-cocommutativity}
\Delta^\op(x)=R\Delta(x)R^{-1}\quad,
\end{equation}
and satisfies the \textbf{hexagon} conditions:
\begin{equation}\label{eq:hexagon for quasitriangular}
[\Delta\otimes\id_H](R)=R_{13}R_{23}\qquad,\qquad[\id_H\otimes\Delta](R)=R_{13}R_{12}\quad.
\end{equation}
He then notes that the quasitriangular structure $R$ satisfies the YBE \eqref{eq:YBE} and provides, for $H$-modules $U,V\in{}_H\Mod$, an $H$-linear isomorphism from $U\otimes_HV$ to $V\otimes_HU$. For these reasons, it is sometimes called a \textit{universal} $R$-matrix.
\sk

Motivated by the form of the KZ equations (and the consistency of the system), Kohno \cite[Subsection 1.1]{Kohno} defined, from the canonical symmetric invariant 2-tensor $t\in(\Sym^2\g)^\g$ of a simple complex Lie algebra $\g$ and a generic finite-dimensional representation $\rho:\g\to\End(V)$, a flat connection on a trivial principal $G$-bundle over the configuration space of $n\in\bbN^*$ distinguishable particles on the complex line $Y_n:=\{(z_1,\ldots,z_n)\in\bbC^n|\mathrm{~if~}i\neq j\mathrm{~then~}z_i\neq z_j\}$. Such a flat connection is manifestly invariant under pullback by the natural action of $\mathrm{S}_n$ on $Y_n$ thus descends to a flat connection over the configuration space of $n$ indistinguishable particles on the complex line $X_n:=Y_n/S_n$. This, in turn, provides a monodromy representation $\rho_n^\KZ:\pi_1(X_n)\to\mathrm{Aut}(V^{\otimes n})$ which is important because the fundamental group $\pi_1(X_n)$ was already well-known\footnote{See \cite{Fox}.} to be \textbf{Artin's braid group on $n$ strands} \cite{Artin}, i.e., $\{\sigma_i\}_{1\leq i\leq n-1}$ such that $\sigma_i\sigma_j=\sigma_j\sigma_i$ if $|i-j|\geq2$ and $\sigma_i\sigma_{i+1}\sigma_i=\sigma_{i+1}\sigma_i\sigma_{i+1}$. Kohno then showed in \cite[Subsection 1.2]{Kohno} that this monodromy representation is equivalent to the one induced by the $R$-matrices of the aforementioned Drinfeld-Jimbo quantum groups $U_q\g$ which allows for a concrete description \cite[Theorem 1.2.8]{Kohno} via the explicit form in, say, \cite{Jimbo1}.  

Given a bialgebra $(A,\mu,\eta,\Delta,\epsilon)$, the category of modules ${}_A\Mod$ is equipped with a canonical monoidal structure where the monoidal product comes from the coproduct $\Delta$ and its coassociativity allows one to use the usual associativity constraint from the category of vector spaces. With a view to generalising the associativity constraint, Drinfeld introduced \cite[(1.1)-(1.4)]{Drinfeld89} the notion of a \textbf{quasibialgebra} $(A,\mu,\eta,\Delta,\varepsilon,\Phi)$ where: $(A,\mu,\eta)$ is a unital associative algebra, $\Delta:A\to A\otimes A$ is an algebra homomorphism, $\varepsilon:A\to\bbK$ is an algebra homomorphism which acts as a counit for the coproduct $\Delta$ and the \textbf{Drinfeld associator} $\Phi\in A\otimes A\otimes A$ is an invertible element such that:
\begin{subequations}\label{subeq:Drinfeld associator axioms}
\begin{alignat}{3}
[\id\otimes\Delta]\big(\Delta(a)\big)=&\,\Phi[\Delta\otimes\id]\big(\Delta(a)\big)\Phi^{-1}\quad&&,\\
[\id\otimes\id\otimes\Delta](\Phi)[\Delta\otimes\id\otimes\id](\Phi)=&\,\Phi_{234}[\id\otimes\Delta\otimes\id](\Phi)\Phi_{123}\quad&&,\\
[\id\otimes\varepsilon\otimes\id](\Phi)=&\,1\quad&&.
\end{alignat}
\end{subequations}
Even though the notion of a braided monoidal category had not been defined yet, he anticipated the need to define a quasitriangular structure $R$ on a quasibialgebra by weakening the hexagon conditions \eqref{eq:hexagon for quasitriangular} as in \cite[(3.9)]{Drinfeld89}:
\begin{equation}\label{subeq:hexagon for quasiHopf}
[\Delta\otimes\id_H](R)=\Phi_{312}R_{13}\Phi_{132}^{-1}R_{23}\Phi\qquad,\qquad[\id_H\otimes\Delta](R)=\Phi_{231}^{-1}R_{13}\Phi_{213}R_{12}\Phi^{-1}\quad.
\end{equation}
Actually, the relaxed hexagon conditions \eqref{subeq:hexagon for quasiHopf} allow for the following simple concrete ansatz for a nontrivial $R$-matrix. Suppose we are given a generic complex Lie algebra $\g$ and a symmetric invariant 2-tensor $t\in(\Sym^2)^\g$, extend the cocommutative Hopf algebra structure of $U\g$ to $(U\g)[[h]]$ in the trivial way; the ansatz $R=e^{\frac{h}{2}t}$ is clearly invertible in $(U\g)[[h]]\otimes(U\g)[[h]]$ and satisfies \eqref{eq:R measures quasi-cocommutativity}. Furthermore, in general, $e^{\frac{h}{2}(t_{13}+t_{23})}\neq e^{\frac{h}{2}t_{13}}e^{\frac{h}{2}t_{23}}$ and $e^{\frac{h}{2}(t_{13}+t_{12})}\neq e^{\frac{h}{2}t_{13}}e^{\frac{h}{2}t_{12}}$. Inspired by Kohno's results, Drinfeld \cite[Section 2]{Drinfeld90} constructed an invertible formal power series $\Phi_\KZ(\mathfrak{a},\mathfrak{b})$ via analysis of the KZ differential equations. He then showed that $\Phi:=\Phi_\KZ(t_{12},t_{23})$ is indeed a Drinfeld associator, i.e., satisfies \eqref{subeq:Drinfeld associator axioms} and \eqref{subeq:hexagon for quasiHopf}. 
\sk

Part of the motivation for Joyal and Street to introduce the concept of braided (strict) monoidal category (\cite[Section 1 of Chapter 3]{Street})\cite[Definition 2.1]{Joyal} was to formalise the intuition that the modules over a quasitriangular Hopf algebra should form such a category with the braiding being given by $\sigma:=\mathrm{flip}\circ R$. Cartier \cite{Cartier} abstracted from Drinfeld's algebraic deformation quantisation results \cite[Section 2]{Drinfeld90} the problem of integrating an infinitesimal braiding on a symmetric strict monoidal $\bbK$-linear category to construct a braided monoidal $\bbK[[h]]$-category with braiding and associator as above, see \cite[Theorem XX.6.1]{Kassel}. It was proved in \cite[Proposition 2.1]{Joyal} that braided monoidal categories satisfy a categorified version of the YBE \eqref{eq:YBE}. There are actually two ways to prove this, i.e., the following two diagrams commute: 
\begin{equation}\label{eq:2 proofs of YBE}
\begin{tikzcd}[column sep=0.2in,row sep=0.2in]
	& {(UV)W} \\
	{(VU)W} && {U(VW)} \\
	{V(UW)} && {U(WV)} \\
	{V(WU)} && {(UW)V} \\
	{(VW)U} && {(WU)V} \\
	{(WV)U} && {W(UV)} \\
	& {W(VU)}
	\arrow["{\sigma_{UV}\otimes1_W}"', from=1-2, to=2-1]
	\arrow["{\alpha_{UVW}}", from=1-2, to=2-3]
	\arrow["{\alpha_{VUW}}"', from=2-1, to=3-1]
	\arrow["{1_U\otimes\sigma_{VW}}", from=2-3, to=3-3]
	\arrow["{\sigma_{U(VW)}}"{description}, from=2-3, to=5-1]
	\arrow["{1_V\otimes\sigma_{UW}}"', from=3-1, to=4-1]
	\arrow["{\alpha^{-1}_{UWV}}", from=3-3, to=4-3]
	\arrow["{\sigma_{U(WV)}}"{description}, from=3-3, to=6-1]
	\arrow["{\alpha^{-1}_{VWU}}"', from=4-1, to=5-1]
	\arrow["{\sigma_{UW}\otimes1_V}", from=4-3, to=5-3]
	\arrow["{\sigma_{VW}\otimes1_U}"', from=5-1, to=6-1]
	\arrow["{\alpha_{WUV}}", from=5-3, to=6-3]
	\arrow["{\alpha_{WVU}}"', from=6-1, to=7-2]
	\arrow["{1_W\otimes\sigma_{UV}}", from=6-3, to=7-2]
\end{tikzcd}\quad
\begin{tikzcd}[column sep=0.2in,row sep=0.2in]
	& {(UV)W} \\
	{(VU)W} && {U(VW)} \\
	{V(UW)} && {U(WV)} \\
	{V(WU)} && {(UW)V} \\
	{(VW)U} && {(WU)V} \\
	{(WV)U} && {W(UV)} \\
	& {W(VU)}
	\arrow["{\sigma_{UV}\otimes1_W}"', from=1-2, to=2-1]
	\arrow["{\alpha_{UVW}}", from=1-2, to=2-3]
	\arrow["{\sigma_{(UV)W}}"{description}, curve={height=20pt}, from=1-2, to=6-3]
	\arrow["{\alpha_{VUW}}"', from=2-1, to=3-1]
	\arrow["{\sigma_{(VU)W}}"{description}, curve={height=-30pt}, from=2-1, to=7-2]
	\arrow["{1_U\otimes\sigma_{VW}}", from=2-3, to=3-3]
	\arrow["{1_V\otimes\sigma_{UW}}"', from=3-1, to=4-1]
	\arrow["{\alpha^{-1}_{UWV}}", from=3-3, to=4-3]
	\arrow["{\alpha^{-1}_{VWU}}"', from=4-1, to=5-1]
	\arrow["{\sigma_{UW}\otimes1_V}", from=4-3, to=5-3]
	\arrow["{\sigma_{VW}\otimes1_U}"', from=5-1, to=6-1]
	\arrow["{\alpha_{WUV}}", from=5-3, to=6-3]
	\arrow["{\alpha_{WVU}}"', from=6-1, to=7-2]
	\arrow["{1_W\otimes\sigma_{UV}}", from=6-3, to=7-2]
\end{tikzcd}
\end{equation}
The axiomatisation of braided monoidal 2-categories began with the work of Kapranov and Voevodsky \cite{Kapranov,Kapranov1}; the history of the development is an incredibly involved topic and we refer the interested reader to \cite[Section 2.1]{Schommer}. For our purposes, we briefly mention that the braiding/associator are \textit{pseudo}natural and the hexagon axioms are replaced by hexagonator modifications which satisfy higher coherence conditions. In particular, Breen \cite{Breen} recognised the necessity of the following higher coherence condition: one can fill the two diagrams \eqref{eq:2 proofs of YBE} with the components of the hexagonators and the pseudonaturality constraints of the braiding. The Breen polytope axiom then asserts that the two pasting diagrams are equal as in \cite[Figures C.13 and C.14]{Schommer}, \cite[(2.40)]{Me} and \eqref{eqn:Breen axiom}. 
\sk

As mentioned in \cite[Introduction]{Me}, the notion of an infinitesimal 2-braiding and the associated 2-categorical analogue of the KZ connection is due to Cirio and Faria Martins (CM) \cite{Joao1,Joao,Joao2}. The present paper constructs a hexagonator series and proves that it satisfies the Breen polytope axiom by studying the 2-holonomy of the CMKZ 2-connection as explained in the following plan. 

\subsection{Plan of the paper}\label{subsec:plan}
Section \ref{sec:Recap} begins by providing a recap of the essential notions in \cite[Section 2]{Me}. For example, we recall only the \textit{vertical} composition $\circ$ of pseudonatural transformations $\xi:F\Rightarrow G:\CC\to\DD$ and modifications $\Xi:\xi\Rrightarrow\xi'$ because we will not use the horizontal composition $\ast$ a single time in this paper. For the same reason, we will not recall the specific formulae for the symmetric monoidal structure $(\boxtimes,\tau)$ on $\dgCat_R^{[-1,0],\ps}$ nor the pentagonator $\Pi$ (and its axioms) of a braided strictly-unital monoidal $\Ch_R^{[-1,0]}$-category $(\CC,\otimes,I,\alpha,\sigma,\H^L,\H^R)$. We close Section \ref{sec:Recap} by quickly recalling definitions surrounding an infinitesimal 2-braiding $t:\otimes\Rightarrow\otimes:\CC\boxtimes\CC\to\CC$ from \cite[Sections 4 and 5]{Me}. In particular, Remark \ref{rem:index notation for t} recalls Cirio and Martins' notions of coherency \cite[Definition 17]{Joao} and total symmetry \cite[Definition 18]{Joao} which will play a prominent role in Section \ref{sec:Breen}.
\sk

Section \ref{sec:Prelims on 2hol} provides a primer on 2-holonomy and starts out by following the presentation of ideas in \cite[Sections 2 and 3]{Baez}. Subsection \ref{subsec:Lie/diff cross mods} recalls the 2-categorical generalisations of Lie groups $G$, Lie algebras $\g$ and connections $\A$ on trivial principal fibre bundles, i.e., Lie crossed modules $\mathcal{X}:=\big(H\xrightarrow{\mathcal{X}} G\,,\,G\xrightarrow{\rhd}\mathrm{Aut}(H)\big)$, differential crossed modules $\chi:=\big(\h\xrightarrow{\chi}\g\,,\,\g\xrightarrow{\rhd}T_e\mathrm{Aut}(H)\big)$ and fake flat 2-connections $(\A,\B)$, respectively. In order to talk about the 2-holonomy of a fake flat 2-connection we need to translate the underlying Lie crossed module $\mathcal{X}$ into a Lie 2-group $\mathrm{B}\mathcal{X}$ hence Subsubsection \ref{subsub:translation between cross mod and 2-group} presents our dictionary. It is well known that a connection on a trivial principal fibre bundle is equivalent to a $\g$-valued 1-form globally defined over the base space manifold $M$ and, furthermore, parallel transport with respect to such a connection is given by the path-ordered exponential which itself is smoothly functorial in a specific sense, see Theorem \ref{theo:hol as smooth functor} and Construction \ref{con:pathexp}. It is this latter viewpoint of emphasising smooth functoriality that forms the launch pad for the abstraction up towards 2-holonomy of a fake flat 2-connection, i.e., it is defined as a smooth 2-functor into the associated Lie 2-group, see Theorem \ref{theo:2hol characterisation}. 
\sk

Subsection \ref{subsec:Lie/diff cross mod on 2-functor} specifies these general concepts to the following case: any given $\Ch_R^{[-1,0]}$-functor $F:\BB\to\CC$ admits a Lie 2-group $\mathbf{Aut}_F$ of pseudonatural automorphisms and automodifications as in Remark \ref{rem:Lie 2-group of pseudoautos and automods}. Most importantly, because the Lie 2-group and its associated differential crossed module $\gl_F$ are of the general linear type, the path-ordered exponential specifies to an iterated integral series \eqref{eq:formal ODE solution as iterated integrals} and we can follow Cirio and Martins' method \cite[Theorem 8]{Joao2} of constructing a very explicit formula for the 2-holonomy \eqref{eq:CM def of 2hol for gl_F}. Subsection \ref{subsec:KZ 2-connection} makes a further specification to the case where we are given an infinitesimally 2-braided symmetric strict monoidal $\Ch_R^{[-1,0]}$-category $(\CC,\otimes,\gamma,t)$ and: the $\Ch_R^{[-1,0]}$-functor is given by the $n$-fold iterated monoidal product $F=\otimes^n:\CC^{\,\boxtimes(n+1)}\to\CC$, the base space manifold is given by the configuration space of $n+1$ distinguishable particles on the complex line $M=Y_{n+1}$ and the $\gl_{\otimes^n}$-valued 2-connection is given by the Cirio-Martins-Knizhnik-Zamolodchikov 2-connection of Definition \ref{def:CMKZ 2-connection pre-formal}. The rest of the paper is concerned with the case $n=2$ hence all (2-)paths will be in $Y_3$.
\sk

It can not be overstated how much the material in Section \ref{sec:Constructing Hex} relies on the exposition in \cite[Subsection 2.3]{BRW}. Subsection \ref{subsec:BRW 1-paths} begins by denoting $\bbC^\times:=\bbC\backslash\{0\}$ and $\bbC^{\times\times}:=\bbC\backslash\{0,1\}$ then uses the diffeomorphism $\vartheta:\bbC^{\times\times}\times\bbC^\times\times\bbC\cong Y_3$ \cite[(2.3)]{BRW} to pullback the CMKZ 2-connection. It is shown in \cite[Section 2.3]{BRW} that the hexagon axioms are satisfied by constructing a six-sided contractible loop which varies only in the first factor $\bbC^{\times\times}$. The corresponding parallel transports give rise to terms such as $\Phi_\KZ(t_{12},-t_{12}-t_{23})$ but, crucially, they can rewrite this term (into one appearing in the hexagon axiom) as $\Phi_\KZ(t_{12},t_{13})$ because an infinitesimal braiding satisfies the four-term relations $[t_{12},t_{13}+t_{23}]=0=[t_{23},t_{12}+t_{13}]$. An infinitesimal 2-braiding does not generally satisfy such relations but instead there are four-term relationator modifications hence we expect there to be nontrivial modifications $\Phi_\KZ(t_{12},-t_{12}-t_{23})\Rrightarrow\Phi_\KZ(t_{12},t_{13})$. 
\sk

There are two ways to produce such modifications: the first is by taking the difference of each series formula as in Construction \ref{con:mod from Phi_213} whereas the second is by finding 1-paths which produce each term then constructing what we call a ``vertically-interpolative" 2-path as in Subsection \ref{subsec:interpolative 2-paths}. The first method produces an explicit series formula as in \eqref{eq:mod from phi_213} whereas the second method produces an iterated integral series as in Proposition \ref{propo:solved 1st 2hol}. The fact that these two methods must yield the same modification\footnote{Because of the generators and relations of the Drinfeld-Kohno differential crossed module of Example \ref{ex: diff cross mod n=2}.} is a \textit{result}, not a problem. In other words, we learn what such iterated integral series evaluate to; we are not obligated to directly compute their limit. That being said, we do show in Constructions \ref{con:lim 2hol of 3rd verinterpol} and \ref{con:limit of 4th 2hol} and Proposition \ref{propo:left cong 2-hol} that such direct computations indeed reproduce the requisite series formulae; these are the feasible cases and we provide these computations for clarity's sake.
\sk

Section \ref{sec:Breen} shows that the Breen polytope axiom \eqref{eqn:Breen axiom} is satisfied by our hexagonator series if the infinitesimal 2-braiding $t$ is totally symmetric, strict and coherent. In particular, we show in Construction \ref{con:derivation of Breen equation} that total symmetry allows us to strip away instances of the symmetric braiding in the Breen polytope axiom to leave an equation purely in terms of $t$ whereas Subsubsection \ref{subsub:S_3 acts coherently} shows that coherency allows us to regard right pre-hexagonators with permuted indices as coming from  pullback 2-holonomies. Finally, Construction \ref{con:the Breen 2-loop} builds the contractible six-faced 2-loop in $\bbC^{\times\times}\times\bbC^\times$.

\newpage
\section{Recap of infinitesimal 2-braidings and hexagonators}\label{sec:Recap}
\begin{defi}\label{def:pseudonatural}
A \textbf{pseudonatural
transformation} $\xi:F\Rightarrow G:\CC\to\DD$ consists of, for each 0-cell $U\in \CC$, a 1-cell $\xi_U\in\DD[F(U),G(U)]^0$ and, for each pair of 0-cells $U,U'\in \CC$, a homotopy $\xi_{(-)}:\CC[U,U']\rightarrow\DD\left[F(U),G(U')\right][-1]$ such that, for all $f\in\CC[U,U']$ and $f'\in \CC[U',U'']$:
\begin{subequations}
\begin{alignat}{2}
G(f)\, \xi_U - \xi_{U'}\,F(f)\,&=\partial\xi_f+\xi_{\partial f}\quad&&,\label{eq:dubindex is homotopy}\\
\xi_{f' f}\,&=\,\,\xi_{f'}\, F(f) + G(f')\,\xi_f\quad&&.\label{eqn:dubindex splits prods}
\end{alignat}
\end{subequations}
A \textbf{pseudonatural endomorphism} $\upsilon\in\PN_F$ is one of the form $\upsilon : F\Rightarrow F$.
\end{defi}
Definition \ref{def:pseudonatural} is noticeably more compact than \cite[Definition 2.1]{Me}. The $\Ch_R^{[-1,0]}$-functors, $F$ and $G$, preserve units hence plugging $f'=f=1_U$ into \eqref{eqn:dubindex splits prods} gives us
\begin{equation}\label{eqn: homotopy kills unit}
\xi_{1_U}=2\xi_{1_U}\qquad\implies\qquad\xi_{1_U}=0\quad.
\end{equation}
In other words, the lax unity axiom \cite[(2.3a)]{Me} is redundant given the lax associativity axiom \cite[(2.3b)]{Me}. Furthermore, a homotopy $\xi_{(-)}:\CC[U,U']\rightarrow\DD\left[F(U),G(U')\right][-1]$ between truncated homs automatically gives $\xi_k=0$ for $k\in\CC[U,U']^{-1}$ so plugging $f=k$ into \eqref{eq:dubindex is homotopy} gives
\begin{subequations}
\begin{equation}\label{eq:naturality of homotopy components}
G(k)\, \xi_U - \xi_{U'}\,F(k)\,=\, \xi_{\partial k}\quad,
\end{equation}
whereas plugging $f=g\in\CC[U,U']^0$ into \eqref{eq:dubindex is homotopy} yields
\begin{equation}\label{eqn: dubindex ind as homotopy}
G(g)\, \xi_U - \xi_{U'}\,F(g)\,=\partial\xi_g\quad.
\end{equation}
\end{subequations}
\begin{constr}\label{con:vert comp pseudo}
Given $F\xRightarrow{\xi}G\xRightarrow{\theta}H$, the \textbf{vertical composite pseudonatural transformation} $\theta\xi:F\Rightarrow H$
is defined by:
\begin{equation}\label{eq:vercomp pseudos}
(\theta\xi)_U\,:=\, \theta_U\,\xi_U\qquad,\qquad(\theta \xi)_f\,:=\, \theta_f\,\xi_U + \theta_{U'}\,\xi_f\quad.
\end{equation}
This composition
is associative, unital with respect to the \textbf{identity pseudonatural transformation}
$1:=\Id_F : F\Rightarrow F$, defined by $(\Id_F)_U:=1_{F(U)}$ and
$(\Id_{F})_f:=0$, and functorial with respect to the \textbf{sub/postscript} notation. Explicitly, denoting:
\begin{equation}\label{eq:whiskering pseudo by functor}
(\xi'_F)_U:=\xi'_{F(U)}\quad,\quad(\xi'_F)_f:=\xi'_{F(f)}\quad\qquad,\qquad\quad F'(\xi)_U:=F'(\xi_U)\quad,\quad F'(\xi)_f:=F'(\xi_f)\,,
\end{equation} 
by $\xi'_F$ and $F'(\xi)$, respectively, it is straightforward to check:
\begin{subequations}
\begin{alignat}{4}
(\theta'\xi')_F&=\theta'_F\xi'_F\qquad&&,\qquad F'(\theta\xi)=F'(\theta)F'(\xi)\quad&&&,\\
1_F&=1\qquad&&,\qquad\,\,\,F'(1)=1\quad&&&.
\end{alignat}
\end{subequations}
\end{constr}
A pseudonatural transformation $\xi:F\Rightarrow G:\CC\rightarrow\DD$ is a \textbf{pseudonatural isomorphism} if there exists another pseudonatural transformation $\xi^{-1}:G\Rightarrow F:\CC\rightarrow\DD$ such that $\xi^{-1}\xi=\Id_F$ and $\xi\,\xi^{-1}=\Id_G$. A \textbf{pseudonatural automorphism} $\upsilon\in\PN_F^\times$ is a pseudonatural endomorphism which is also a pseudonatural isomorphism.
\begin{rem}\label{rem:pseudonatural endo is auto iff}
The formulae for the vertical composition and unit in Construction \ref{con:vert comp pseudo} tells us that a pseudonatural transformation $\xi:F\Rightarrow G:\CC\rightarrow\DD$ is a pseudonatural isomorphism if and only if the cochain components $\xi_U\in\DD[F(U),G(U)]^0$ are isomorphisms $\xi_U:F(U)\cong G(U)$. In other words, it can be easily checked that setting:
\begin{equation}
\xi^{-1}_U:=(\xi_U)^{-1}\qquad,\qquad\xi^{-1}_f:=-(\xi_{U'})^{-1}\xi_f(\xi_U)^{-1}\quad,
\end{equation}
produces a well-defined \textbf{inverse pseudonatural transformation} $\xi^{-1}:G\Rightarrow F:\CC\rightarrow\DD$.
\end{rem}

\begin{defi}\label{defi: modification}
A \textbf{modification} $\Xi:\xi\Rrightarrow\xi':F\Rightarrow G:\CC\rightarrow\DD$ consists of, for each 0-cell $U\in\CC$, a 2-cell $\Xi_U:\xi_U\Rightarrow\xi'_U:F(U)\rightarrow G(U)$, i.e.,
\begin{subequations}
\begin{equation}
\partial\Xi_U=\xi_U-\xi'_U
\end{equation}
such that, for every $f\in\CC[U,U']$,
\begin{equation}\label{eqn:mod single condition}
\Xi_{U'}F(f)+\xi_f=\xi'_f+G(f)\Xi_U\quad.
\end{equation}
\end{subequations}
An \textbf{endomodification} $\Xi\in\Mod_F$ is a modification of the form $\Xi:\xi\Rrightarrow\xi':F\Rightarrow F$.
\end{defi}
\begin{constr}\label{con:lat comp mods}
Given $\xi\overset{\Xi}{\Rrightarrow}\xi'\overset{\Xi'}{\Rrightarrow}\xi''$, the \textbf{lateral composite} $\Xi'\cdot\Xi:\xi\Rrightarrow\xi''$ is defined by
\begin{subequations}
\begin{flalign}
(\Xi'\cdot\Xi)_U \,:=\, \Xi'_U + \Xi_U\quad.
\end{flalign}
This composition is associative, unital
with respect to the \textbf{zero modification} $\mathbf{0}:=\ID_\xi:\xi\Rrightarrow\xi$, defined by $(\ID_\xi)_U:=0$, and invertible with respect to the \textbf{reverse} $\overleftarrow{\Xi}:\xi'\Rrightarrow\xi$, defined by
\begin{equation}\label{interior inverse modifications}
\overleftarrow{\Xi}_U:=-\Xi_U\quad.
\end{equation}
\end{subequations}
\end{constr}
\begin{constr}\label{con:vertcomp mods}
Given $\Xi:\xi\Rrightarrow\xi':F\Rightarrow G$ and $\Theta:\theta\Rrightarrow\theta':G\Rightarrow H$, the \textbf{vertical composite modification} $\Theta\Xi:\theta\xi\Rrightarrow\theta'\xi':F\Rightarrow H$ is defined by
\begin{flalign}\label{eq:vertcomp mods}
(\Theta \Xi)_U\,:=\, \Theta_U\xi'_U + \theta_U\Xi_U\quad.
\end{flalign}
This composition is associative, unital
with respect to the particular identities
$\ID_{\Id_F}:\Id_F\Rrightarrow\Id_F$ and functorial, i.e., the exchange law between vertical and lateral composition is upheld,
\begin{equation}\label{eq:vert and lat exchange of mods}
(\Theta' \Xi')\cdot(\Theta \Xi)=(\Theta'\cdot\Theta) (\Xi'\cdot\Xi)\quad,
\end{equation}
and the vertical composition of two lateral unit modifications is again a lateral unit,
\begin{equation}\label{eq:vercomp of lat unit mods is lat unit}
\ID_\theta\ID_\xi=\ID_{\theta\xi}\quad.
\end{equation}
Using that $\partial\Xi_U=\xi_U-\xi'_U$ and $\partial\Theta_U=\theta_U-\theta'_U$ together with the truncation of the hom-complexes allows us to equally write the vertical composition of modifications \eqref{eq:vertcomp mods} as\footnote{This fact is crucial to prove that the Peiffer identity holds for the differential crossed module of pseudonatural endomorphisms and endomodifications as in \eqref{eq:Peiffer identity holds for gl_F}.}
\begin{equation}\label{eq:alt vertcompo mod}
(\Theta \Xi)_U=\Theta_U\left(\xi_U-\partial\Xi_U\right)+\left(\theta'_U+\partial\Theta_U\right)\Xi_U=\Theta_U\xi_U+\theta'_U\Xi_U\quad.
\end{equation} 
We denote the \textbf{pre/post-whiskering}, respectively, by
\begin{equation}
\Theta\xi':=\Theta\,\ID_{\xi'}\qquad,\qquad\theta\Xi:=\ID_\theta\,\Xi\quad.
\end{equation}
Using this notation, we can rewrite \eqref{eq:vertcomp mods} and \eqref{eq:alt vertcompo mod}, respectively, as  
\begin{equation}\label{eq:nice vercomp mods}
\Theta\Xi=\Theta\xi'\cdot\theta\Xi=\theta'\Xi\cdot\Theta\xi\quad.
\end{equation}
\end{constr}
If $\xi,\xi':F\Rightarrow G$ are pseudonatural isomorphisms then we call $\Xi:\xi\Rrightarrow\xi':F\Rightarrow G$ an \textbf{isomodification}, in which case 
\begin{equation}\label{eq:inverse mod}
\Xi^{-1}:=\xi^{-1}\overleftarrow{\Xi}\xi'^{-1}:\xi^{-1}\Rrightarrow\xi'^{-1}:G\Rightarrow F
\end{equation}
clearly defines the \textbf{inverse modification}. An \textbf{automodification} $\Xi\in\Mod_F^\times$ is a modification between pseudonatural automorphisms.
\begin{constr}\label{con:add mods}
Suppose we have modifications $\Xi:\xi\Rrightarrow\xi':F\Rightarrow G$ and $\Upsilon:\upsilon\Rrightarrow\upsilon':F\Rightarrow G$ then, for $r,r'\in R$, we define $r\Xi+r'\Upsilon:r\xi+r'\upsilon\Rrightarrow z\xi'+w\upsilon':F\Rightarrow G$ by
\begin{equation}\label{eq:add pseudos and mods}
(r\xi+r'\upsilon)_U:=r\xi_U+r'\upsilon_U\,\,,\,\,
(r\xi+r'\upsilon)_f:=r\xi_f+r'\upsilon_f\qquad,\qquad(r\Xi+r'\Upsilon)_U:=r\Xi_U+r'\Upsilon_U\,.
\end{equation}
Such addition/scaling operations are evidently Abelian and $\circ$ of pseudonatural transformations \eqref{eq:vercomp pseudos}/modifications \eqref{eq:vertcomp mods} is $R$-bilinear. We define the \textbf{coboundary} of $\Xi:\xi\Rrightarrow\xi'$ as
\begin{equation}\label{eq:coboundary pseudonat}
\partial\Xi:\equiv\xi-\xi'\qquad.
\end{equation}
\end{constr}
\begin{rem}\label{rem:index notation for t}
The tricategory $\dgCat_R^{[-1,0],\ps}$ admits a canonical symmetric monoidal structure $(\boxtimes,\tau)$ which allows us to reproduce the usual definition of a symmetric strict monoidal $\Ch_R^{[-1,0]}$-category $(\CC,\otimes,\gamma)$, an \textbf{infinitesimal 2-braiding} is then defined as a pseudonatural endomorphism of the form $t:\otimes\Rightarrow\otimes:\CC^{\boxtimes2}\rightarrow\CC$. The associativity of $\boxtimes$ and $\otimes$ allows us to unambiguously define $\otimes^n:\CC^{\boxtimes(n+1)}\rightarrow\CC$. In particular, for $n=2$, we can define: 
\begin{equation}
t_{12}:\equiv\otimes(t\boxtimes1)\quad,\quad t_{23}:\equiv\otimes(1\boxtimes t)\quad,\quad t_{1(23)}:\equiv t_{\id_\CC\,\boxtimes\,\otimes}\quad,\quad  t_{(12)3}:\equiv t_{\otimes\,\boxtimes\,\id_\CC}\,.
\end{equation}
The fact that $\gamma$ is natural, involutive and satisfies the hexagon axiom means that we have
\begin{equation}
t_{13}:\equiv\otimes\left([\gamma^{-1}\boxtimes1](1\boxtimes t)_{\tau_{\CC,\CC}\,\boxtimes\,\id_\CC}[\gamma\boxtimes1]\right)\equiv\otimes\left([1\boxtimes\gamma^{-1}](t\boxtimes1)_{\id_\CC\,\boxtimes\,\tau_{\CC,\CC}}[1\boxtimes\gamma]\right)\,.
\end{equation}
We then say that $t$ is \textbf{strict} if it satisfies:
\begin{equation}\label{eq:strict t as index}
t_{1(23)}\equiv t_{12}+t_{13}\qquad,\qquad t_{(12)3}\equiv t_{23}+t_{13}\quad,
\end{equation}
which then means that we have, more generally, for $k\neq i\neq j\neq k\,$: 
\begin{equation}\label{eq:strict t notation}
t_{k(ij)}\equiv t_{ki}+t_{kj}\qquad,\qquad t_{(ij)k}\equiv t_{ik}+t_{jk}\quad.
\end{equation}
A \textbf{symmetric} $t$ satisfies $t\equiv\gamma^{-1}t_{\tau_{\CC,\CC}}\gamma$ which then means that we have, for $k\neq i\neq j\neq k\,$: 
\begin{equation}\label{eq:sym t notation}
t_{ij}\equiv t_{ji}\qquad,\qquad t_{(ij)k}\equiv t_{k(ij)}\quad.
\end{equation} 
A \textbf{totally symmetric} $t$ is a symmetric $t$ which further satisfies\footnote{See \cite[Remarks 5.18 and 5.20]{Me} for how the latter is derived.}
\begin{equation}\label{left total symmetry}
t_{\gamma_{12}}=0\qquad\implies\qquad t_{(ij)k}=t_{(ji)k}\quad.
\end{equation}
By\footnote{See \cite[Definition 5.1 to (5.7)]{Me}.} $\L:=t_{t_{12}}:[t_{12},t_{(12)3}]\Rrightarrow0$, we mean the \textbf{left 4-term relationator} given by 
\begin{equation}
\L_{UVW}:=t_{t_{UV}\boxtimes1_W}
\end{equation}
and likewise for the \textbf{right 4-term relationator} $\R:=t_{t_{23}}:[t_{23},t_{1(23)}]\Rrightarrow0$. If 
\begin{equation}
-\R_{213}=\L+\R=-\L_{132}
\end{equation}
then we say $t$ is \textbf{coherent}.
\end{rem}
We choose Drinfeld's KZ series \eqref{subeq:BRW's explicit Phi formula} as our ansatz associator $\alpha\equiv\Phi:\equiv\Phi(t_{12},t_{23})$ and the braiding as $\sigma\equiv\gamma\,e^{i\pi\hbar t}$. We denote $\Phi_{ijk}:=\Phi(t_{ij},t_{jk})$ so that, for a symmetric $t$, the \textbf{left pre-hexagonator} is of the form
\begin{subequations}\label{subeq:pre symstr pre-hexes}
\begin{equation}\label{eq:ansatz left hexagonator}
\LL:\Phi_{231}e^{i\pi\hbar t_{1(23)}}\Phi \Rrightarrow e^{i\pi\hbar t_{13}}\Phi_{213}e^{i\pi\hbar t_{12}}
\end{equation}
and the \textbf{right pre-hexagonator} is of the form
\begin{equation}\label{eq:sym t right pre-hex}
\RR:\Phi_{213}e^{i\pi\hbar t_{(12)3}}\Phi_{321}\Rrightarrow e^{i\pi\hbar t_{13}}\Phi_{231}e^{i\pi\hbar t_{23}}\quad.
\end{equation}
\end{subequations}
The left/right hexagonators are given, respectively, by $\H^L=\gamma_{1(23)}\LL$ and $\H^R=\gamma_{(12)3}\RR$. The relevant axiom we are concerned with in this paper is the \textbf{Breen polytope axiom},
\begin{align}
&\H^L_{UWV}\alpha^{-1}_{UWV}(1_U\otimes\sigma_{VW})\alpha_{UVW}+\alpha_{WVU}\sigma_{1_U\,\boxtimes \,\sigma_{VW}}\alpha_{UVW}-\alpha_{WVU}(\sigma_{VW}\otimes1_U)\alpha^{-1}_{VWU}\H^L_{UVW}\nn\\&=(1_W\otimes\sigma_{UV})\alpha_{WUV}\H^R_{UVW}\alpha_{UVW}-\sigma_{\sigma_{UV}\,\boxtimes \,1_W}-\alpha_{WVU}\H^R_{VUW}\alpha_{VUW}(\sigma_{UV}\otimes1_W)\quad.\label{eqn:Breen axiom}
\end{align}

\newpage
\section{Preliminaries on 2-holonomy}\label{sec:Prelims on 2hol}
In this section we assume that every manifold and Lie algebra is complex finite-dimensional.

\subsection{Lie crossed modules and their differential crossed modules}\label{subsec:Lie/diff cross mods}
We recall the notions of Lie/differential crossed module \cite[Definition 1.3/4]{On2dhol}; both notions will be appealed to when defining the 2-holonomy of a fake flat 2-connection.
\begin{defi}\label{def:crossed mod}
For groups $G$ and $H$, a \textbf{crossed module} $\mathcal{X}:=\big(H\xrightarrow{\mathcal{X}} G\,,\,G\xrightarrow{\rhd}\mathrm{Aut}(H)\big)$ consists of group homomorphisms $\mathcal{X}$ and $\rhd$ such that the \textbf{coboundary} $\mathcal{X}:H\rightarrow G$ is equivariant with respect to the conjugation action of $G$ on itself, 
\begin{subequations}
\begin{equation}\label{eq:conjugation equivariance}
\mathcal{X}(g\rhd h)=g\mathcal{X}(h)g^{-1}\quad,
\end{equation}
and the \textbf{Peiffer identity} is upheld,
\begin{align}\label{eq:crossmod Peiffer}
\mathcal{X}(h)\rhd h'=h\,h'\,h^{-1}\quad.
\end{align}
\end{subequations}
A \textbf{Lie crossed module} is one such that $G$ and $H$ are Lie groups and $\mathcal{X}$ and $\rhd$ are smooth.
\end{defi}
If we label the identity of every group by $e$ and denote the Lie algebra of Lie derivations of $\h:=T_eH$ by $\T_\h$ then the functor $T_e:\mathbf{Grp(Man^\mathrm{fd}_\bbC)}\rightarrow\mathbf{Lie(\Vec^\mathrm{fd}_\bbC)}$ is such that $T_e\mathrm{Aut}(H)\cong\T_\h$ for a simply connected Lie group $H$. Now we `derive' the following definition.
\begin{defi}\label{def:differential crossed module}
For Lie algebras $\g$ and $\h$, we say $\chi:=\big(\h\xrightarrow{\chi}\g\,,\,\g\xrightarrow{\rhd}\T_\h\big)$ is a \textbf{differential crossed module} if it consists of Lie algebra homomorphisms $\chi$ and $\rhd$ such that the coboundary $\chi$ is equivariant with respect to the adjoint action of $\g$ on itself, 
\begin{subequations}
\begin{equation}\label{eq:derivation equivariance}
\chi(g\rhd h)=[g,\chi(h)]\quad,
\end{equation}
and the \textbf{Peiffer identity} is upheld,
\begin{equation}\label{eq:diffcrossmod Peiffer}
\chi(h)\rhd h'=[h,h']\quad.
\end{equation}
\end{subequations}
\end{defi}
\begin{rem}\label{rem:121 Lie correspondence}
The one-to-one correspondence between simply-connected Lie groups and Lie algebras means that we can exponentiate a differential crossed module $\chi$ to a unique (up to canonical isomorphism) Lie crossed module $\mathcal{X}$ of simply-connected Lie groups, see \cite[Theorem 1.102]{Knapp} for this standard practice of exponentiating Lie algebras and their homomorphisms. 
\end{rem}
\begin{defi}\label{def:2-connection}
Given a manifold $M$ and differential crossed module $\chi=\big(\h\xrightarrow{\chi}\g\,,\,\g\xrightarrow{\rhd}\T_\h\big)$, a \textbf{$\chi$-valued 2-connection on $M$} is given by a pair $(\A,\B)$ where $\A$ is a globally defined $\g$-valued 1-form on $M$ and $\B$ is a globally defined $\h$-valued 2-form on $M$. It is called a \textbf{fake flat} 2-connection if the \textbf{$\g$-valued curvature 2-form} vanishes up to the coboundary of $\B$, i.e.,
\begin{subequations}
\begin{equation}\label{eqn:fake flat}
\F_\A:=\dd \A+\A\wedge^{[\cdot,\cdot]}\A=\chi(\B)\quad.
\end{equation}
It is called a \textbf{2-flat} 2-connection if the \textbf{$\h$-valued 2-curvature 3-form} vanishes, i.e., 
\begin{equation}\label{eqn:2-flatness}
\G_{(\A,\B)}:=\dd \B+\A\wedge^\rhd\B=0\quad.
\end{equation}
\end{subequations}
\end{defi}

\subsubsection{The translation between crossed modules and strict 2-groups}\label{subsub:translation between cross mod and 2-group}
First we recall the definition of a strict 2-group and for this we make use of the following lemma.
\begin{lem}\label{lem:hor 2-inv from ver 2-inv and 1-inv}
Given a 2-category $\C$ in which every 1-morphism $\xi:F\rightarrow G$ admits an inverse, any 2-morphism $\Xi:\xi\Rightarrow\xi':F\rightarrow G$ admits a reverse\footnote{That is, $\overleftarrow{\Xi}:\xi'\Rightarrow\xi:F\rightarrow G$ such that $\overleftarrow{\Xi}\cdot\Xi=\ID_\xi$ and $\Xi\cdot\overleftarrow{\Xi}=\ID_{\xi'}$} if and only if it admits an inverse\footnote{That is, $\Xi^{-1}:\xi^{-1}\Rightarrow\xi'^{-1}:G\rightarrow F$ such that $\Xi^{-1}\Xi=\ID_{\Id_F}$ and $\Xi\,\Xi^{-1}=\ID_{\Id_G}$}.
\end{lem}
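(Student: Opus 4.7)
The plan is to prove both directions of the iff by constructing explicit 2-categorical formulae that convert between reverses (2-inverses under vertical composition $\cdot$) and inverses (2-inverses under horizontal composition, using the 1-cell inverses supplied by the hypothesis on $\C$). The verification in each direction will amount to a direct application of the interchange law combined with the triviality of whiskering by identity 1-cells.

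For the forward direction, suppose $\Xi:\xi\Rightarrow\xi'$ admits a reverse $\overleftarrow{\Xi}:\xi'\Rightarrow\xi$. Picking inverses $\xi^{-1},\xi'^{-1}:G\rightarrow F$ (which exist by the hypothesis on $\C$), I would define
\begin{equation*}
\Xi^{-1}:=\xi^{-1}\,\overleftarrow{\Xi}\,\xi'^{-1}:\xi^{-1}\Rightarrow\xi'^{-1}\quad,
\end{equation*}
whose source and target agree with $\xi^{-1}$ and $\xi'^{-1}$ because $\xi'\xi'^{-1}=\Id_G$ and $\xi^{-1}\xi=\Id_F$. This mirrors the formula \eqref{eq:inverse mod} used for pseudonatural transformations, here promoted to the abstract 2-category $\C$. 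To verify $\Xi^{-1}\Xi=\ID_{\Id_F}$ and $\Xi\,\Xi^{-1}=\ID_{\Id_G}$, I would expand each horizontal composite through whiskering, repeatedly apply the interchange law, and collapse the result using the assumed lateral identities $\overleftarrow{\Xi}\cdot\Xi=\ID_\xi$ and $\Xi\cdot\overleftarrow{\Xi}=\ID_{\xi'}$; the $\xi\xi^{-1}$ and $\xi'\xi'^{-1}$ cancellations then produce the required identity 2-morphisms on $\Id_F$ and $\Id_G$.

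For the backward direction, assume instead that $\Xi$ admits a horizontal inverse $\Xi^{-1}:\xi^{-1}\Rightarrow\xi'^{-1}$ and define
\begin{equation*}
\overleftarrow{\Xi}:=\xi\,\Xi^{-1}\,\xi':\xi'\Rightarrow\xi\quad,
\end{equation*}
whose source and target are checked via $\xi\xi^{-1}=\Id_G$ and $\xi'^{-1}\xi'=\Id_F$. A symmetric interchange-law computation, this time exploiting $\Xi^{-1}\Xi=\ID_{\Id_F}$ and $\Xi\,\Xi^{-1}=\ID_{\Id_G}$ as the cancellation step, then yields $\overleftarrow{\Xi}\cdot\Xi=\ID_\xi$ and $\Xi\cdot\overleftarrow{\Xi}=\ID_{\xi'}$.

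The main (indeed only) obstacle is careful bookkeeping: tracking source and target 1-cells and whiskerings through repeated applications of interchange so that each cancellation legitimately collapses to an identity 2-morphism on an identity 1-cell. Conceptually the lemma is simply the standard fact that in a 2-category whose 1-cells are invertible, vertical and horizontal 2-invertibility coincide; the point of stating it explicitly here is to have the formulae available for later use, in particular when transporting structure between the Lie crossed module $\mathcal{X}$ and its associated 2-group $\mathrm{B}\mathcal{X}$.
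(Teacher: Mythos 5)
Your proposal is correct and follows essentially the same route as the paper: both directions define the candidate 2-cell by whiskering the given one with the 1-cell inverses (the paper writes these as $\ID_{\xi^{-1}}\overleftarrow{\Xi}\,\ID_{\xi'^{-1}}$ and $\ID_\xi\Xi^{-1}\ID_{\xi'}$, i.e.\ exactly your formulae), and verify the required identities via the interchange law, collapsing against the assumed lateral or horizontal cancellations. The only difference is that you leave the interchange bookkeeping as a sketch while the paper writes it out explicitly, but the argument is the same.
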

\begin{proof}
Let us assume that $\Xi$ admits a reverse $\overleftarrow{\Xi}:\xi'\Rightarrow\xi$ and define 
\begin{subequations}
\begin{equation}
\Xi^{-1}:=\ID_{\xi^{-1}}\overleftarrow{\Xi}\ID_{\xi'^{-1}}=\overleftarrow{\ID_{\xi^{-1}}\Xi}\,\ID_{\xi'^{-1}}=\ID_{\xi^{-1}}\overleftarrow{\Xi\,\ID_{\xi'^{-1}}}:\xi^{-1}\Rightarrow\xi'^{-1}\quad,
\end{equation}
where the latter two equalities come from the functoriality of horizontal composition. We have:
\begin{alignat}{2}
\Xi^{-1}\Xi&=\left(\big[\overleftarrow{\ID_{\xi^{-1}}\Xi}\,\ID_{\xi'^{-1}}\big]\cdot\ID_{\xi^{-1}}\right)\left(\ID_{\xi'}\cdot\Xi\right)=\left(\overleftarrow{\ID_{\xi^{-1}}\Xi}\,\ID_{\xi'^{-1}}\ID_{\xi'}\right)\cdot\left(\ID_{\xi^{-1}}\Xi\right)=\ID_{\Id_F}&&,\\
\Xi\,\Xi^{-1}&=(\Xi\cdot\ID_\xi)\left(\ID_{\xi'^{-1}}\cdot\big[\ID_{\xi^{-1}}\overleftarrow{\Xi\,\ID_{\xi'^{-1}}}\big]\right)=(\Xi\,\ID_{\xi'^{-1}})\cdot\left(\ID_\xi\ID_{\xi^{-1}}\overleftarrow{\Xi\,\ID_{\xi'^{-1}}}\right)=\ID_{\Id_G}&&.
\end{alignat}
Now assume an inverse $\Xi^{-1}:\xi^{-1}\Rightarrow\xi'^{-1}$ and choose $\overleftarrow{\Xi}:=\ID_\xi\Xi^{-1}\ID_{\xi'}:\xi'\Rightarrow\xi$ so that:
\begin{alignat}{2}
\Xi\cdot\overleftarrow{\Xi}&=\left(\Xi\,\ID_{\Id_F}\right)\cdot\left(\ID_\xi\Xi^{-1}\ID_{\xi'}\right)=(\Xi\cdot\ID_\xi)\left(\ID_{\Id_F}\cdot[\Xi^{-1}\ID_{\xi'}]\right)=\Xi\,\Xi^{-1}\ID_{\xi'}=\ID_{\xi'}\quad&&,\\
\overleftarrow{\Xi}\cdot\Xi&=(\ID_\xi\Xi^{-1}\ID_{\xi'})\cdot\left(\ID_{\Id_G}\Xi\right)=([\ID_\xi\Xi^{-1}]\cdot\ID_{\Id_G})\left(\ID_{\xi'}\cdot\Xi\right)=\ID_\xi\Xi^{-1}\Xi=\ID_\xi\quad&&.
\end{alignat}
\end{subequations}
\end{proof}
\begin{defi}\label{def:strict 2-group}
A \textbf{2-groupoid} is a 2-category satisfying the bi-implication of Lemma \ref{lem:hor 2-inv from ver 2-inv and 1-inv}, i.e., every 1-morphism and 2-morphism is invertible. 
A \textbf{strict 2-group} is a single object 2-groupoid.
\end{defi}
Our translation between a crossed module and a strict 2-group uses slightly different conventions to that in \cite[Theorem 2, Subsection 3.3]{Baez}, e.g., for us the group $H$ is identified with the 2-morphisms whose \textit{target} is the identity 1-morphism $g\xRightarrow{h}e_G$. The reason we do this relates to the choice $\partial\Xi_U=\xi_U-\xi'_U$ for $\Xi_U:\xi_U\Rightarrow\xi'_U$.
\begin{constr}\label{con:translation between 2-group and crossmod}
Given a crossed module $\mathcal{X}=\left(H\xrightarrow{\mathcal{X}} G\,,\,G\xrightarrow{\rhd}\mathrm{Aut}(H)\right)$, the strict 2-group B$\mathcal{X}$ has a single object $\bullet$ with:
\begin{enumerate}
\item[(i)]1-morphisms as $g\in G$ and the composition of $\bullet\xrightarrow{g}\bullet\xrightarrow{g'}\bullet$ given by the product $\bullet\xrightarrow{g'g}\bullet$. 
\item[(ii)]2-morphisms $g\xRightarrow{h}g'$ as elements $h\in H$ such that $\mathcal{X}(h)=g(g')^{-1}$. 
\item[(iii)]Vertical composition of $g\xRightarrow{h}g'\xRightarrow{h'}g''$ as the opposite product\footnote{This makes sense as $\mathcal{X}$ is a group homomorphism hence $\mathcal{X}(hh')=\mathcal{X}(h)\mathcal{X}(h')=g(g')^{-1}g'(g'')^{-1}=g(g'')^{-1}$.} $g\xRightarrow{hh'}g''$. This composition is thus associative, invertible and unital with respect to $e_H$\footnote{Again, $H\xrightarrow{\mathcal{X}}G$ is a group homomorphism hence $\mathcal{X}(e_H)=e_G=g\,g^{-1}$ for every $g\in G$.}. 
\item[(iv)]Horizontal composition of $g\xRightarrow{h}g''$ and $g'\xRightarrow{h'}g'''$ as\footnote{This makes sense given that $\mathcal{X}(h'[g'''\rhd h])=\mathcal{X}(h')\mathcal{X}(g'''\rhd h)=\mathcal{X}(h')g'''\mathcal{X}(h)(g''')^{-1}=g'g(g'')^{-1}(g''')^{-1}$.} $g'g\xRightarrow{h'(g'''\rhd h)}g'''g''$. This composition is thus associative, functorial, unital with respect to $e_G\xRightarrow{e_H}e_G$ and invertible with respect to\footnote{This makes sense because $\mathcal{X}\left(\left[(g'')^{-1}\rhd h\right]^{-1}\right)=\left[\mathcal{X}\big((g'')^{-1}\rhd h\big)\right]^{-1}=\left[(g'')^{-1}\mathcal{X}(h)g''\right]^{-1}=g^{-1}g''$. This is obviously a left inverse but it can be shown to also be a right inverse by noticing $[(g'')^{-1}\rhd h]^{-1}=(g'')^{-1}\rhd h^{-1}$.} $g^{-1}\xRightarrow{\left[(g'')^{-1}\rhd h\right]^{-1}}(g'')^{-1}$. 
\end{enumerate}
Conversely, given a strict 2-group $\mathbf{G}$, the crossed module $\mathcal{X}_\mathbf{G}$ is constructed as follows:
\begin{enumerate}
\item[(i)] The group $G$ is the set of 1-morphisms with the product law given by the composition law.
\item[(ii)] The group $H$ is the set of 2-morphisms of the form $g'\xRightarrow{h}e_G$ with product law given by the horizontal composition of such 2-morphisms. We define $\mathcal{X}:H\longrightarrow G$ by $\mathcal{X}(h):=g'$.
\item[(iii)] Given $g'\xRightarrow{h}e_G$ and $g\in G$, we define $g\rhd h\in H$ through \textbf{whiskered-conjugation}, 
% https://q.uiver.app/#q=WzAsNCxbMSwwLCJcXGJ1bGxldCJdLFsyLDAsIlxcYnVsbGV0Il0sWzMsMCwiXFxidWxsZXQiXSxbMCwwLCJcXGJ1bGxldCJdLFswLDEsImVfRyIsMix7ImN1cnZlIjo1fV0sWzAsMSwiZyciLDAseyJjdXJ2ZSI6LTV9XSxbMSwyLCJnIiwyLHsiY3VydmUiOjV9XSxbMSwyLCJnIiwwLHsiY3VydmUiOi01fV0sWzMsMCwiZ157LTF9IiwyLHsiY3VydmUiOjV9XSxbMywwLCJnXnstMX0iLDAseyJjdXJ2ZSI6LTV9XSxbNSw0LCJoIiwyXSxbNyw2LCJcXG1hdGhybXtJZH1fZyIsMV0sWzksOCwiXFxtYXRocm17SWR9X3tnXnstMX19IiwxXV0=
\begin{equation}
\begin{tikzcd}
	\bullet & \bullet & \bullet & \bullet
	\arrow[""{name=0, anchor=center, inner sep=0}, "{g^{-1}}"', curve={height=30pt}, from=1-1, to=1-2]
	\arrow[""{name=1, anchor=center, inner sep=0}, "{g^{-1}}", curve={height=-30pt}, from=1-1, to=1-2]
	\arrow[""{name=2, anchor=center, inner sep=0}, "{e_G}"', curve={height=30pt}, from=1-2, to=1-3]
	\arrow[""{name=3, anchor=center, inner sep=0}, "{g'}", curve={height=-30pt}, from=1-2, to=1-3]
	\arrow[""{name=4, anchor=center, inner sep=0}, "g"', curve={height=30pt}, from=1-3, to=1-4]
	\arrow[""{name=5, anchor=center, inner sep=0}, "g", curve={height=-30pt}, from=1-3, to=1-4]
	\arrow["{\Id_{g^{-1}}}"{description}, Rightarrow, from=1, to=0]
	\arrow["h"', Rightarrow, from=3, to=2]
	\arrow["{\Id_g}"{description}, Rightarrow, from=5, to=4]
\end{tikzcd}~~\stackrel{\rhd}{\longmapsto}~~\begin{tikzcd}
	\bullet &&& \bullet
	\arrow[""{name=0, anchor=center, inner sep=0}, "{e_G}"', curve={height=30pt}, from=1-1, to=1-4]
	\arrow[""{name=1, anchor=center, inner sep=0}, "{g\,g'g^{-1}}", curve={height=-30pt}, from=1-1, to=1-4]
	\arrow["{g\rhd h:=\Id_gh\Id_{g^{-1}}}"{description}, Rightarrow, from=1, to=0]
\end{tikzcd}\quad,
\end{equation}
which automatically gives us:
\begin{subequations}
\begin{alignat}{5}
\mathcal{X}(g\rhd h)&=g\,g'g^{-1}=g\mathcal{X}(h)g^{-1}\quad&&,\\e_G\rhd h&=\Id_{e_G}h\Id_{e_G^{-1}}=h\quad&&,\\(g_1g_2)\rhd h&=\Id_{g_1g_2}h\Id_{g_2^{-1}g_1^{-1}}=\Id_{g_1}\Id_{g_2}h\Id_{g_2^{-1}}\Id_{g_1^{-1}}=g_1\rhd(g_2\rhd h)\quad&&,\\
g\rhd\Id_{e_G}&=\Id_g\Id_{e_G}\Id_{g^{-1}}=\Id_{e_G}\quad&&,\\
g\rhd(h_1h_2)&=\Id_gh_1h_2\Id_{g^{-1}}=\Id_gh_1\Id_{g^{-1}}\Id_gh_2\Id_{g^{-1}}=(g\rhd h_1)(g\rhd h_2)\quad&&.
\end{alignat}
\end{subequations}
\item[(iv)] The Peiffer identity \eqref{eq:crossmod Peiffer} is secured through the pasting diagram,
% https://q.uiver.app/#q=WzAsNCxbMywwLCJcXGJ1bGxldCJdLFs2LDAsIlxcYnVsbGV0Il0sWzksMCwiXFxidWxsZXQiXSxbMCwwLCJcXGJ1bGxldCJdLFswLDEsImVfRyIsMV0sWzAsMSwiZyciLDAseyJjdXJ2ZSI6LTV9XSxbMSwyLCJnIiwxXSxbMSwyLCJnIiwwLHsiY3VydmUiOi01fV0sWzMsMCwiZ157LTF9IiwxXSxbMywwLCJnXnstMX0iLDAseyJjdXJ2ZSI6LTV9XSxbMywwLCJlX0ciLDIseyJjdXJ2ZSI6NX1dLFswLDEsImVfRyIsMix7ImN1cnZlIjo1fV0sWzEsMiwiZV9HIiwyLHsiY3VydmUiOjV9XSxbNSw0LCJoJyIsMix7InNob3J0ZW4iOnsidGFyZ2V0IjoyMH19XSxbNyw2LCJcXG1hdGhybXtJZH1fZyIsMix7InNob3J0ZW4iOnsidGFyZ2V0IjoyMH19XSxbOSw4LCJcXG1hdGhybXtJZH1fe2deey0xfX0iLDIseyJzaG9ydGVuIjp7InRhcmdldCI6MjB9fV0sWzYsMTIsImgiLDIseyJzaG9ydGVuIjp7InNvdXJjZSI6MjB9fV0sWzQsMTEsImVfSCIsMix7InNob3J0ZW4iOnsic291cmNlIjoyMH19XSxbOCwxMCwiaF57LTF9IiwyLHsic2hvcnRlbiI6eyJzb3VyY2UiOjIwfX1dXQ==
\begin{equation}
\begin{tikzcd}
	\bullet &&& \bullet &&& \bullet &&& \bullet
	\arrow[""{name=0, anchor=center, inner sep=0}, "{g^{-1}}"{description}, from=1-1, to=1-4]
	\arrow[""{name=1, anchor=center, inner sep=0}, "{g^{-1}}", curve={height=-30pt}, from=1-1, to=1-4]
	\arrow[""{name=2, anchor=center, inner sep=0}, "{e_G}"', curve={height=30pt}, from=1-1, to=1-4]
	\arrow[""{name=3, anchor=center, inner sep=0}, "{e_G}"{description}, from=1-4, to=1-7]
	\arrow[""{name=4, anchor=center, inner sep=0}, "{g'}", curve={height=-30pt}, from=1-4, to=1-7]
	\arrow[""{name=5, anchor=center, inner sep=0}, "{e_G}"', curve={height=30pt}, from=1-4, to=1-7]
	\arrow[""{name=6, anchor=center, inner sep=0}, "g"{description}, from=1-7, to=1-10]
	\arrow[""{name=7, anchor=center, inner sep=0}, "g", curve={height=-30pt}, from=1-7, to=1-10]
	\arrow[""{name=8, anchor=center, inner sep=0}, "{e_G}"', curve={height=30pt}, from=1-7, to=1-10]
	\arrow["{\Id_{g^{-1}}}"', shorten >=4pt, Rightarrow, from=1, to=0]
	\arrow["{h^{-1}}"', shorten <=4pt, Rightarrow, from=0, to=2]
	\arrow["{h'}"', shorten >=4pt, Rightarrow, from=4, to=3]
	\arrow["{e_H}"', shorten <=4pt, Rightarrow, from=3, to=5]
	\arrow["{\Id_g}"', shorten >=4pt, Rightarrow, from=7, to=6]
	\arrow["h"', shorten <=4pt, Rightarrow, from=6, to=8]
\end{tikzcd}\quad,
\end{equation}
i.e., $\mathcal{X}(h)\rhd h'=g\rhd h'=\left(\Id_gh'\Id_{g^{-1}}\right)\cdot\left(he_Hh^{-1}\right)=(\Id_g\cdot h)(h'\cdot e_H)(\Id_{g^{-1}}\cdot h^{-1})=h\,h'h^{-1}$.
\end{enumerate}
\end{constr}

\subsection{2-Holonomy as a smooth 2-functor into the Lie 2-group}
First we recall the axiomatisation of parallel transport with respect to a connection on a \textit{trivial}\footnote{See Schreiber and Waldorf \cite{SW} for the generalisation to nontrivial principal $G$-bundles with connection.} principal $G$-bundle over $M$ as a smooth functor from the path groupoid $\p_1(M)$ to B$G$ as in \cite[Theorem 1]{Baez}. Rather than working with \textbf{lazy paths} (i.e., smooth maps $p:[0,1]\to M$ such that $p$ is constant in neighbourhoods of 0 and 1) we prefer to work with the notion of a \textbf{1-path} $x\xrightarrow{p}y$ (i.e., a continuous map $p:[0,1]\to M$ which is piecewise smooth with respect to some partition of the unit interval $[0,1]$ into finitely many subintervals and such that $p(0)=x$ and $p(1)=y$). The role of each of these notions is merely to secure a well-defined composition law however all the paths we will later encounter in Section \ref{sec:Constructing Hex} will be of the second sort. Likewise, rather than working with ``bigons" as in \cite[Definition 3.1]{SW1}, we prefer to work with 2-paths as in \cite[subsubsection 2.3.2]{Joao2}.

\begin{defi}\label{def:2-path}
For 1-paths $p$ and $p'$ from $x$ to $y$, a \textbf{2-path} $p\xRightarrow{\P}p'$ is a continuous map $\P:[0,1]^2\to M$ which is piecewise smooth with respect to some partition of the unit square $[0,1]^2$ by a finite number of polygons such that:
\begin{enumerate}
\item $\P(s,0)=x$ and $\P(s,1)=y$,
\item $\P(0,r)=p(r)$ and $\P(1,r)=p'(r)$. 
\end{enumerate} 
For $s\in[0,1]$, the \textbf{cross-sectional 1-path} $x\xrightarrow{\P^s}y$ defined by $\P^s:=\P(s,\cdot):[0,1]\to M$ is indeed a continuous piecewise smooth map from $x$ to $y$ because of the above conditions on $\P$ itself. The picture we have in mind is as follows, 
\begin{figure}[H]
\centering
\scalebox{0.4}{\includesvg[width=500pt]{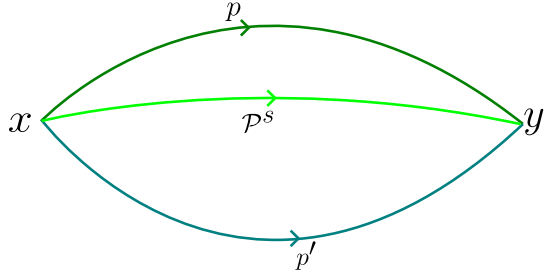}}
\caption{A generic 2-path and its cross-sectional 1-path}
\label{fig:cross-sectional 1-path}
\end{figure}
We say the 2-path $\P$ is \textbf{thin}\footnote{The name comes from the fact that thin 2-paths would sweep out zero area if there were a volume form on $M$.} if the rank of the differential $\dd\P$ is less than 2 at every point in the unit square $[0,1]^2$ for which it is defined.
\end{defi} 
Note that 2-paths $p\xRightarrow{\P}p'\xRightarrow{\P'}p''$ can be \textbf{vertically concatenated} $p\xRightarrow{\P'\cdot\P}p''$ as 
\begin{equation}\label{eqn:vert concat}
(\P'\cdot\P)(s,r):=\begin{cases}
     \P(2s,r)\,, & 0\leq s\leq\frac{1}{2}\\
      \P'(2s-1,r)\,, & \frac{1}{2}\leq s\leq1
    \end{cases}\quad.
\end{equation}
This obviously defines a new 2-path and if both $\P$ and $\P'$ are thin then so too will $\P'\cdot\P$ be. It is thus clear that thin 2-paths establish an equivalence relation on 1-paths where: transitivity comes vertical concatenation, reflexivity comes from constant 2-paths $1_p$ and symmetry comes from \textbf{reverse 2-paths} $p\xLeftarrow{\overleftarrow{\P}}p'$, i.e., $\overleftarrow{\P}:=\P\circ(\iota\times\id)$ where $\iota(s):=1-s$ is the interval inversion from \eqref{eq:interval inversion}.
\begin{rem}\label{rem:path groupoid}
Our thin 2-paths play the same role as thin homotopy equivalences in Schreiber and Waldorf, see \cite[Definition 2.1]{SW1}. Quotienting by the equivalence relation induced by thin 2-paths will guarantee the associativity, unitality and invertibility of composition. The associativity and unitality axioms could be guaranteed by working with the equivalence relation induced by smooth reparametrisation but the invertibility axiom would still be lacking. If we were only working with flat connections then it would suffice to quotient by general 2-paths. 
\end{rem}

\begin{defi}The objects of the \textbf{path groupoid} $\p_1(M)$ are simply the points of the underlying manifold $x\in M$. A morphism from $x$ to $y$ is a thin 2-path class of 1-paths, $[p]$. The composition law is given by the thin 2-path class of the concatenation of representatives, i.e., for $x\xrightarrow{p}y\xrightarrow{q}z$ we define $x\xrightarrow{qp}z$ as follows:
\begin{equation}
(qp)(r):=\begin{cases}
      p(2r)\,, & 0\leq r\leq\frac{1}{2}\\
      q(2r-1)\,, & \frac{1}{2}\leq r\leq1
    \end{cases}\quad.
\end{equation}
The identity on $x$ is given by the thin 2-path class of the constant loop at $x$, $\id_x:=[1_x]$. The inverse of a morphism is given by the thin 2-path class of the \textbf{opposite 1-path}, $[p]^{-1}:=[p\circ\iota]$.
\end{defi}
See \cite[Theorem 1]{Baez} for the following characterisation.
\begin{theo}\label{theo:hol as smooth functor}
Given a manifold $M$ and Lie group $G$ with Lie algebra $\g:=T_eG$, there is a one-to-one correspondence between the following:
\begin{enumerate}
\item A connection on a trivial principal $G$-bundle over $M$.
\item A globally defined $\g$-valued 1-form.
\item\label{item:hol} A smooth functor $W:\p_1(M)\rightarrow\mathrm{B}G$.
\end{enumerate}
\end{theo}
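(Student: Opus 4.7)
The plan is to establish the equivalences in a cycle: (1)$\Leftrightarrow$(2) by standard principal bundle theory, (2)$\Rightarrow$(3) by parallel transport, and (3)$\Rightarrow$(2) by differentiation along short paths.

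For (1)$\Leftrightarrow$(2), I would recall that a principal connection on $P\to M$ is a $\g$-valued 1-form $\omega$ on $P$ that is $\mathrm{Ad}$-equivariant under the right $G$-action and restricts to the Maurer--Cartan form on vertical vectors. When $P=M\times G$ is trivial, the global unit section $s:M\to P$, $s(x)=(x,e)$, induces a bijection $\omega\mapsto s^{*}\omega$ onto globally defined $\g$-valued 1-forms on $M$, with inverse extending $\A$ uniquely via the equivariance conditions.

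For (2)$\Rightarrow$(3), given $\A$ I would define $W$ on the class of a piecewise smooth 1-path $x\xrightarrow{p}y$ by $W([p]):=g(1)$, where $g:[0,1]\to G$ is the unique piecewise smooth solution of the parallel-transport ODE $\dot g(r)\,g(r)^{-1}=-\A_{p(r)}\bigl(\dot p(r)\bigr)$ with $g(0)=e$, patched across the partition on which $p$ is smooth. Functoriality under concatenation, $W([qp])=W([q])W([p])$, and $W([1_x])=e$ follow at once from ODE uniqueness. To see that $W$ descends to thin homotopy classes, I would apply the non-Abelian Stokes theorem to a thin 2-path $\P:p\Rightarrow p'$: the discrepancy between $W([p])$ and $W([p'])$ is expressed by a surface-ordered integral of $\P^{*}\F_\A$, and the thinness condition $\mathrm{rank}(\dd\P)<2$ forces $\P^{*}\alpha=0$ for every 2-form $\alpha$ on $M$, whence $W([p])=W([p'])$. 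Smoothness of $W$ then follows from smooth dependence of ODE solutions on parameters. Conversely, for (3)$\Rightarrow$(2), given a smooth functor $W:\p_1(M)\to\mathrm{B}G$ I would reconstruct $\A$ by differentiation: for $x\in M$ and $v\in T_xM$, choose a smooth 1-path $p$ with $p(0)=x$ and $\dot p(0)=v$, set $p_r:=p|_{[0,r]}$ reparametrised to $[0,1]$, and define $\A_x(v):=-\tfrac{\dd}{\dd r}\big|_{r=0}W([p_r])$. Smoothness of $W$ ensures the derivative exists, thin-homotopy invariance gives independence of the representative $p$, and functoriality yields linearity in $v$. That this construction inverts the previous one follows from uniqueness of solutions of the parallel-transport ODE.

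The main obstacle is a careful formulation of what \emph{smooth} should mean for the functor $W$ on the path groupoid, and the verification that thin 2-paths (rather than arbitrary homotopies) are precisely what is needed for the holonomy of a not-necessarily-flat connection to descend --- since non-flat connections have nontrivial holonomy around contractible loops, quotienting by general 2-paths would over-collapse. The standard remedy is to equip $\p_1(M)$ with a plot-based diffeological structure under which smooth families of 1-paths produce smooth families of holonomies, as in Schreiber--Waldorf \cite{SW1, SW}, and to invoke the non-Abelian Stokes theorem carefully over the finitely many piecewise smooth cells of $\P$. The complete argument along these lines is \cite[Theorem 1]{Baez}, whose presentation I would follow.
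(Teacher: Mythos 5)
Your proposal is correct and follows essentially the same route as the paper, which itself does not spell out a proof but defers to \cite[Theorem 1]{Baez} (and, behind it, Schreiber--Waldorf), recording only the functoriality/smoothness axioms and the path-ordered exponential in Remark \ref{rem:hol axioms} and Construction \ref{con:pathexp}. The only differences are cosmetic: your sign convention in the parallel-transport ODE is opposite to the paper's $\frac{\dd}{\dd r}W^p_{r0}=\A\big[\frac{\dd p}{\dd r}\big]W^p_{r0}$, and the linearity of the reconstructed 1-form in $v$ deserves slightly more care than ``functoriality yields linearity'', but both points are settled in the cited references you invoke.
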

\begin{rem}\label{rem:hol axioms}
Item \ref{item:hol} of Theorem \ref{theo:hol as smooth functor} is called \textbf{parallel transport}. The map on morphisms $[p]$ is denoted by $W^p\in G$ and functoriality implies that:
\begin{subequations}
\begin{alignat}{2}
W^{qp}&=W^qW^p\quad&&,\\
W^{1_x}&=e_G\quad&&.
\end{alignat}
\end{subequations}
Smoothness of $W$ means the following: given a smooth function $f:[0,1]^d\times[0,1]\to M$ such that $\forall u\in[0,1]^d$ the following is a 1-path $p_u:=f(u,\cdot):[0,1]\to M$, the induced function 
\begin{subequations}
\begin{alignat}{2}
[0,1]^d&\to G\quad&&,\\
u&\mapsto W^{p_u}\quad&&,
\end{alignat}
\end{subequations}
is itself smooth.
\end{rem}
\begin{constr}\label{con:pathexp}
Exploiting the one-to-one correspondence between connections on trivial principal $G$-bundles over $M$ and globally defined $\g$-valued 1-forms $\A\in\Omega^1(M;\g)$, parallel transport is given by the \textbf{path-ordered exponential},
\begin{equation}\label{eq:path-ordered exponential}
W^p=\mathrm{P}\exp\left(\int_p\A\right)\quad.
\end{equation}
Specifically, this arises by considering the parallel transport equation in $G$, 
\begin{equation}
\frac{\dd }{\dd r}W^p_{r0}=\A\left[\frac{\dd p}{\dd r}\right]W^p_{r0}
\end{equation}
with initial condition $W^p_{00}=e_G$. We then set $W^p:=W_{10}^p$. Throughout this paper we will use formal connections and general linear Lie algebras as in Construction \ref{con:formal linear ODEs in general} hence the path-ordered exponential becomes the iterated integral series \eqref{eq:formal ODE solution as iterated integrals}. See, for example, \cite[Proposition 2]{BRW} for a proof that such iterated integral series indeed satisfy the smooth functoriality requirements of parallel transport.
\end{constr}

\subsubsection*{The abstraction up towards 2-holonomy}
We have already described the vertical concatenation of 2-paths in \eqref{eqn:vert concat} and now we also require their horizontal concatenation if we wish to describe the 2-path 2-groupoid $\p_2(M)$. We define the \textbf{horizontal concatenation},
\begin{equation}
\begin{tikzcd}
	{x} && {y} && {z}
	\arrow[""{name=0, anchor=center, inner sep=0}, "p", curve={height=-32pt}, from=1-1, to=1-3]
	\arrow[""{name=1, anchor=center, inner sep=0}, "p'"', curve={height=32pt}, from=1-1, to=1-3]
	\arrow[""{name=2, anchor=center, inner sep=0}, "{q}", curve={height=-32pt}, from=1-3, to=1-5]
	\arrow[""{name=3, anchor=center, inner sep=0}, "{q'}"', curve={height=32pt}, from=1-3, to=1-5]
	\arrow["\P\,"', shorten <=6pt, shorten >=6pt, Rightarrow, from=0, to=1]
	\arrow["{\Q\,}"', shorten <=6pt, shorten >=6pt, Rightarrow, from=2, to=3]
\end{tikzcd}
~~\longmapsto~~
\begin{tikzcd}
	{x} && {z}
	\arrow[""{name=0, anchor=center, inner sep=0}, "qp", curve={height=-32pt}, from=1-1, to=1-3]
	\arrow[""{name=1, anchor=center, inner sep=0}, "q'p'"', curve={height=32pt}, from=1-1, to=1-3]
	\arrow["\Q\P\,"', shorten <=6pt, shorten >=6pt, Rightarrow, from=0, to=1]
\end{tikzcd}\quad,
\end{equation}
as 
\begin{equation}\label{eqn:hor concat}
(\Q\P)(s,r):=\begin{cases}
     \P(s,2r)\,, & 0\leq r\leq\frac{1}{2}\\
      \Q(s,2r-1)\,, & \frac{1}{2}\leq r\leq1
    \end{cases}\quad.
\end{equation}
\begin{rem}
Again, this horizontal concatenation is well-defined because of the piecewise smoothness condition of 2-paths. However, as before we need to work with an appropriate equivalence relation on 2-paths that will guarantee the associativity, unitality and invertibility of \textit{both} concatenations as well as respect the thin 2-path equivalence on source/target 1-paths and, furthermore, enforce functoriality of horizontal composition (all this without restricting to only working with 2-flat fake flat 2-connections). The details of such ``rank-2 homotopies" are contained in \cite[Definition 1.19]{On2dhol} but note that what Faria Martins and Picken call a 2-path is the same as what Schreiber and Waldorf call a bigon. The proof that one eventually gets a 2-groupoid is contained in \cite[Appendix: Technical Lemmas]{On2dhol}.
\end{rem}
\begin{defi}
Given a Lie crossed module $\mathcal{X}=\big(H\xrightarrow{\mathcal{X}} G\,,\,G\xrightarrow{\rhd}\mathrm{Aut}(H)\big)$ and its associated differential crossed module $\chi=\big(\h\xrightarrow{\chi}\g\,,\,\g\xrightarrow{\rhd}\T_\h\big)$, a \textbf{2-connection on a trivial principal $\mathcal{X}$-bundle over $M$} is a fake flat $\chi$-valued 2-connection on $M$ as in Definition \ref{def:2-connection}.
\end{defi}
Now we are ready for the axiomatic approach to the 2-holonomy of a 2-connection on a trivial principal $\mathcal{X}$-bundle over $M$ as in \cite[Theorem 3]{Baez}.
\begin{theo}\label{theo:2hol characterisation}
Given a manifold $M$ and Lie crossed module $\mathcal{X}$ with associated differential crossed module $\chi$, there is a one-to-one correspondence between the following:
\begin{enumerate}
\item A 2-connection on a trivial principal $\mathcal{X}$-bundle over $M$.
\item\label{item:2hol as smooth 2functor} A smooth 2-functor $W:\p_2(M)\rightarrow\mathrm{B}\mathcal{X}$.
\end{enumerate}
\end{theo}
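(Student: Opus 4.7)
The plan is to mirror the strategy of Theorem \ref{theo:hol as smooth functor} dimension by dimension, with the fake flatness condition \eqref{eqn:fake flat} emerging precisely as what enforces the target constraint $\mathcal{X}(W^\P)=W^p(W^{p'})^{-1}$ built into item (ii) of Construction \ref{con:translation between 2-group and crossmod}. Forward direction: given a fake flat $\chi$-valued 2-connection $(\A,\B)$, set $W$ to be trivial on objects, on 1-morphisms $[p]$ assign the path-ordered exponential $W^p\in G$ from Construction \ref{con:pathexp}, and on a 2-morphism $p\xRightarrow{\P}p'$ define $W^\P:=W^\P(1)\in H$ as the terminal value of the solution to the $H$-valued ODE
$$\frac{\dd}{\dd s}W^\P(s)\,=\,-\biggl(\int_0^1 W^{\P^s}_r\,\rhd\,\B\bigl[\partial_s\P(s,r),\partial_r\P(s,r)\bigr]\,\dd r\biggr)\,W^\P(s)$$
with $W^\P(0)=e_H$, where $W^{\P^s}_r\in G$ denotes the 1-dimensional parallel transport along the cross-sectional 1-path $\P^s$ from parameter $0$ up to parameter $r$.

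The critical verification is that this $W^\P$ lands in the correct fibre, i.e.\ $\mathcal{X}(W^\P)=W^p(W^{p'})^{-1}$. Applying $\mathcal{X}$ to the defining ODE and invoking the equivariance \eqref{eq:conjugation equivariance} converts each $\rhd$-whiskering into conjugation by the $G$-element $W^{\P^s}_r$, and the resulting $G$-valued equation coincides with the one satisfied by $W^p\,(W^{\P^s})^{-1}$ exactly when $\F_\A=\chi(\B)$; this is where fake flatness enters essentially. Vertical concatenation \eqref{eqn:vert concat} is respected because splitting the $s$-integration interval immediately yields $W^{\P'\cdot\P}=W^{\P'}W^\P$, matching item (iii). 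Horizontal concatenation \eqref{eqn:hor concat} requires re-whiskering the right half by the 1-transport across the left half, which reproduces the formula $h'(g'''\rhd h)$ of item (iv). Invariance under thin rank-2 homotopy is automatic since the integrand $\B[\partial_s\P,\partial_r\P]$ vanishes wherever $\dd\P$ has rank less than $2$, and smoothness of the assignment $\P\mapsto W^\P$ propagates through smooth dependence of the ODE on its parameters.

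Conversely, given a smooth 2-functor $W\colon\p_2(M)\to\mathrm{B}\mathcal{X}$, restriction to 1-morphisms yields a smooth 1-functor $\p_1(M)\to\mathrm{B}G$ whose associated $\g$-valued 1-form $\A$ is delivered by Theorem \ref{theo:hol as smooth functor}. To recover $\B$, evaluate $W$ on infinitesimal coordinate rectangles: for $x\in M$ and $u,v\in T_xM$, define $\B_x(u,v)\in\h$ as the leading coefficient in $\epsilon\delta$ of $W^{\P_{\epsilon,\delta}}$ where $\P_{\epsilon,\delta}$ is the rectangle based at $x$ spanned by $\epsilon u$ and $\delta v$; smoothness of $W$ ensures this prescription delivers a smooth $\h$-valued 2-form, and expanding the target constraint $\mathcal{X}(W^\P)=W^p(W^{p'})^{-1}$ to second order in $\epsilon,\delta$ forces exactly \eqref{eqn:fake flat}. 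The hard part is horizontal functoriality: the surface-ordered integral does not split cleanly under horizontal concatenation without the $\rhd$-whiskering prescription, and verifying that the whiskered formula agrees with the horizontal composition law of Construction \ref{con:translation between 2-group and crossmod} requires careful bookkeeping of which 1-dimensional parallel transport conjugates which piece of $\B$, with the Peiffer identity \eqref{eq:crossmod Peiffer} supplying the needed coherence between whiskering and group-theoretic conjugation.
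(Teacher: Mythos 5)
Your strategy coincides with the one the paper itself adopts: the paper does not prove this theorem but quotes it from Baez--Huerta and records only the surface-ordered exponential of Construction~\ref{con:surface-ordered exponential}, deferring the verification to Faria Martins--Picken. Your forward direction is exactly that construction (your ODE is the $r=1$ specialisation of the paper's, with the sign absorbed into the antisymmetry of $\B$), with the correct mechanism for globularity: apply $\mathcal{X}$, use equivariance \eqref{eq:conjugation equivariance} to turn $\rhd$ into conjugation, and compare with the $\F_\A$-formula \eqref{eq:Ambrose-Singer theorem} for $\tfrac{\dd}{\dd s}W^{\P^s}$, so that fake flatness $\F_\A=\chi(\B)$ is precisely what is needed. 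Your converse (recover $\A$ from Theorem~\ref{theo:hol as smooth functor}, recover $\B$ from the second-order expansion of $W$ on small coordinate rectangles, and read off \eqref{eqn:fake flat} from the target constraint) is the standard Schreiber--Waldorf/Baez--Huerta argument and is fine as a sketch.

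Two cautions. First, the step you call ``automatic'' is the one place with a real gap: the pointwise vanishing of $\B[\partial_s\P,\partial_r\P]$ on rank-deficient 2-paths shows that \emph{thin} 2-paths receive trivial surface holonomy, but well-definedness of $W$ on the 2-cells of $\p_2(M)$ requires invariance under \emph{rank-2 homotopies} of 2-paths (three-parameter families, \cite[Definition 1.19]{On2dhol}); that is a genuinely three-dimensional Stokes-type computation in which fake flatness and the Peiffer identity enter again, and it constitutes the bulk of the proof in \cite[Subsubsection 2.4.5]{On2dhol} --- it does not follow from the thinness observation alone. Second, watch the ordering conventions: in this paper vertical composition in $\mathrm{B}\mathcal{X}$ is the \emph{opposite} product (Construction~\ref{con:translation between 2-group and crossmod}), so Remark~\ref{rem:2hol axioms} demands $W^{\P'\cdot\P}=W^{\P}W^{\P'}$ as an $H$-product; with your coefficient-on-the-left ODE the interval-splitting argument produces the factors in the other order, so you must either check that your sign/argument conventions for $\B$ and for $\mathcal{X}(h)=g(g')^{-1}$ compensate, or adjust the ODE, before the claimed preservation of vertical composition is actually the paper's axiom.
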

\begin{rem}\label{rem:2hol axioms}
Item \ref{item:2hol as smooth 2functor} of Theorem \ref{theo:2hol characterisation} is called \textbf{2-holonomy} and the map on 2-morphisms $[p]\xRightarrow{[\P]}[p']$ is denoted by 
\begin{equation}
W^\P:W^p\Rightarrow W^{p'}\quad.
\end{equation}
In addition to Remark \ref{rem:hol axioms}, we also have the 2-functoriality implications and for these we need our dictionary between a crossed module $\mathcal{X}$ and strict 2-group B$\mathcal{X}$ as in Construction \ref{con:translation between 2-group and crossmod}. First off, $W^\P\in H$ is such that $\mathcal{X}(W^\P)=W^p\big(W^{p'}\big)^{-1}$. For vertical concatenation of 2-paths as in \eqref{eqn:vert concat}, we require:
\begin{alignat}{2}
W^{\P'\cdot\P}&=W^{\P'}\cdot W^\P=W^\P W^{\P'}\quad&&,\\
W^{1_p}&=e_H\quad&&.\label{eq:2hol preserves ver2units}
\end{alignat}
For horizontal concatenation of 2-paths as in \eqref{eqn:hor concat}, we require:
\begin{alignat}{2}
W^{\Q\P}&=W^\Q\left(W^{q'}\rhd W^\P\right)\quad&&,\\
W^{1_{1_x}}&=e_H\quad&&.\label{eq:2hol preserves hor2units}
\end{alignat}
Actually, preserving horizontal units \eqref{eq:2hol preserves hor2units} is already achieved by preserving vertical units \eqref{eq:2hol preserves ver2units}. Smoothness means, in addition, that for a smoothly parametrised family of 2-paths $\P_s$ (where $s\in[0,1]^d$), the 2-morphism $W^{\P_s}$ depends smoothly on $s$.
\end{rem}
\begin{constr}\label{con:surface-ordered exponential}
Analogous to the path-ordered exponential of Construction \ref{con:pathexp}, there is a ``surface-ordered exponential". Given $[p]\xRightarrow{[\P]}[\delta]$, consider the differential equation in $H$, 
\begin{equation}
\frac{\partial}{\partial s}W^\P_{s0,r0}=\int_0^rW^{\P^s}_{r'0}\rhd\B\left[\frac{\partial\P^s}{\partial r'},\frac{\partial\P^s}{\partial s}\right]\dd r'W^\P_{s0,r0}
\end{equation}
with initial condition $W^\P_{00,r0}=e_H$. Once again, we consider the term $\B\left[\frac{\partial\P^s}{\partial r'},\frac{\partial\P^s}{\partial s}\right]$ as an element in $H$ through the exponential map in order for this differential equation to make sense. The \textbf{surface-ordered exponential} is the $(s=1,r=1)$-evaluation of the unique solution of this differential equation and the proof that it (together with the path-ordered exponential) indeed provides a 2-holonomy is given, for instance, in \cite[Subsubsection 2.4.5]{On2dhol}.
\end{constr}

\subsection{The general linear Lie/differential crossed module on any cochain 2-functor}\label{subsec:Lie/diff cross mod on 2-functor}
We now restrict attention to categories which are enriched over $\Ch_{R,\mathrm{fd}}^{[-1,0]}$, i.e., the category of finite-dimensional cochain complexes over $R$ concentrated in degrees $[-1,0]$. For this assumption, have in mind the homotopy $2$-category $\mathsf{Ho}_2\big({}_A\dgMod_R^{\mathrm{fgsf}}\big)$ of the dg-category of finitely generated semi-free $R$-dg-modules over a \textit{strictly} positively\footnote{The reason for restricting to the case where $A$ has finitely many generators and all of strictly positive degree is because it is the simplest way to enforce the following fact. Given any two finitely generated semi-free $A$-dg-modules
$\U,\V\in {}_A\dgMod^{\mathrm{fgsf}}$ with 
$\U^\sharp \cong A^\sharp\otimes U^\sharp$ and $\V^\sharp\cong A^\sharp\otimes V^\sharp$, 
we have the following isomorphism in ${}_{A^\sharp}\mathbf{gMod}$,
\begin{equation}
\hom_A(\U,\V)^\sharp\,\cong\,A^\sharp\otimes\big(V^\sharp\otimes (U^\sharp)^\ast\big)\quad.
\end{equation}
Thus, the vector space of $A$-linear cochain maps is finite-dimensional as well as that of the $A$-linear cochain homotopies hence the, soon to be defined, general linear differential crossed module 
\begin{equation}
\gl(\V):=\big(\hom_A(\V,\V)^{-1}\xrightarrow{\partial}\hom_A(\V,\V)^0,\rhd\big)\quad,
\end{equation}
is finite-dimensional.}, finitely generated semi-free $R$-CDGA $A$.
\subsubsection{A strict 2-group of pseudonatural automorphisms and automodifications}
\begin{rem}\label{rem:Lie 2-group of pseudoautos and automods}
We already know that $\dgCat_R^{[-1,0],\ps}[\BB,\CC]$ is a (2,1)-category;
\begin{enumerate}
\item We know $\dgCat_R^{[-1,0],\ps}[\BB,\CC](F,G)$ is a groupoid, see the lateral composition of modifications as defined in Construction \ref{con:lat comp mods}.
\item The vertical composition of pseudonatural transformations in Construction \ref{con:vert comp pseudo} and that of modifications in Construction \ref{con:vertcomp mods}, is such that the vertical composition
\begin{equation*}
\circ:\dgCat_R^{[-1,0],\ps}[\BB,\CC](G,H)\times\dgCat_R^{[-1,0],\ps}[\BB,\CC](F,G)\rightarrow\dgCat_R^{[-1,0],\ps}[\BB,\CC](F,H)
\end{equation*}
is a strictly associative functor and admits the functorial unit
\begin{equation}
\Id_F:\mathbf{1}\rightarrow\dgCat_R^{[-1,0],\ps}[\BB,\CC](F,F)\qquad\quad,\quad\qquad\bullet\longmapsto\Id_F\quad,\quad1_\bullet\longmapsto\ID_{\Id_F}\,\,.
\end{equation} 
\end{enumerate}
Given a $\Ch_R^{[-1,0]}$-functor $F:\BB\to\CC$, we recover a single object (2,1)-category $\mathbf{End}_F$ by restricting to pseudonatural endomorphisms $\xi\in\PN_F$ and endomodifications $\Xi\in\Mod_F$. In order to recover a strict 2-group $\mathbf{Aut}_F$, Lemma \ref{lem:hor 2-inv from ver 2-inv and 1-inv} tells us to restrict to pseudonatural automorphisms $\xi\in\PN_F^\times$ and automodifications $\Xi\in\Mod_F^\times$.
\end{rem}

\subsubsection*{The corresponding Lie crossed module}
Let us use Construction \ref{con:translation between 2-group and crossmod} to translate the strict 2-group $\mathbf{Aut}_F$ into the \textbf{crossed module of pseudonatural automorphisms $\xi:F\Rightarrow F$ and automodifications-to-1} $\Xi:\xi\Rrightarrow\Id_F$,
\begin{equation}
\mathrm{GL}_F:=\left(\big(\Mod^{\times\Rrightarrow1}_{F}\,,\circ\big)\xrightarrow{\mathrm{dom}}\left(\PN_{F}^\times\,,\circ\right)\,,\rhd\right)\quad.
\end{equation}
\begin{constr}\label{con:Lie cross mod of autopseudos and nice mods}
\eqref{eq:nice vercomp mods} gives the product of $\Xi:\xi\Rrightarrow\Id_F$ and $\Theta:\theta\Rrightarrow\Id_F$ as
\begin{equation}
\Theta\Xi:=\Xi\cdot\Theta\xi:\theta \xi\Rrightarrow\Id_F\qquad,\qquad(\Theta\Xi)_U=\Theta_U\xi_U+\Xi_U\quad.
\end{equation}
$\ID_{\Id_F}:\Id_F\Rrightarrow\Id_F$ is the unit for this product law. The inverse of $\Xi:\xi\Rrightarrow\Id_F$ is given by
\begin{equation}
\Xi^{-1}:=\xi^{-1}\overleftarrow{\Xi}:\xi^{-1}\Rrightarrow\Id_F\qquad,\qquad(\Xi^{-1})_U:=\big(\xi^{-1}\overleftarrow{\Xi}\big)_U=-\xi_U^{-1}\Xi_U\quad,
\end{equation}
as in \eqref{eq:inverse mod}. The coboundary $\mathrm{dom}:\big(\Mod^{\times\Rrightarrow1}_{F}\,,\circ\big)\rightarrow\left(\PN_{F}^\times\,,\circ \right)$ is given simply by 
\begin{equation}
\mathrm{dom}\left(\Xi:\xi\Rrightarrow\Id_F\right)\,=\,\xi\quad.
\end{equation}
The action map $\rhd:\PN_{F}^\times\rightarrow\mathrm{Aut}\big(\Mod^{\times\Rrightarrow1}_{F}\big)$ is given by \textbf{whiskered-conjugation}, i.e.,
\begin{equation}\label{eq:whiskered-conjugation}
\theta\rhd\Xi:=\theta\Xi\theta^{-1}:\theta\xi\theta^{-1}\Rrightarrow\Id_F\qquad,\qquad(\theta\rhd\Xi)_U:=\big(\theta\Xi\theta^{-1}\big)_U=\theta_U\Xi_U\theta_U^{-1}\,.
\end{equation}
\end{constr}

\begin{rem}\label{rem:Lie crossed mod is matrix}
Working with $\Ch_{R,\mathrm{fd}}^{[-1,0]}$-categories means that every hom 2-term complex is finite-dimensional. Taking this fact together with: Definition \ref{def:pseudonatural} of pseudonatural transformations, Definition \ref{defi: modification} of modifications and their $R$-linearity in Construction \ref{con:add mods}, we see that the space of pseudonatural endomorphisms $\PN_F$ is objectwise finite-dimensional and the same holds for the space of endomodifications $\Mod_F$. 

Looking at the characterisation of pseudonatural automorphisms in Remark \ref{rem:pseudonatural endo is auto iff}, it is clear that the group of pseudonatural automorphisms $\PN_F^\times$ is objectwise open in these local linear spaces hence $\PN_F^\times$ is indeed an objectwise finite-dimensional complex matrix Lie group.  

Given an endomodification $(\Theta:\theta\Rrightarrow\theta':F\Rightarrow F)\in\Mod_F$, we define
\begin{equation}\label{eq:crossmod widetilde mod}
\widetilde{\Theta}:=\Theta+\ID_{-\theta'+\Id_F}:\theta-\theta'+\Id_F\Rrightarrow\Id_F\quad.
\end{equation}
The map $\widetilde{(\cdot)}:\Mod_F\rightarrow\Mod_F^{\Rrightarrow1}$ establishes a one-to-one correspondence though it does not preserve the $R$-linear structure, i.e., $\widetilde{\Theta+\Xi}\neq\widetilde{\Theta}+\widetilde{\Xi}$ in general. The map $\mathrm{dom}:\Mod^{\Rrightarrow1}_F\rightarrow\PN_F$ is clearly continuous (actually, smooth) hence the preimage $\mathrm{dom}^{-1}\left(\PN_F^\times\right)=\Mod_F^{\times\Rrightarrow1}$ is open in $\Mod^{\Rrightarrow1}_F$. The map $\xi\mapsto\ID_\xi$ and the adjoint representation (of a Lie group acting on itself) are smooth hence the action map $\rhd$ is also smooth given that it is whiskered-conjugation \eqref{eq:whiskered-conjugation}. 
\end{rem}

\subsubsection*{The underlying differential crossed module}
Remark \ref{rem:Lie crossed mod is matrix} has assured us that the crossed module in Construction \ref{con:Lie cross mod of autopseudos and nice mods} is indeed a \textit{Lie} crossed module and, better yet, the Lie groups appearing are finite-dimensional complex \textit{matrix} Lie groups hence the passage to the corresponding differential crossed module is as follows:
\begin{enumerate}
\item[(i)] The Lie algebra of $\left(\PN_F^\times\,,\circ\right)$ is $\PN_F$ equipped with the commutator bracket of $\circ$.
\item[(ii)] The Lie algebra of $\big(\Mod_F^{\times\Rrightarrow1},\circ\big)$ is $\Mod_F^{\Rrightarrow0}$ equipped with the commutator bracket of $\circ$.
\item[(iii)] Differentiating $\Mod^{\times\Rrightarrow1}_F\xrightarrow{\mathrm{dom}}\PN^\times_F$ gives the coboundary $\Mod^{\Rrightarrow0}_F\xrightarrow{\partial}\PN_F$ from \eqref{eq:coboundary pseudonat}.
\item[(iv)] The derivative of the action by whiskered-conjugation is whiskered-commutation.
\end{enumerate} 
To have full confidence that this is indeed a differential crossed module, we show explicitly in the below construction that the above structures satisfy the axioms of Definition \ref{def:differential crossed module}.
\begin{constr}\label{con:diff cross mod of pseudos}
The \textbf{differential crossed module of pseudonatural endomorphisms $\xi:F\Rightarrow F$ and endomodifications-to-0} $\Xi:\xi\Rrightarrow0$ is defined as
\begin{equation}
\gl_F:=\left(\left(\Mod^{\Rrightarrow0}_{F}\,,\,[\cdot,\cdot]\right)\xrightarrow{\partial}\left(\PN_{F},[\cdot,\cdot]\right)\,,\,\rhd\right)\quad.
\end{equation}
Given pseudonatural endomorphisms $\xi,\theta\in\PN_F\,,$ the bracket given by 
\begin{equation}\label{eqn:psnat Lie bracket}
[\theta,\xi]:\equiv\theta\xi-\xi\theta
\end{equation}
is evidently a Lie bracket because it is the commutator of the associative bilinear product $\circ$. Given endomodifications-to-0, $\Xi:\xi\Rrightarrow0$ and $\Theta:\theta\Rrightarrow0$, the bracket given by 
\begin{equation}\label{eqn:mods Lie bracket}
[\Theta,\Xi]:=\Theta\Xi-\Xi\Theta:\theta\xi-\xi\theta\Rrightarrow0
\end{equation}
is also clearly a Lie bracket and the coboundary map \eqref{eq:coboundary pseudonat} given by
\begin{equation}\label{eqn:p_n differential}
\partial\Xi\equiv\xi
\end{equation}
is a Lie algebra homomorphism,
\begin{equation}
\partial([\Theta,\Xi])=\theta\xi-\xi\theta=[\theta,\xi]=[\partial\Theta,\partial\Xi]\quad.
\end{equation}
The map 
\begin{equation}
\theta\rhd\Xi:=[\theta,\Xi]:=\theta\Xi-\Xi\theta
\end{equation}
is, by the Jacobi identity, a Lie derivation of the bracket on $\Mod^{\Rrightarrow0}_F$,
\begin{equation}
\upsilon\rhd[\Theta,\Xi]=[\upsilon,[\Theta,\Xi]]=[[\upsilon,\Theta],\Xi]+[\Theta,[\upsilon,\Xi]]=[\upsilon\rhd\Theta,\Xi]+[\Theta,\upsilon\rhd\Xi]\quad,
\end{equation}
such that the induced linear map $\rhd:\mathrm{PsNats}_F\rightarrow\T_{\Mod^{\Rrightarrow0}_F}$ is a Lie algebra homomorphism,
\begin{equation}
[\upsilon,\theta]\rhd\Xi=[[\upsilon,\theta],\Xi]=[\upsilon,[\theta,\Xi]]-[\theta,[\upsilon,\Xi]]=\upsilon\rhd(\theta\rhd\Xi)-\theta\rhd(\upsilon\rhd\Xi)\quad.
\end{equation}
The equivariance property \eqref{eq:derivation equivariance} clearly holds, i.e., we have
\begin{equation}\label{eq:equivariance of action for gl_F}
\partial(\theta\rhd\Xi)=\partial\left(\theta\Xi-\Xi\theta\right)=\theta\xi-\xi\theta=[\theta,\xi]=[\theta,\partial\Xi]\quad.
\end{equation}
Finally, the check that the Peiffer identity \eqref{eq:diffcrossmod Peiffer} is upheld requires making use of \textit{both} formulae for the vertical composition of modifications \eqref{eq:nice vercomp mods} in Construction \ref{con:vertcomp mods},
\begin{align}\label{eq:Peiffer identity holds for gl_F}
[\Theta,\Xi]=\Theta\Xi-\Xi\Theta=\theta\Xi-\Xi\theta=\theta\rhd\Xi=(\partial\Theta)\rhd\Xi\quad.
\end{align}
\end{constr}
\begin{ex}\label{ex:CMs diff cross mod is a special case}
The $\Ch_R^{[-1,0]}$-category $\RR$ has: a single-object $\star$, morphisms $\RR(\star,\star):=R$ and composition given by multiplication of algebra elements. Consider also the $\Ch_R^{[-1,0]}$-category $\mathsf{Ho}_2(\Ch_R)$ of cochain complexes over $R$, cochain maps and homotopies modulo 2-homotopies. We denote by $\eta_\V:\RR\to\mathsf{Ho}_2(\Ch_R)$ the $\Ch_R^{[-1,0]}$-functor which merely picks out the cochain complex $\V$. Definition \ref{def:pseudonatural} of pseudonatural transformations tells us that a pseudonatural endomorphism $\xi:\eta_\V\Rightarrow\eta_\V$ is merely given by a cochain endomorphism $\xi_\star:\V\to\V$ whereas Definition \ref{defi: modification} of modifications tells us that an endomodification-to-zero $\Xi:\xi\Rrightarrow0$ is merely given by an endohomotopy-to-zero $\Xi_\star:\xi_\star\Rightarrow0$. In other words, $\gl_{\eta_\V}$ is simply the differential crossed module $\gl(\V)$ of cochain endomorphisms $f:\V\to\V$ and homotopies (modulo 2-homotopies) from them to 0, i.e., $h:f\Rightarrow0:\V\to\V$. See \cite[Subsection 2.2]{Joao2} for an explicit description of $\gl(\V)$.
\end{ex}

\begin{rem}\label{rem:gl_F valued 2-connection is Chern-like}
A $\gl_F$-valued 2-connection $(\A,\B)$ over $M$ consists of a $\PN_F$-valued 1-form $\A$ on $M$ and a $\Mod_F^{\Rrightarrow0}$-valued 2-form $\B$ on $M$. Tautologically, we use \eqref{eqn:p_n differential} to write
\begin{equation}
\B:\partial(\B)\Rrightarrow0
\end{equation}
hence $(\A,\B)$ is fake flat \eqref{eqn:fake flat} if and only if\footnote{In this case, the domain pseudonatural endomorphism of the 2-curvature 3-form $\G_{(\A,\B)}$ becomes
\begin{equation}
\G_{(\A,\B)}:=\dd \B+\A\wedge^\rhd\B:\dd\F_\A+\A\wedge^{[\cdot,\cdot]}\F_\A\Rrightarrow0
\end{equation}
and we can use the definition of the curvature 2-form in \eqref{eqn:fake flat} to write
\begin{equation}
\dd\F_\A+\A\wedge^{[\cdot,\cdot]}\F_\A=(\dd\A)\wedge^{[\cdot,\cdot]}\A+\A\wedge^{[\cdot,\cdot]}(\A\wedge^{[\cdot,\cdot]}\A)
\end{equation}
which is reminiscent of the Chern-Simons 3-form. If 2-flatness also holds then $\G_{(\A,\B)}=\mathbf{0}$ hence
\begin{equation}
(\dd\A)\wedge^{[\cdot,\cdot]}\A+\A\wedge^{[\cdot,\cdot]}(\A\wedge^{[\cdot,\cdot]}\A)=0\quad.
\end{equation}}
\begin{equation}\label{eq:mathcal B is mod from curv of A to 0}
\B:\F_\A\Rrightarrow0\quad.
\end{equation}
\end{rem}
\subsubsection{Cirio-Martins' 2-holonomy for a general linear crossed module} 
We need not go through the effort to translate the Lie crossed module $\mathrm{GL}_F$ into a strict Lie 2-group using Construction \ref{con:translation between 2-group and crossmod} given that it will result in $\mathbf{Aut}_F$. Within this present context, we now interpret the one-to-one correspondence of Theorem \ref{theo:2hol characterisation} as a definition of Item \ref{item:2hol as smooth 2functor}.
\begin{defi}\label{def:Aut_F-valued 2-holonomy over M}
Given a $\Ch_R^{[-1,0]}$-functor $F:\BB\rightarrow\CC$ and a $\gl_F$-valued fake flat 2-connection $(\A,\B)$ on a manifold $M$, a \textbf{2-holonomy} $W$ over $M$ is given by the following two pieces of data:
\begin{enumerate}
\item[(i)] For every 1-path $x\xrightarrow{p}y$ in $M$, a pseudonatural automorphism $W^p:F\Rightarrow F$.
\item[(ii)] For every 2-path $p\xRightarrow{\P}p'$ in $M$, an automodification  $W^\P$.
\end{enumerate}
These two pieces of data are required to satisfy the following three axioms which encapsulate the fact that $W:\p_2(M)\rightarrow\mathbf{Aut}_F$ is well-defined as a smooth 2-functor:
\begin{enumerate}
\item The map on 1-paths must be smoothly functorial and respect thin 2-path equivalence, i.e.:
\begin{itemize}
\item Given $x\xrightarrow{p}y\xrightarrow{q}z$, we must have $W^{qp}=W^qW^p$ and we must also have $W^{1_x}=\Id_F$.
\item If $[p]=[p']$ then $W^p=W^{p'}$.
\item A smoothly parametrised family of 1-paths induces a smooth map into $\PN_F^\times$.
\end{itemize}
\item \textbf{Globularity}, i.e., the automodifications must have the appropriate source/target,
\begin{equation}\label{eq:globularity condition}
W^\P:W^p\Rrightarrow W^{p'}\quad.
\end{equation}
\item The map on 2-paths must be smoothly 2-functorial and respect rank-2 homotopy equivalence:
\begin{itemize}
\item The preservation of vertical composition requires, for $p\xRightarrow{\P}p'\xRightarrow{\P'}p''$ : 
\begin{equation}\label{eq:preserv of vert comp 2-paths}
W^{\P'\cdot\P}=W^{\P'}\cdot W^\P\qquad,\qquad W^{1_p}=\mathbf{0}\quad.
\end{equation}
\item The preservation of horizontal composition requires, for
\begin{equation}
\begin{tikzcd}
	{x} && {y} && {z}
	\arrow[""{name=0, anchor=center, inner sep=0}, "p", curve={height=-32pt}, from=1-1, to=1-3]
	\arrow[""{name=1, anchor=center, inner sep=0}, "p'"', curve={height=32pt}, from=1-1, to=1-3]
	\arrow[""{name=2, anchor=center, inner sep=0}, "q", curve={height=-32pt}, from=1-3, to=1-5]
	\arrow[""{name=3, anchor=center, inner sep=0}, "q'"', curve={height=32pt}, from=1-3, to=1-5]
	\arrow["\P\,"', shorten <=6pt, shorten >=6pt, Rightarrow, from=0, to=1]
	\arrow["{\Q\,}"', shorten <=6pt, shorten >=6pt, Rightarrow, from=2, to=3]
\end{tikzcd}\quad,\quad W^{\Q\P}=W^\Q W^\P=W^\Q W^{p'}\cdot W^qW^\P\,.
\end{equation}
\item If $[\P]=[\P']$ then $W^\P=W^{\P'}$.
\item A smoothly parametrised family of 2-paths induces a smooth map into $\Mod_F^\times$.
\end{itemize}
\end{enumerate}
\end{defi}
Example \ref{ex:CMs diff cross mod is a special case} mentioned that CM's differential crossed module \cite[Subsection 2.2]{Joao2} is a special case of ours (i.e., $\gl(\V)=\gl_{\eta_\V}$) hence we now follow their construction\footnote{Note that throughout CM's publications compositions are written from left to right.} of a more explicit formula for the 2-holonomy (than that of Construction \ref{con:surface-ordered exponential}) in \cite[Theorem 8]{Joao2}. 
\begin{constr}\label{con:Aut_F valued 2-holonomy by CM}
Recall the notion of a cross-sectional 1-path $x\xrightarrow{\P^s}y$ of a 2-path $\P^0\xRightarrow{\P}\P^1$ from Definition \ref{def:2-path}. Using this notation we recall an important fact\footnote{See, for example, \cite[(1.7) of Lemma 1.2]{On2dhol} and \cite[(3.13) of Lemma 3.3]{FARIAMARTINS2011179}.},
\begin{equation}\label{eq:Ambrose-Singer theorem}
\frac{\dd}{\dd s}W^{\P^s}=W^{\P^s}\int_0^1\left(W^{\P^s}_{r0}\right)^{-1}\F_\A\left[\frac{\partial\P}{\partial r},\frac{\partial\P}{\partial s}\right]W^{\P^s}_{r0}\dd r=\int_0^1W^{\P^s}_{1r}\F_\A\left[\frac{\partial\P}{\partial r},\frac{\partial\P}{\partial s}\right]W^{\P^s}_{r0}\dd r\,.
\end{equation}
We emphasise that the differential crossed module being $\gl_F$ and the 2-connection $(\A,\B)$ being fake flat allows us to use \eqref{eq:mathcal B is mod from curv of A to 0} in Remark \ref{rem:gl_F valued 2-connection is Chern-like} to write
\begin{equation}
\int_0^1\int_0^1W^{\P^s}_{1r}\B\left[\frac{\partial\P}{\partial s},\frac{\partial\P}{\partial r}\right]W^{\P^s}_{r0}\dd r\dd s:\int_0^1\int_0^1W^{\P^s}_{1r}\F_\A\left[\frac{\partial\P}{\partial s},\frac{\partial\P}{\partial r}\right]W^{\P^s}_{r0}\dd r\dd s\Rrightarrow0\,.
\end{equation}
Looking at the globularity condition in Definition \ref{def:Aut_F-valued 2-holonomy over M}, we can see that we must take the assignment of an automodification $W^\P$ to a 2-path $\P$ as
\begin{equation}\label{eq:CM def of 2hol for gl_F}
\int_0^1\int_0^1W^{\P^s}_{1r}\B\left[\frac{\partial\P}{\partial s},\frac{\partial\P}{\partial r}\right]W^{\P^s}_{r0}\dd r\dd s+\ID_{W^{\P^1}}:\int_1^0\frac{\dd}{\dd s}W^{\P^s}\dd s+W^{\P^1}=W^{\P^0}\Rrightarrow W^{\P^1}.
\end{equation}
As CM show in \cite[Theorem 8]{Joao2}, this 2-holonomy indeed preserves the vertical and horizontal concatenation of 2-paths. Furthermore, this 2-holonomy is invariant under rank-2 homotopy equivalence of 2-paths and is smooth with respect to the 2-path argument. CM then go on to show in \cite[Theorem 11, Corollary 12]{Joao2} that if $(\A,\B)$ is 2-flat then the 2-holonomy is invariant under 2-homotopies thus the 2-holonomy along a contractible 2-loop is always zero. 
\end{constr}

%%%%%%%%%%%%%%%%%%%%%%%%%%%%%%%%%%%%%%%%%%%%%%%%%%%%%%%%%%%%

\subsection{The CMKZ 2-connection on complex configuration space}\label{subsec:KZ 2-connection}
Given some $n\in\bbN$ and an infinitesimally 2-braided symmetric strict monoidal $\Ch_R^{[-1,0]}$-category $(\CC,\otimes,\gamma,t)$, we consider the $\Ch_R^{[-1,0]}$-functor $\otimes^n:\CC^{\,\boxtimes(n+1)}\to\CC$ and take the base space manifold $M$ as the \textbf{configuration space of $n+1$ distinguishable particles on the complex line}, 
\begin{equation}
Y_{n+1}:=\{(z_1,\ldots,z_{n+1})\in\bbC^{n+1}|\mathrm{~if~}i\neq j\mathrm{~then~}z_i\neq z_j\}\quad.
\end{equation}
\begin{defi}\label{def:CMKZ 2-connection pre-formal}
The \textbf{Cirio-Martins-Knizhnik-Zamolodchikov 2-connection} is the $\gl_{\otimes^n}$-valued 2-connection $(\A_\KZ,\B_\CM)$ on $Y_{n+1}$ is given by:
\begin{subequations}\label{subeq:CMKZ 2-connection on Y_n+1}
\begin{align}
\A_\KZ&:\equiv\sum_{1\leq i<j\leq n+1}\omega_{ij}t_{ij}\quad,\label{eq:KZ 1-form on Y_n+1}\\
\B_\CM&:=2\sum_{1\leq i<j<k\leq n+1}\omega_{ij}\wedge\omega_{ki}\L_{ijk}+\omega_{jk}\wedge\omega_{ki}\R_{ijk}\quad,\label{eq:CM 2-form on Y_n+1}
\end{align}
\end{subequations}
where $\omega_{ij}:=\frac{\dd z_i-\dd z_j}{z_i-z_j}$.
\end{defi}
\begin{rem}\label{rem:2-connection is fucking great}
As CM show in \cite[Section 3]{Joao}: 
\begin{enumerate}
\item For a strict $t$, this $\gl_{\otimes^n}$-valued 2-connection is \textit{by construction} fake flat, i.e.,
\begin{equation}\label{eqn:fake flatness}
\F_{\A_\KZ}:\equiv\dd \A_\KZ+\A_\KZ\wedge^{[\cdot,\cdot]}\A_\KZ\equiv\A_\KZ\wedge^{[\cdot,\cdot]}\A_\KZ\equiv\partial\B_\CM\quad.
\end{equation}
In this case there exists a well-defined $\mathbf{Aut}_{\otimes^n}$-valued 2-holonomy over $Y_{n+1}$ whereby 1-paths are assigned pseudonatural automorphisms of $\otimes^n$ (provided by the path-ordered exponential) and 2-paths are assigned modifications between these pseudonatural automorphisms of $\otimes^n$ and such automodifications are explicitly given by \eqref{eq:CM def of 2hol for gl_F}.
\item\label{item:2-curv} The 2-curvature 3-form, 
\begin{equation}\label{eqn:2-curv 3-form}
\G_{(\A_\KZ,\B_\CM)}:=\dd \B_\CM+\A_\KZ\wedge^\rhd\B_\CM=\A_\KZ\wedge^\rhd\B_\CM\quad,
\end{equation}
vanishes if and only if the relations in \cite[Theorem 21 and Theorem 22]{Joao} hold\footnote{See also \cite[Theorem 10]{Joao1}.}. As \cite[Theorem 23]{Joao} says, a coherent strict $t$ provides a 2-flat fake flat 2-connection.
\end{enumerate}
\end{rem}
\subsubsection{Drinfeld-Kohno differential crossed modules}
Let us explicate the fundamental examples $n\in\{0,1,2\}$.
\begin{ex}[\textbf{$0^\mathrm{th}$ Drinfeld-Kohno differential crossed module}]
Consider the differential crossed submodule of $\gl_{\id_\CC}$ generated by the empty set. That is, consider multiples of $1:=\Id_{\id_\CC}$ and $\mathbf{0}:=\ID_{\Id_{\id_\CC}}$. In this instance, we do not consider any 2-connection on $Y_1=\bbC$.
\end{ex}
\begin{ex}[\textbf{$1^\mathrm{st}$ Drinfeld-Kohno differential crossed module}]
We consider the differential crossed submodule of $\gl_\otimes$ generated (in the associative unital sense) by $\{t_{12},t_{21}\}$ subject to the conditions of a symmetric $t$. In short, we consider the algebra generated by $t$ together with the units $1:=\Id_\otimes$ and $\mathbf{0}:=\ID_{\Id_\otimes}$. In this instance, we consider the flat connection
\begin{equation}
\A^{n=1}_\KZ=\omega_{12}t
\end{equation}
on $Y_2=\{(z_1,z_2)\in\bbC^2\,|\,z_1\neq z_2\}$.
\end{ex}
\begin{ex}[\textbf{$2^\mathrm{nd}$ Drinfeld-Kohno differential crossed module}]\label{ex: diff cross mod n=2}
We consider the differential crossed submodule of $\gl_{\otimes^2}$ generated (in the associative unital sense) by
\begin{equation}
\big\{t_{ij}\,,\,t_{(ij)k}\,,\,t_{i(jk)}\,,\,\L_{ijk}:[t_{ij}\,,\,t_{(ij)k}]\Rrightarrow0\,,\,\R_{ijk}:[t_{jk}\,,\,t_{i(jk)}]\Rrightarrow0\,|\,\{i,j,k\}=\{1,2,3\}\,\big\}
\end{equation}
subject to the conditions in Remark \ref{rem:index notation for t} of a coherent totally symmetric strict $t$:
\sk

1. Symmetry of $t$ means: we can eliminate $t_{ji}$ in favour of $t_{ij}$, we can eliminate $t_{k(ij)}$ in favour of $t_{(ij)k}$ and \cite[Lemma 5.5]{Me} gives us 
\begin{equation}
\L=\R_{312}\,,\quad\L_{213}=\R_{321}\,,\quad
\R=\L_{231}\,,\quad\R_{132}=\L_{321}\,,\quad
\L_{132}=\R_{213}\,,\quad\L_{312}=\R_{231}\,.
\end{equation}

2. Total symmetry means we can eliminate $t_{(ji)k}$ in favour of $t_{(ij)k}$ and \cite[Lemma 5.21]{Me} gives us:
\begin{equation}
\L=\R_{312}=\L_{213}=\R_{321}\,,\quad
\R=\L_{231}=\R_{132}=\L_{321}\,,\quad
\L_{132}=\R_{213}=\L_{312}=\R_{231}\,.
\end{equation}

3. Strictness means we have: 
\begin{equation}
t_{k(ij)}\equiv t_{ki}+t_{kj}\qquad,\qquad
t_{(ij)k}\equiv t_{ik}+t_{jk}\,.
\end{equation}

4. Coherency means we have
\begin{equation}
-\R_{213}=\L+\R=-\L_{132}\quad.
\end{equation}
Putting it all together for a coherent totally symmetric strict $t$, the submodule of $\gl_{\otimes^2}$ is generated by only $t_{12}\,,t_{23}\,,t_{13}\in\PN_{\otimes^2}$ and $\L,\R\in\Mod^{\Rrightarrow0}_{\otimes^2}$ where:
\begin{equation}\label{eqn:2ndDK relations}
\partial\L\equiv[t_{12}\,,t_{13}+t_{23}]\qquad,\qquad
\partial\R\equiv[t_{23}\,,t_{12}+t_{13}]\quad.
\end{equation}
The $\gl_{\otimes^2}$-valued 2-connection on $Y_3:=\{(z_1,z_2,z_3)\in\bbC^3\,|\,z_1\neq z_2\neq z_3\neq z_1\}$ is given by:
\begin{subequations}\label{subeq:CMKZ 2-connection on Y_3}
\begin{align}
\A^{n=2}_\KZ&=\left(\frac{\dd z_1-\dd z_2}{z_1-z_2}\right)t_{12}+\left(\frac{\dd z_1-\dd z_3}{z_1-z_3}\right)t_{13}+\left(\frac{\dd z_2-\dd z_3}{z_2-z_3}\right)t_{23}\quad,\label{eq:KZ 1-form on Y_3}\\
\B^{n=2}_\CM&=\frac{2}{(z_3-z_1)}\left(\frac{\R}{(z_2-z_3)}-\frac{\L}{(z_1-z_2)}\right)(\dd z_1\wedge\dd z_2+\dd z_2\wedge\dd z_3+\dd z_3\wedge\dd z_1)\quad.\label{eq:CM 2-form on Y_3}
\end{align}
\end{subequations}
This 2-connection is fake flat if $t$ is strict but the 2-curvature 3-form \eqref{eqn:2-curv 3-form} always vanishes regardless of whether $t$ is coherent or not; a fact more easily seen to hold for the pullback 2-connection \eqref{sub:pullback formal 2-connection}   
\end{ex}

%%%%%%%%%%%%%%%%%%%%%%%%%%%%%%%%%%%%%%%%%%%%%%%%%%%%

\newpage
\section{A hexagonator series as a 2-holonomy of the CMKZ 2-connection}\label{sec:Constructing Hex}
Throughout this section we assume the infinitesimal 2-braiding $t$ is symmetric and strict. 
\begin{rem}
Cartier integration \cite[Theorem XX.6.1]{Kassel} formalises the notion that the infinitesimal hexagon relations \eqref{eq:strict t as index} are truly first-order relations in some deformation parameter $\hbar$. One defines $\CC[[\hbar]]$ as the category with the same objects as $\CC$ but with morphisms now given by formal power series $\sum_{k\geq0}\hbar^kf_k$ such that the composition and monoidal product are extended in the obvious way. To realise $\gamma^{-1} \sigma:=e^{i\pi\hbar t}$ (a pseudonatural automorphism of $\otimes:\CC[[\hbar]]\boxtimes\CC[[\hbar]]\rightarrow\CC[[\hbar]]$) as a concrete parallel transport, we would need to formalise the CMKZ 2-connection \eqref{subeq:CMKZ 2-connection on Y_n+1} by adding a prefactor of $\hbar$ to $\A_\KZ$ (and, to secure fake flatness \eqref{eqn:fake flatness}, a prefactor of $\hbar^2$ to $\B_\CM$). That being said, it is rather tedious to carry round factors of $\hbar$ hence we absorb them into $t$, $\L$ and $\R$.
\end{rem}   

\subsection{The adapted Bordemann-Rivezzi-Weigel contractible hexagon of 1-paths}\label{subsec:BRW 1-paths}
Following \cite[(2.3)]{BRW}, we denote the \textbf{singly-punctured complex line} by $\bbC^\times:=\bbC\backslash\{0\}$ and the \textbf{doubly-punctured complex line} by $\bbC^{\times\times}:=\bbC\backslash\{0,1\}$. We pullback the formal 2-connection \eqref{subeq:CMKZ 2-connection on Y_3} along the birational biholomorphism
\begin{subequations}\label{subeq:the biholomorphism vartheta}
\begin{alignat}{2}
\vartheta:\bbC^{\times\times}\times\bbC^\times\times\bbC&\cong Y_3\quad&&,\\
(z,v,w)&\mapsto(w,zv+w,v+w)\quad&&,
\end{alignat}
\end{subequations}
which gives us:
\begin{subequations}\label{sub:pullback formal 2-connection}
\begin{alignat}{2}
\nabla:\equiv\vartheta^*\A^{n=2}_\KZ&=\left(\frac{t_{12}}{z}+\frac{t_{23}}{z-1}\right)\dd z+\frac{t_{12}+t_{23}+t_{13}}{v}\dd v\quad&&,\\
\Delta:=\vartheta^*\B^{n=2}_\CM&=2\left(\frac{\L}{zv}+\frac{\R}{(z-1)v}\right)\dd z\wedge\dd v\quad&&.
\end{alignat}
\end{subequations}
\begin{rem}\label{rem:pullback 2-connection is always 2-flat}
Fake flatness \eqref{eqn:fake flatness} of the 2-connection \eqref{sub:pullback formal 2-connection} follows from $t$ being strict \eqref{eqn:2ndDK relations} whereas 2-flatness (i.e., \eqref{eqn:2-curv 3-form} vanishing) \textit{automatically} holds. \cite[Proposition 5.29]{Me} demonstrated that, at second order, the Breen polytope commuting is intrinsically related to coherency of $t$ hence this happening for all orders must be for a reason \textit{more} than simply that the 2-connection $(\nabla,\Delta)$ is 2-flat. In Subsubsection \ref{subsub:S_3 acts coherently}, we will show that coherency of a totally symmetric $t$ means that the 2-connection $(\nabla,\Delta)$ is ``coherently right acted on" by $\mathrm{S}_3$; this subtle property makes the Breen polytope commute at all orders and not merely 2-flatness.
\end{rem}
We denote $\nabla:\equiv\nabla(t_{12},t_{23},t_{13})$ and describe, side-by-side, the transfers along the diffeomorphism \eqref{subeq:the biholomorphism vartheta} of the left action of $\mathrm{S}_3$ on $Y_3$ together with the pullback by such left actions:
\begin{subequations}\label{subeq:S_3 right action on 1-form}
\begin{alignat}{6}
\tau_{12}(z,v,w)&=\left(\frac{z}{z-1},(1-z)v,zv+w\right)&&\qquad\implies\qquad\tau_{12}^*\nabla&&&\equiv\nabla(t_{12}\,,t_{13}\,,t_{23})\,,\label{eq:tau_12 pullback}\\
\tau_{23}(z,v,w)&=\left(\frac{1}{z},zv,w\right)&&\qquad\implies\qquad\tau_{23}^*\nabla&&&\equiv\nabla(t_{13}\,,t_{23}\,,t_{12})\,,\\
\tau_{13}(z,v,w)&=\left(1-z,-v,v+w\right)&&\qquad\implies\qquad\tau_{13}^*\nabla&&&\equiv\nabla(t_{23}\,,t_{12}\,,t_{13})\,,\\
\tau_{(12)3}(z,v,w)&=\left(\frac{1}{1-z},(z-1)v,v+w\right)&&\qquad\implies\qquad\tau_{(12)3}^*\nabla&&&\equiv\nabla(t_{23}\,,t_{13}\,,t_{12})\,,\label{eqn:tau_312 pullback}\\
\tau_{1(23)}(z,v,w)&=\left(\frac{z-1}{z},-zv,zv+w\right)&&\qquad\implies\qquad\tau_{1(23)}^*\nabla&&&\equiv\nabla(t_{13}\,,t_{12}\,,t_{23})\,.\label{eqn:tau_231 pullback}
\end{alignat}
\end{subequations}
In other words, the permutation right action permutes the indices (according to the inverse of the group element of $\mathrm{S}_3$) of the $t$'s and symmetry of $t$ allows us to rewrite $t_{ji}\equiv t_{ij}$ for $i<j$.
\begin{rem}\label{rem:how to visualise  C_xx_times_C_x}
To visualise $\bbC^{\times\times}\times\bbC^\times$, have in mind the following picture
\begin{figure}[H]
\centering
\scalebox{0.4}{\includesvg[width=1000pt]{C_xx_times_C_x}}
\caption{$\bbC^{\times\times}\times\bbC^\times$}
\label{fig: C_xx_times_C_x}
\end{figure}
Basically, we think of the 2 complex dimensions as 4 real dimensions; $(z:=X+iY,v:=Z+ic)$ where $c\in\bbR$ is a colour signature where we take green as $0$, red as negative and blue as positive. The single puncture of $\bbC^\times$ is depicted by the green plane at $Z=0$ which is meant to indicate that $c=0$ is excluded hence a curve could pass through such a plane so long as it is blue or red upon intersection. The two punctures of $\bbC^{\times\times}$ are depicted by the black lines at $(X=0,Y=0)$ and $(X=1,Y=0)$ which indicate that all colours $c\in\bbR$ are excluded for all $Z\in\bbR$.    
\end{rem}
 
\begin{constr}\label{con:BRW1paths}
We make use of \cite[(2.32)]{BRW}:
\begin{figure}[H]
\centering
\scalebox{0.45}{\includesvg[width=1000pt]{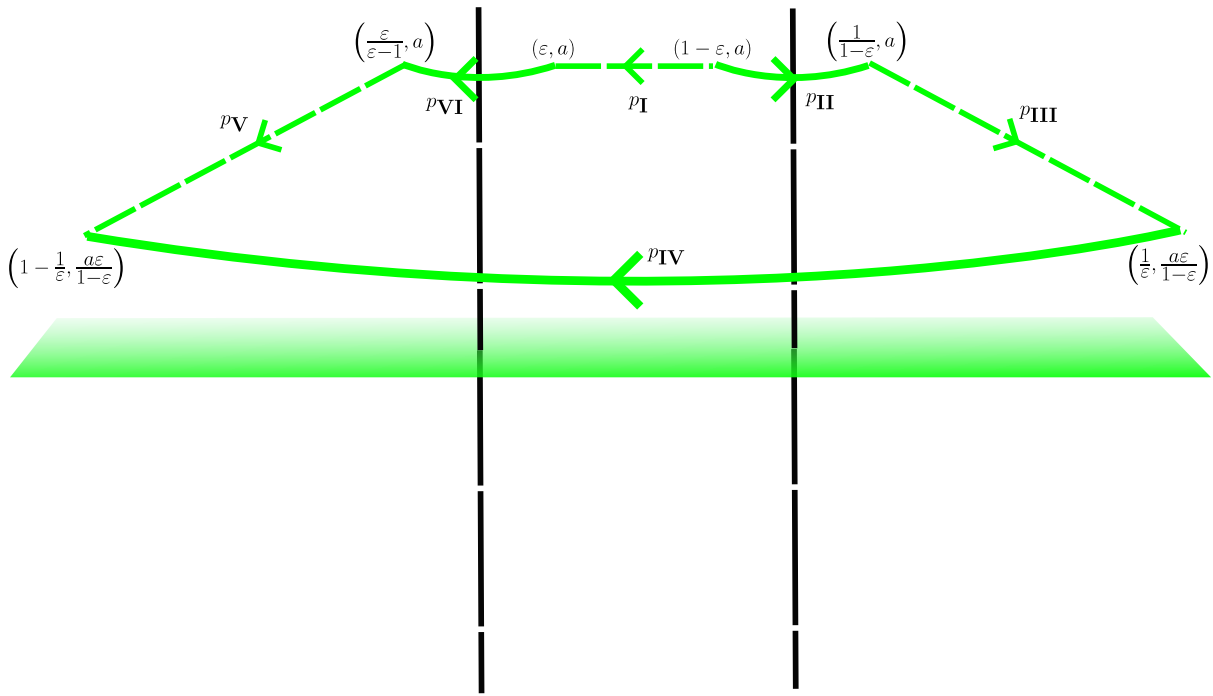}}
\caption{The \textbf{adapted BRW contractible hexagon of 1-paths}}
\label{fig:p-paths}
\end{figure}
\begin{subequations}
\begin{align}
p_\mathbf{I}(r)&:=\big([c_\mathbf{I}\circ\iota](r),a\big)=\big(\varepsilon+(1-r)(1-2\varepsilon),a\big)\\
p_\mathbf{II}(r)&:=\big(c_\mathbf{II}(r),a\big)=\left(\frac{2-\varepsilon-\varepsilon e^{i\pi r}}{2-\varepsilon+\varepsilon e^{i\pi r}},a\right)\\
p_\mathbf{III}(r)&:=\tau_{(12)3}\left(c_\mathbf{I}(r),\frac{a}{\varepsilon-1}\right)=\left(\frac{1}{1-\varepsilon+r(2\varepsilon-1)},\frac{1-\varepsilon+r(2\varepsilon-1)}{1-\varepsilon}a\right)\\
p_\mathbf{IV}(r)&:=\left(c_\mathbf{IV}(r),\frac{a\varepsilon}{1-\varepsilon}\right)=\left(\frac{1}{2}+\left(\frac{1}{\varepsilon}-\frac{1}{2}\right)e^{-i\pi r},\frac{a\varepsilon}{1-\varepsilon}\right)\\
p_\mathbf{V}(r)&:=\tau_{1(23)}\left([c_\mathbf{I}\circ\iota](r),\frac{a}{\varepsilon-1}\right)=\left(\frac{\varepsilon+r(1-2\varepsilon)}{\varepsilon-1+r(1-2\varepsilon)},\frac{1-\varepsilon+r(2\varepsilon-1)}{1-\varepsilon}a\right)\\
p_\mathbf{VI}(r)&:=\big([c_\mathbf{VI}\circ\iota](r),a\big)=\left(\frac{2\varepsilon}{\varepsilon+(2-\varepsilon)e^{i\pi r}},a\right)
\end{align}
\end{subequations}
where: $r\in[0,1]$, $0<\varepsilon\leq\frac{1}{4}$, $\iota$ is the interval inversion \eqref{eq:interval inversion} and $a\in\bbC^\times$ is a free parameter\footnote{It is relevant for building the 2-loop; see Construction \ref{con:the Breen 2-loop} where it will be $\pm1$. For now, just think $a=1$.}.
\end{constr}

\subsection{Vertically-interpolative 2-paths and their explicit 2-holonomies}\label{subsec:interpolative 2-paths}
Given some 2-path $p_\mathbf{V}\,p_\mathbf{VI}\,p_\mathbf{I}\xRightarrow{\P}p_\mathbf{IV}\,p_\mathbf{III}\,p_\mathbf{II}$, the globularity condition \eqref{eq:globularity condition} requires
\begin{equation}\label{eq:W^P as modification}
W^\P:W^{p_\mathbf{V}}W^{p_\mathbf{VI}}W^{p_\mathbf{I}}\Rrightarrow W^{p_\mathbf{IV}}W^{p_\mathbf{III}}W^{p_\mathbf{II}}.
\end{equation}
\begin{rem}\label{rem:explicit parallel transports}
Let us first denote: 
\begin{equation}
\Lambda:\equiv t_{12}+t_{23}+t_{13}\qquad,\qquad\Gamma:\equiv\Gamma(t_{12}\,,t_{23})\equiv\left(\frac{t_{12}}{z}+\frac{t_{23}}{z-1}\right)\dd z\quad.
\end{equation}
We have the following explicit expressions for the parallel transports in \eqref{eq:W^P as modification}:
\begin{subequations}
\begin{equation}
W^{p_\mathbf{I}}\equiv {}^{\nabla}W^{\left(c_\mathbf{I}\circ\iota,a\right)}\equiv {}^{\Gamma}W^{c_\mathbf{I}\circ\iota}\equiv \left({}^{\Gamma}W^{c_\mathbf{I}}\right)^{-1}\equiv \varepsilon^{t_{12}}\Phi^\varepsilon_{321}\varepsilon^{-t_{23}}\quad,
\end{equation}
where the second equality used the fact that $\left(c_\mathbf{I}\circ\iota,a\right)$ is fixed at $v=a$, the third equality used functoriality of parallel transport and the last equality used \eqref{eq:potentially finite Drinfeld} and \eqref{eq:Drinfeld swap is inverse}.
\begin{equation}\label{eq:W^p_II}
W^{p_\mathbf{II}}\equiv {}^{\nabla}W^{(c_\mathbf{II},a)}\equiv {}^{\Gamma}W^{c_\mathbf{II}}\equiv e^{i\pi t_{23}}H_\varepsilon(t_{23}\,,t_{12})\quad,
\end{equation}
where we used \cite[Lemma 21]{BRW} for the last equality.
\begin{equation}\label{eq:3rd partra in rem}
W^{p_\mathbf{III}}\equiv {}^{\tau_{(12)3}^*\nabla}W^{\left(c_\mathbf{I},\frac{a}{\varepsilon-1}\right)}\equiv {}^{\Gamma(t_{23}\,,t_{13})}W^{c_\mathbf{I}}\equiv \varepsilon^{t_{13}}\Phi^\varepsilon_{231}\varepsilon^{-t_{23}}\,,
\end{equation}
where we used \cite[Theorem 9, iv.)]{BRW} for the first equality and \eqref{eqn:tau_312 pullback} for the second equality.
\begin{equation}
W^{p_\mathbf{IV}}\equiv{}^{\Gamma}W^{c_\mathbf{IV}}\equiv e^{i\pi \overline{t_{13}}}H_\varepsilon\big(\overline{t_{13}},t_{23}\big)\quad,
\end{equation}
where $\overline{t_{13}}:\equiv t_{13}-\Lambda$ and we used \cite[(2.34)]{BRW}. 
\begin{equation}\label{eq:p_V parallel transport}
W^{p_\mathbf{V}}\equiv {}^{\nabla}W^{\tau_{1(23)}\left(c_\mathbf{I}\circ\iota,\frac{a}{\varepsilon-1}\right)}\equiv {}^{\Gamma(t_{13}\,,t_{12})}W^{c_\mathbf{I}\circ\iota}\equiv \varepsilon^{t_{13}}\Phi^\varepsilon_{213}\varepsilon^{-t_{12}}\quad,
\end{equation}
where we used \eqref{eqn:tau_231 pullback} for the second equality.
\begin{equation}\label{eq:8 of first partras}
W^{p_\mathbf{VI}}\equiv {}^{\nabla}W^{(c_\mathbf{VI}\circ\iota,a)}\equiv {}^{\Gamma}W^{c_\mathbf{VI}\circ\iota}\equiv \left({}^{\Gamma}W^{c_\mathbf{VI}}\right)^{-1}\equiv \left(H_\varepsilon\left(t_{12}\,,\overline{t_{13}}\right)\right)^{-1}e^{i\pi \overline{t_{(12)3}}}\quad,
\end{equation}
\end{subequations}
where we used \cite[(2.34)]{BRW} again for the fourth equality.
\end{rem}
Now we can explicitly express the LHS and RHS of \eqref{eq:W^P as modification}. For space reasons we write, for example, $H_\varepsilon(t_{23},t_{12})=H_\varepsilon^\mathbf{2}$ in the following,
\begin{subequations}
\begin{align}
\varepsilon^{- t_{13}}W^\P\varepsilon^{ t_{23}}:\Phi^\varepsilon_{213}\left[\widetilde{H}_\varepsilon^\mathbf{6}\right]^{-1}e^{i\pi \overline{t_{(12)3}}}\Phi^\varepsilon_{321}\Rrightarrow \varepsilon^{-\ad_{t_{13}}}\left[e^{i\pi\overline{t_{13}}}\right]\widetilde{H}_\varepsilon^\mathbf{4}\Phi^\varepsilon_{231}e^{i\pi t_{23}}\widetilde{H}_\varepsilon^\mathbf{2}
\end{align}
where the tilde notation refers to that in \cite[Theorem 22]{BRW}. Taking the limit $\varepsilon\rightarrow0$, we get 
\begin{equation}\label{RHS prehex}
\varepsilon^{- t_{13}}W^\P\varepsilon^{ t_{23}}:\,\Phi_{213}e^{i\pi\overline{t_{(12)3}}}\Phi_{321}\Rrightarrow \varepsilon^{-\ad_{t_{13}}}\left[e^{i\pi\overline{t_{13}}}\right]\Phi_{231}e^{i\pi t_{23}}.
\end{equation}
\end{subequations}
Prima facie, \eqref{RHS prehex} looks very similar to \eqref{eq:sym t right pre-hex}. Not until Subsection \ref{sub:finale of hex} will we be able to provide an explicit formula for the right hexagonator series but in this subsection we can already re-derive the infinitesimal right hexagonator by balancing the second order terms of the 2-holonomy against the second order surplus terms of \eqref{RHS prehex}. The concrete 2-path $\P$ we construct is built from three ``vertically-interpolative" 2-paths. 
\subsubsection{The first vertically-interpolative 2-path}\label{subsub:1st 2path}
What we have in mind is given by $p_\mathbf{V}\xRightarrow{\P_\mathbf{V}}p^1_\downarrow(c_\mathbf{V}\circ\iota,a)$ where
\begin{equation}\label{first 2-path}
\P_\mathbf{V}(s,r):=\begin{cases}
      p_\leftarrow(r):=\big([c_\mathbf{V}\circ\iota](2r),a\big)\,, & 0\leq r\leq\frac{s}{2}\\
      p^s_\downarrow(r):=\left([c_\mathbf{V}\circ\iota](s),\frac{1-c_\mathbf{I}(2r-s)}{1-\varepsilon}a\right)\,, & \frac{s}{2}\leq r\leq s\\
      p_\mathbf{V}(r)\,, & s\leq r\leq1
    \end{cases}\qquad.
\end{equation}
Figure \ref{fig:1stInterpolative2path} below provides the reason why we call this a \textbf{vertically-interpolative 2-path}.
\begin{figure}[H]
\centering
\scalebox{0.2}{\includesvg[width=1000pt]{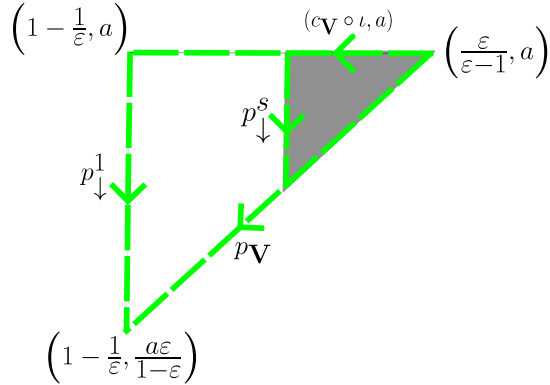}}
\caption{The first vertically-interpolative 2-path}
\label{fig:1stInterpolative2path}
\end{figure}
\begin{rem}
The idea is that, for increasing value of the homotopical parameter $s$, the grey triangle grows and the cross-sectional 1-path $\P_\mathbf{V}^s$ increasingly interpolates from the source 1-path $p_\mathbf{V}$ to the target 1-path $p_\downarrow^1\,(c_\mathbf{V}\circ\iota,a)$ and the ``midsection" of the cross-sectional 1-path is always purely vertical, in this case from $p_\downarrow^s\big(\tfrac{s}{2}\big)$ to $p_\downarrow^s(s)$. This interpolative property is \emph{essential} when it comes to explicitly computing the 2-holonomy (and hence an explicit formula for the right hexagonator series itself) because the integrand of, say, \eqref{1st 2-holonomy} requires knowledge of the parallel transport along the cross-sectional 1-path. In other words, if the cross-sectional 1-path was not built from concatenating 1-paths which we \textit{already} know how to parallel transport along then no explicit computation of the 2-holonomy would be possible.
\end{rem}
The formula for the 2-holonomy \eqref{eq:CM def of 2hol for gl_F} along this specific 2-path $\P_\mathbf{V}$ becomes
\begin{subequations}
\begin{equation}\label{1st 2-holonomy}
W^{\P_\mathbf{V}}=\int_0^1\int_{\frac{s}{2}}^sW_{1r}^{\P_\mathbf{V}^s}\Delta\left[\frac{\partial\P_\mathbf{V}}{\partial s},\frac{\partial\P_\mathbf{V}}{\partial r}\right]W_{r0}^{\P_\mathbf{V}^s}\,\dd r\dd s
\end{equation}
where the first integral is over $\frac{s}{2}\leq r\leq s$ because outside of that range $\frac{\partial\P_\mathbf{V}}{\partial s}=(0,0)$. Conversely, within that range we say $\P_\mathbf{V}^s(r)=p_\downarrow^s(r)=\left(z(s):=[c_\mathbf{V}\circ\iota](s)\,,\,v(r,s):=\frac{1-c_\mathbf{I}(2r-s)}{1-\varepsilon}a\right)$ hence 
\begin{equation}
\Delta\left[\frac{\partial\P_\mathbf{V}}{\partial s},\frac{\partial\P_\mathbf{V}}{\partial r}\right]=\Delta\big[\left(\partial_sz,\partial_sv\right),\left(0,\partial_rv\right)\big]=\frac{1}{v}\left(\frac{\L}{z}+\frac{\R}{z-1}\right)\frac{\partial z}{\partial s}\frac{\partial v}{\partial r}
\end{equation}
and we also have $W_{r0}^{\P_\mathbf{V}^s}\equiv W_{r\frac{s}{2}}^{p_\downarrow^s}W_{\frac{s}{2}0}^{p_\leftarrow}$ and $W_{1r}^{\P_\mathbf{V}^s}\equiv W_{1s}^{p_\mathbf{V}}W_{s\frac{s}{2}}^{p_\downarrow^s}\left(W_{r\frac{s}{2}}^{p_\downarrow^s}\right)^{-1}$ where
\begin{equation}
W_{r\frac{s}{2}}^{p_\downarrow^s}\equiv {}^{\frac{\Lambda}{v}\dd v}W_{(2r-s)0}^{\frac{1-c_\mathbf{I}}{1-\varepsilon}a}\equiv \exp\left(\left[\ln v\right]^{v(r,s)}_{a}\Lambda\right)\equiv e^{\ln\left(\frac{v}{a}\right)\Lambda}\quad.
\end{equation}
These two facts allow us to rewrite the above double integral as
\begin{equation}
W^{\P_\mathbf{V}}=\int_0^1\frac{\partial z}{\partial s}W_{1s}^{p_\mathbf{V}}W_{s\frac{s}{2}}^{p_\downarrow^s}\int_{\frac{s}{2}}^s\frac{1}{v}\frac{\partial v}{\partial r}e^{-\ln\left(\frac{v}{a}\right)\Lambda}\left(\frac{\L}{z}+\frac{\R}{z-1}\right)e^{\ln\left(\frac{v}{a}\right)\Lambda}\dd r\,W_{\frac{s}{2}0}^{p_\leftarrow}\dd s\quad.
\end{equation}
\end{subequations}
We can explicitly solve the inner integral by using the following lemma.
\begin{lem}\label{lem:solve 1st integral}
Given some function $f:\left[0\,,1\right]\to\bbC$ and modification $\M$, we have 
\begin{equation}
\frac{\partial}{\partial r}\left(e^{-f(r)\Lambda}\sum_{k=1}^\infty\frac{\big(f(r)\big)^k}{k!}\sum_{j=0}^{k-1}\Lambda^j\M\Lambda^{k-1-j}\right)=f'(r)e^{-f(r)\Lambda}\M e^{f(r)\Lambda}\quad.
\end{equation}
\end{lem}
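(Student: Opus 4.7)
The plan is to verify the identity directly by differentiation and careful re-indexing of the double sums, using the fact that $\Lambda$ commutes with $e^{-f\Lambda}$ while $\M$ does not commute with either. Writing $S(r):=\sum_{k=1}^\infty\tfrac{(f(r))^k}{k!}\sum_{j=0}^{k-1}\Lambda^j\M\Lambda^{k-1-j}$, the product rule and the fact that $\tfrac{\partial}{\partial r}e^{-f(r)\Lambda}=-f'(r)\Lambda\,e^{-f(r)\Lambda}=-f'(r)e^{-f(r)\Lambda}\Lambda$ give
\begin{equation}
\tfrac{\partial}{\partial r}\bigl(e^{-f\Lambda}S\bigr)=e^{-f\Lambda}\bigl(-f'\Lambda\, S+\tfrac{\partial S}{\partial r}\bigr)\,,
\end{equation}
so it will suffice to show $-f'\Lambda\,S+\tfrac{\partial S}{\partial r}=f'\M\,e^{f\Lambda}$.

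Next, I would compute each of the two summands on the left. The chain rule yields $\tfrac{\partial S}{\partial r}=f'\sum_{k=1}^\infty\tfrac{f^{k-1}}{(k-1)!}\sum_{j=0}^{k-1}\Lambda^j\M\Lambda^{k-1-j}$; after the shift $m=k-1$ this becomes $f'\sum_{m=0}^\infty\tfrac{f^m}{m!}\sum_{j=0}^m\Lambda^j\M\Lambda^{m-j}$. On the other hand, $\Lambda$ absorbs into the leading exponent of $\Lambda$ in each summand of $S$, giving $\Lambda S=\sum_{m=1}^\infty\tfrac{f^m}{m!}\sum_{j=1}^m\Lambda^j\M\Lambda^{m-j}$ after re-indexing $j\mapsto j+1$ and relabelling $k=m$.

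Subtracting, every internal term with $j\geq1$ cancels out, leaving only the $j=0$ contributions from $\tfrac{\partial S}{\partial r}$ together with the $m=0$ term, i.e.\
\begin{equation}
-f'\Lambda\,S+\tfrac{\partial S}{\partial r}=f'\Bigl[\M+\sum_{m=1}^\infty\tfrac{f^m}{m!}\M\Lambda^m\Bigr]=f'\M\,e^{f\Lambda}\,,
\end{equation}
which is exactly what was needed after reinserting the overall factor $e^{-f\Lambda}$ from the first displayed equation.

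The only subtlety is bookkeeping: one must keep the order $\Lambda^j\M\Lambda^{k-1-j}$ intact throughout, since $\M$ and $\Lambda$ do not commute (indeed $\partial\M$ is built from commutators like those in \eqref{eqn:2ndDK relations}), whereas $e^{\pm f\Lambda}$ and $\Lambda$ itself commute freely and may be pulled through without penalty. Convergence is not an issue because everything takes place in the completed universal enveloping algebra of the Drinfeld--Kohno differential crossed module (with the tacit $\hbar$-filtration absorbed into $t$), where such formal power series manipulations are valid term-by-term. The main ``obstacle'' is thus purely notational and the computation is routine once the two sums are cast in the common indexing $(m,j)$ as above.
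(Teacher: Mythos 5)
Your proof is correct and is exactly the computation the paper intends: its proof is the one-line remark ``simply use the product rule on the LHS,'' and you have carried out that product rule, the reindexing $m=k-1$, $j\mapsto j+1$, and the telescoping cancellation explicitly and without error. No issues to report.
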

\begin{proof}
Simply use the product rule on the LHS.
\end{proof}
We have: $\frac{1}{v}\frac{\partial v}{\partial r}=\frac{\partial\ln\left(\frac{v}{a}\right)}{\partial r}$, $v(\frac{s}{2},s)=a$ and $\frac{v(s,s)}{a}=\frac{1-c_\mathbf{I}(s)}{1-\varepsilon}$ hence applying Lemma \ref{lem:solve 1st integral} gives us
\begin{equation}\label{eqn:pre-explicit 2-hol}
W^{\P_\mathbf{V}}= W^{p_\mathbf{V}}\int_0^1\frac{\partial z}{\partial s}\left(W_{s0}^{p_\mathbf{V}}\right)^{-1}\sum_{k=1}^\infty\frac{\left[\ln\frac{1-c_\mathbf{I}(s)}{1-\varepsilon}\right]^k}{k!}\sum_{j=0}^{k-1}\Lambda^j\left(\frac{\L}{z}+\frac{\R}{z-1}\right)\Lambda^{k-1-j}W_{\frac{s}{2}0}^{p_\leftarrow}\dd s\,.
\end{equation}
To integrate (by parts, iteratively) out the parallel transports, we must find a common horizontal 1-path:
\begin{subequations}\label{subeq:1st common hor 1-path}
\begin{equation}\label{eqn:partra along p_leftarrow}
W_{\frac{s}{2}0}^{p_\leftarrow}\equiv\left({}^{\Gamma}W_{s0}^{c_\mathbf{V}}\right)^{-1}\equiv{}^{\Gamma\left(t_{12}\,,\overline{t_{13}}\right)}W_{s0}^{c_\mathbf{I}}\quad,
\end{equation}
where the first equality comes from the definition of $p_\leftarrow$ in \eqref{first 2-path} and the second equality comes from \cite[third line of (2.33)]{BRW} together with \eqref{eq:Drinfeld swap is inverse}. We also have
\begin{equation}\label{eqn:partra along p_V}
\left(W_{s0}^{p_\mathbf{V}}\right)^{-1}\equiv \left({}^{\Gamma(t_{12}\,,t_{13})}W_{s0}^{c_\mathbf{I}}\right)^{-1}
\end{equation}
\end{subequations}
which simply comes from \eqref{eq:p_V parallel transport}.

\begin{propo}\label{propo:solved 1st 2hol}
Recall the notation for iterated integrals from Remark \ref{rem:iterated integral defined inductively} and the particular 1-forms $\Omega_0:=\frac{ds}{s}$ and $\Omega_1:=\frac{ds}{s-1}$. The explicit solution for \eqref{eqn:pre-explicit 2-hol} is given by
\begin{equation}\label{eq:W^(P_V)=}
W^{\P_\mathbf{V}}=f_\mathbf{V}(1)W^{c_\mathbf{V}\circ\iota}
\end{equation} 
where the function $f_\mathbf{V}(s)$ is defined as
\begin{equation}
\sum_{\begin{smallmatrix}r=0\\k=1\end{smallmatrix}}^\infty\,\sum_{i_1,\ldots,i_r=0}^1\,\sum_{j=0}^{k-1}\int_\varepsilon^{c_\mathbf{I}(s)}\prod_{m=1}^r\Omega_{i_m}\ad_{t_{12}}^{1-i_m}(\ad_{\overline{t_{13}}}+\Lambda)^{i_m}\Lambda^j\left[\Omega_0\Omega_1^k\L-\Omega_1^{k+1}(\L+\R)\right]\Lambda^{k-1-j}.
\end{equation}
\end{propo}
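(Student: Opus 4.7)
The plan is to convert the pre-explicit formula \eqref{eqn:pre-explicit 2-hol} into the advertised iterated-integral series via repeated integration by parts, exactly as signalled by the sentence preceding \eqref{subeq:1st common hor 1-path}. First I would substitute the common-horizontal-path identities \eqref{subeq:1st common hor 1-path} to rewrite both surrounding parallel transports $(W_{s0}^{p_\mathbf{V}})^{-1}$ and $W_{\frac{s}{2}0}^{p_\leftarrow}$ as path-ordered exponentials along the \emph{same} curve $c_\mathbf{I}|_{[0,s]}$ but with respect to the two distinct pullback KZ 1-forms $\Gamma(t_{12},t_{13})$ and $\Gamma(t_{12},\overline{t_{13}})$. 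The global prefactor $W^{c_\mathbf{V}\circ\iota}$ in \eqref{eq:W^(P_V)=} is then extracted by keeping $W^{p_\mathbf{V}}$ on the outside and combining it with the outermost boundary piece of integration by parts, so that only the inner $f_\mathbf{V}(1)$-part remains to be computed.

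With both transports along the same curve, each round of integration by parts differentiates one of the two path-ordered exponentials, contributing a letter of its connection 1-form to the outer integrand. Labelling these letters by $i_m\in\{0,1\}$ according to the alphabet $\{\Omega_0,\Omega_1\}$ and interpreting the sandwich $(\cdots)^{-1}\cdot(\text{core})\cdot(\cdots)$ as iterated adjoint action on the core, one obtains $\ad_{t_{12}}$ when $i_m=0$ (both connections supply $t_{12}/z$) and the asymmetric combination $\ad_{\overline{t_{13}}}+\Lambda$ when $i_m=1$: the right connection supplies $\overline{t_{13}}$ by commutation while the left supplies $t_{13}=\overline{t_{13}}+\Lambda$, with the $\overline{t_{13}}$-parts combining into $\ad_{\overline{t_{13}}}$ and the residual $\Lambda$ acting by left multiplication, since the $\Lambda$-conjugation from the vertical midsection has already been absorbed, via Lemma \ref{lem:solve 1st integral}, into the central factor $\Lambda^j(\cdot)\Lambda^{k-1-j}$ as ordinary left/right multiplication.

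For the logarithmic factor, I would recognise $[\ln\frac{1-c_\mathbf{I}(s)}{1-\varepsilon}]^k/k!$ as the ordered iterated integral $\int_\varepsilon^{c_\mathbf{I}(s)}\Omega_1^k$ (by the shuffle identity for a repeated letter), and then shuffle this $\Omega_1^k$-string against the remaining core 1-form $\bigl(\tfrac{\L}{z}+\tfrac{\R}{z-1}\bigr)\dd z=\Omega_0\L+\Omega_1\R$. A final telescoping step between the $\Omega_1^{k+1}$-terms produced by combining $\Omega_1^k$ with $\Omega_1\R$ and those produced by integration-by-parts boundary corrections of $\Omega_1^k\cdot\Omega_0\L$ collapses the sum into the bracketed kernel $\Omega_0\Omega_1^k\L-\Omega_1^{k+1}(\L+\R)$; the minus sign and the grouping $(\L+\R)$ are the signatures of that telescoping. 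Summing over all $r,k,j$ and $i_1,\ldots,i_r$ and evaluating at $s=1$ then yields $f_\mathbf{V}(1)$.

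The most delicate step will be the bookkeeping of the central element $\Lambda$ across the shuffle step: because $\Lambda$ does not strictly commute with the $t_{ij}$'s (its commutators being the nonzero boundaries $\partial\L$ and $\partial\R$ via \eqref{eqn:2ndDK relations}), every time $\Lambda$ is moved—whether harvested from the left-side decomposition $t_{13}=\overline{t_{13}}+\Lambda$, generated at the logarithmic boundary $c_\mathbf{I}(\cdot)=\varepsilon$, or absorbed into the central $\Lambda^j(\cdots)\Lambda^{k-1-j}$ position—one must check that it either lands in the advertised normal form or produces a commutator boundary which cancels against another. This coherent routing of $\Lambda$ through the iterated-integral/adjoint hybrid is what forces the answer into the tidy four-index sum over $(r,k,j,i_1,\ldots,i_r)$ stated in the proposition.
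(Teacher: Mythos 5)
Your account of the letters is essentially the paper's mechanism: after \eqref{subeq:1st common hor 1-path} both flanking transports run along $c_\mathbf{I}$ with connections $\Gamma(t_{12},t_{13})$ (left, inverted) and $\Gamma(t_{12},\overline{t_{13}})$ (right), and the mismatch $t_{13}=\overline{t_{13}}+\Lambda$ is exactly what produces the letter $\ad_{\overline{t_{13}}}+\Lambda$ with $\Lambda$ acting by left multiplication, while the $\Omega_0$-letter gives $\ad_{t_{12}}$. The paper packages your ``iterated integration by parts'' slightly differently: it \emph{defines} $f_\mathbf{V}$ (with $f_\mathbf{V}(0)=0$) by requiring $\tfrac{\dd}{\dd s}\big[W_{s0}^{p_\mathbf{V}\circ\iota}f_\mathbf{V}W_{\frac{s}{2}0}^{p_\leftarrow}\big]$ to equal the integrand of \eqref{eqn:pre-explicit 2-hol}, applies the product rule, cancels the transports, integrates, and then inverts the resulting Volterra operator $\Id-I_{s0}\circ L_{\Gamma_{c_\mathbf{I}}(\ad_{t_{12}},\,\ad_{\overline{t_{13}}}+\Lambda)}$ as a Neumann series, the inversion being licensed by the implicit $\hbar$ exactly as in \eqref{eq:super compact expression for W^(c_I)}. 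You should say something to this effect, since it is what turns ``infinitely many integrations by parts'' into a well-defined formal series.

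The genuine gap is your derivation of the kernel $\Omega_0\Omega_1^k\L-\Omega_1^{k+1}(\L+\R)$. The core 1-form $\big(\tfrac{\L}{z}+\tfrac{\R}{z-1}\big)\dd z$ lives in the variable $z=c_\mathbf{V}(1-s)=1-\tfrac{1}{1-c_\mathbf{I}(s)}$, whereas every $\Omega_{i}$ in the proposition is a letter in the $c_\mathbf{I}$-variable, so the identification $\tfrac{\dd z}{z}=\Omega_0$, $\tfrac{\dd z}{z-1}=\Omega_1$ you invoke is not available; had it been, your ``shuffle then telescope'' mechanism would output $\Omega_0\Omega_1^k\L+\Omega_1^{k+1}\R$, not the stated kernel. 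What actually produces the minus sign and the $(\L+\R)$ grouping is the change of variables \eqref{eq:z=c_V(1-s) implies 1/z dz/ds =.. and 1/z-1 dz/ds=..}: $\tfrac{1}{z}\tfrac{\partial z}{\partial s}$ pulls back to the density of $\Omega_0-\Omega_1$ and $\tfrac{1}{z-1}\tfrac{\partial z}{\partial s}$ to that of $-\Omega_1$, and the logarithmic factor $\tfrac{1}{k!}\big[\ln\tfrac{1-c_\mathbf{I}}{1-\varepsilon}\big]^k$ is simply the inner block $\int_\varepsilon^{c_\mathbf{I}}\Omega_1^k$ of \eqref{eq:int_a^b Omega_1^k}, so the nested-integral definition immediately yields $\int_\varepsilon^{c_\mathbf{I}(s)}\Omega_0\Omega_1^k\,\L-\int_\varepsilon^{c_\mathbf{I}(s)}\Omega_1^{k+1}(\L+\R)$; there is no telescoping and no boundary corrections anywhere. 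Relatedly, your closing worry about ``routing $\Lambda$'' and cancelling commutator boundaries is a misdiagnosis: in the actual computation no commutator of $\Lambda$ with the $t_{ij}$'s is ever expanded --- $\Lambda$ enters only through the fixed central powers $\Lambda^j(\cdot)\Lambda^{k-1-j}$ already present in \eqref{eqn:pre-explicit 2-hol} and through the letter $\ad_{\overline{t_{13}}}+\Lambda$, with all products kept in order.
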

\begin{proof}
The origin of $f_\mathbf{V}$ is as the solution (with $f_\mathbf{V}(0)=0$) of the following $1^\mathrm{st}$-order ODE, 
\begin{subequations}
\begin{equation}
\frac{\dd }{\dd s}\left[W_{s0}^{p_\mathbf{V}\circ\iota}f_\mathbf{V}\,W_{\frac{s}{2}0}^{p_\leftarrow}\right]=\frac{\partial z}{\partial s}W_{s0}^{p_\mathbf{V}\circ\iota}\sum_{k=1}^\infty\frac{\left[\ln\frac{1-c_\mathbf{I}}{1-\varepsilon}\right]^k}{k!}\sum_{j=0}^{k-1}\Lambda^j\left[\frac{\L}{z}+\frac{\R}{z-1}\right]\Lambda^{k-1-j}W_{\frac{s}{2}0}^{p_\leftarrow}.
\end{equation}
Indeed, using \eqref{subeq:1st common hor 1-path} and the product rule on the LHS gives us\footnote{For parallel transport ${}^{A}W_{s0}^{p}$, we have $\frac{\dd }{\dd s}{}^{\A}W_{s0}^{p}\equiv\A_p\,{}^{\A}W_{s0}^{p}$ hence $\frac{\dd }{\dd s}\left({}^{\A}W_{s0}^{p}\right)^{-1}\equiv -\left({}^{\A}W_{s0}^{p}\right)^{-1}\A_p$.}
\begin{equation}
W_{s0}^{p_\mathbf{V}\circ\iota}\left(f'_\Leftarrow+f_\mathbf{V}\left(\frac{t_{12}}{c_\mathbf{I}}+\frac{\overline{t_{13}}}{c_\mathbf{I}-1}\right)\frac{\dd c_\mathbf{I}}{\dd s}-\left(\frac{t_{12}}{c_\mathbf{I}}+\frac{t_{13}}{c_\mathbf{I}-1}\right)\frac{\dd c_\mathbf{I}}{\dd s}f_\mathbf{V}\right)W_{\frac{s}{2}0}^{p_\leftarrow}\quad.
\end{equation}
Cancelling $W_{s0}^{p_\mathbf{V}\circ\iota}$ and $W_{\frac{s}{2}0}^{p_\leftarrow}$ on both sides then integrating from 0 to $s$ and using $f_\mathbf{V}(0)=0$ yields
\begin{footnotesize}
\begin{equation}
f_\mathbf{V}-\int_0^s\left[\frac{\ad_{t_{12}}}{c_\mathbf{I}}+\frac{\ad_{\overline{t_{13}}}+\Lambda}{c_\mathbf{I}-1}\right]f_\mathbf{V}\frac{\dd c_\mathbf{I}}{\dd s'}\dd s'=\int_0^s\frac{\partial z}{\partial s'}\sum_{k=1}^\infty\frac{\left[\ln\frac{1-c_\mathbf{I}}{1-\varepsilon}\right]^k}{k!}\sum_{j=0}^{k-1}\Lambda^j\left[\frac{\L}{z}+\frac{\R}{z-1}\right]\Lambda^{k-1-j}\dd s'\,.
\end{equation}
\end{footnotesize}
For the RHS, let us simply recall that $z:=c_\mathbf{V}(1-s)=1-\frac{1}{1-c_\mathbf{I}(s)}$ which gives:
\begin{equation}\label{eq:z=c_V(1-s) implies 1/z dz/ds =.. and 1/z-1 dz/ds=..}
\frac{1}{z}\frac{\partial z}{\partial s}=\left(\frac{1}{c_\mathbf{I}(s)}+\frac{1}{1-c_\mathbf{I}(s)}\right)\frac{\partial c_\mathbf{I}}{\partial s}\qquad,\qquad
\frac{1}{z-1}\frac{\partial z}{\partial s}=\frac{1}{1-c_\mathbf{I}(s)}\frac{\partial c_\mathbf{I}}{\partial s}\quad,
\end{equation}
and now \eqref{eq:int_a^b Omega_1^k} gives us:
\begin{alignat}{2}
\int_0^s\frac{\left[\ln\left(\frac{1-c_\mathbf{I}(s')}{1-\varepsilon}\right)\right]^k}{k!(z-1)}\frac{\partial z}{\partial s'}\dd s'&=-\int_\varepsilon^{c_\mathbf{I}(s)}\Omega_1^{k+1}\quad&&,\\
\int_0^s\frac{\left[\ln\left(\frac{1-c_\mathbf{I}(s')}{1-\varepsilon}\right)\right]^k}{k!z}\frac{\partial z}{\partial s'}\dd s'&=\int_\varepsilon^{c_\mathbf{I}(s)}\Omega_0\Omega_1^k-\int_\varepsilon^{c_\mathbf{I}(s)}\Omega_1^{k+1}\quad&&.
\end{alignat}
The RHS now reads as
\begin{equation}
\sum_{k=1}^\infty\sum_{j=0}^{k-1}\Lambda^j\left[\int_\varepsilon^{c_\mathbf{I}(s)}\Omega_0\Omega_1^k\L-\int_\varepsilon^{c_\mathbf{I}(s)}\Omega_1^{k+1}(\L+\R)\right]\Lambda^{k-1-j}\quad.
\end{equation}
As for the LHS, we can rewrite this as
\begin{equation}
f_\mathbf{V}(s)-\int_0^s\Gamma_{c_\mathbf{I}}\left(\ad_{t_{12}},\ad_{\overline{t_{13}}}+\Lambda\right)f_\mathbf{V}(s')=\left(\Id-I_{s0}\circ L_{\Gamma_{c_\mathbf{I}}\left(\ad_{t_{12}},\ad_{\overline{t_{13}}}+\Lambda\right)}\right)f_\mathbf{V}\quad.
\end{equation}
\end{subequations}
As in \eqref{eq:super compact expression for W^(c_I)}, the (implicit) factor of $\hbar$ allows us to invert this operator to get an iterated integral series.
\end{proof}

\begin{cor}\label{cor:h^2 1st 2hol}
The lowest order term of both $\lim_{\varepsilon\to0} f_\mathbf{V}(1)$ and $\lim_{\varepsilon\to0}W^{\P_\mathbf{V}}$ is given by
\begin{equation}\label{eqn:lowest term 1st 2hol}
\left(W^{\P_\mathbf{V}}\right)_2\rightarrow\,-\frac{\pi^2}{6}\L-\frac{(\ln\varepsilon)^2}{2}(\L+\R)\quad.
\end{equation}
\end{cor}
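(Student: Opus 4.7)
The plan is to extract the $O(\hbar^2)$ part of the series in Proposition \ref{propo:solved 1st 2hol}. Treating each $t_{ij}$ (and hence every instance of $\ad_{t_{12}}$, $\ad_{\overline{t_{13}}}+\Lambda$, and each factor of $\Lambda$) as carrying one formal power of $\hbar$, and each of $\L$, $\R$ as carrying $\hbar^2$, the summand of $f_\mathbf{V}(s)$ indexed by $(r,k,j)$ lives in degree $\hbar^{r+k+1}$: one $\hbar^2$ from the single $\L$- or $\R$-insertion, $k-1$ from the $\Lambda^j\cdots\Lambda^{k-1-j}$ flanks, and $r$ from the $\ad$-factors. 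The only tuple contributing at order $\hbar^2$ is therefore $r=0$, $k=1$, $j=0$, so the second-order part of $f_\mathbf{V}(1)$ reduces to
\begin{equation*}
\L\int_\varepsilon^{1-\varepsilon}\Omega_0\Omega_1\;-\;(\L+\R)\int_\varepsilon^{1-\varepsilon}\Omega_1^2\quad,
\end{equation*}
where I have used $c_\mathbf{I}(1)=1-\varepsilon$.

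Next I would evaluate the two iterated integrals. The second is elementary: iterated integrals of a single form satisfy the shuffle reduction $\int_\varepsilon^{1-\varepsilon}\Omega_1^2=\tfrac{1}{2}\bigl(\int_\varepsilon^{1-\varepsilon}\Omega_1\bigr)^2=\tfrac{1}{2}\bigl(\ln\tfrac{\varepsilon}{1-\varepsilon}\bigr)^2$, which tends to $\tfrac{1}{2}(\ln\varepsilon)^2$ as $\varepsilon\to 0$. The first is the Aomoto--Chen representation of the classical dilogarithm on $\bbC^{\times\times}$ with the conventions inherited from \cite{BRW}: after writing $\int_\varepsilon^{1-\varepsilon}\tfrac{dt_1}{t_1}\int_\varepsilon^{t_1}\tfrac{dt_2}{t_2-1}$ and substituting, one identifies it with $-\bigl(\Li_2(1-\varepsilon)-\Li_2(\varepsilon)\bigr)$ plus logarithmic cross-terms whose $\varepsilon\to 0$ limit equals $-\Li_2(1)=-\pi^2/6$. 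Substituting these into the display above produces exactly the coefficients $-\tfrac{\pi^2}{6}$ and $-\tfrac{(\ln\varepsilon)^2}{2}$ of \eqref{eqn:lowest term 1st 2hol}.

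Finally, \eqref{eq:W^(P_V)=} reads $W^{\P_\mathbf{V}}=f_\mathbf{V}(1)\,W^{c_\mathbf{V}\circ\iota}$; because $W^{c_\mathbf{V}\circ\iota}$ is the path-ordered exponential of a $1$-form starting in degree $\hbar$, while $f_\mathbf{V}(1)$ itself begins in degree $\hbar^2$, the second-order parts of $W^{\P_\mathbf{V}}$ and $f_\mathbf{V}(1)$ coincide, so both limits stated in the corollary hold simultaneously.

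The main obstacle is the careful bookkeeping of the potentially divergent endpoint behaviour of $\int_\varepsilon^{1-\varepsilon}\Omega_0\Omega_1$: one must verify that the apparent $\ln\varepsilon\cdot\ln(1-\varepsilon)$ and $\ln^2(1-\varepsilon)$ cross-terms vanish as $\varepsilon\to 0$, so that only the finite $-\pi^2/6$ survives as the coefficient of $\L$, with the surviving $(\ln\varepsilon)^2$-divergence cleanly isolated inside the $(\L+\R)$-coefficient. This is a short polylog manipulation, but it relies on the branch conventions used consistently since Remark \ref{rem:explicit parallel transports}.
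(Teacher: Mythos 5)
Your proposal is correct and follows essentially the same route as the paper: the degree count isolates the $r=0$, $k=1$, $j=0$ summand of $f_\mathbf{V}(1)$, the $\Omega_1^2$ integral is evaluated via \eqref{eq:int_a^b Omega_1^k}, and the $\Omega_0\Omega_1$ integral tends to $-\zeta(2)=-\pi^2/6$ as in \eqref{eq:int_0^1 Omega_0 Omega_1=-pi^2/6}, with the leading term of $W^{\P_\mathbf{V}}=f_\mathbf{V}(1)W^{c_\mathbf{V}\circ\iota}$ unaffected by the parallel transport factor since $f_\mathbf{V}$ starts at second order. Your explicit check that the $\ln(1-\varepsilon)$ cross-terms vanish is a welcome (if routine) elaboration of the paper's one-line citation, not a different method.
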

\begin{proof}
Simply use \eqref{eq:int_a^b Omega_1^k} and \eqref{eq:int_0^1 Omega_0 Omega_1=-pi^2/6}
\end{proof}
\subsubsection{Deriving the 2-holonomy through the globularity condition}\label{subsub:derivation via glob cond of 1st 2hol}
We can indirectly determine the limit $\lim_{\varepsilon\to0}\left[f_\mathbf{V}(1)W^{c_\mathbf{V}\circ\iota}\right]$ by considering the globularity condition \eqref{eq:globularity condition} and comparing the limit of the parallel transports in $W^{\P_\mathbf{V}}:W^{p_\mathbf{V}}\Rrightarrow W^{p^1_\downarrow}\left(W^{c_\mathbf{V}}\right)^{-1}$, i.e., 
\begin{equation}\label{eq:glob of 1st 2hol}
\lim_{\varepsilon\to0}(W^{\P_\mathbf{V}}):\varepsilon^{t_{13}}\Phi_{213}\varepsilon^{-t_{12}}\Rrightarrow\varepsilon^\Lambda\varepsilon^{\overline{t_{13}}}\Phi(t_{12},\overline{t_{13}})\varepsilon^{-t_{12}}\quad.
\end{equation}
There are two modifications that contribute to \eqref{eq:glob of 1st 2hol}: one from $\varepsilon^{t_{13}}$ to $\varepsilon^\Lambda\varepsilon^{\overline{t_{13}}}$ and another from $\Phi_{213}$ to $\Phi(t_{12},\overline{t_{13}})$. In order to explicitly express each of these modifications, we will need the following lemma.
\begin{lem}
Given $n\in\bbN$ and elements $A$ and $B$ of a unital associative algebra, we have:
\begin{subequations}
\begin{alignat}{6}
(A+B)^n=&\,A^n+\sum_{m=0}^{n-1}(A+B)^{n-1-m}BA^m\quad&&,\label{eq:(A+B)^n easy}\\
=&\sum_{p=0}^n\binom{n}{p}A^{n-p}B^p+\sum_{j=1}^{n-1}\sum_{k=0}^{n-1-j}\binom{n-j}{k}(A+B)^{j-1}[B,A^{n-j-k}]B^k\quad&&.\label{eq:(A+B)^n hard}
\end{alignat}
\end{subequations}
\end{lem}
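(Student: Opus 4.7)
The first identity is a telescoping argument. Setting $X_m := (A+B)^{n-m} A^m$ for $0 \leq m \leq n$ gives $X_0 = (A+B)^n$, $X_n = A^n$, and
\[
X_m - X_{m+1} = (A+B)^{n-1-m}\bigl[(A+B) - A\bigr] A^m = (A+B)^{n-1-m} B A^m,
\]
so summing from $m = 0$ to $n-1$ collapses the left-hand side to $(A+B)^n - A^n$, which is equivalent to \eqref{eq:(A+B)^n easy}.

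For the second identity I would induct on $n$, with the base cases $n = 0, 1$ immediate since the commutator double-sum is empty. For the inductive step, I would write $(A+B)^n = (A+B)(A+B)^{n-1}$ and substitute the inductive hypothesis for $(A+B)^{n-1}$. Distributing $A+B$ into the binomial portion $\sum_{p=0}^{n-1} \binom{n-1}{p} A^{n-1-p} B^p$ yields an $A$-piece already in binomial form, and a $B$-piece to which I would apply the commutator expansion $B A^{n-1-p} = A^{n-1-p} B + [B, A^{n-1-p}]$. The two ``binomial'' contributions then merge via Pascal's rule $\binom{n-1}{p} + \binom{n-1}{p-1} = \binom{n}{p}$ to reconstruct $\sum_{p=0}^n \binom{n}{p} A^{n-p} B^p$, while the commutator remainders --- after discarding the vanishing $[B, A^0] = 0$ term --- give exactly the $j = 1$ slice of the target correction sum. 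Distributing $A+B$ into the commutator double-sum from the inductive hypothesis raises each $(A+B)^{j-1}$ to $(A+B)^{j}$, and reindexing $j \mapsto j+1$ reproduces the $j \geq 2$ portion of the target; the inner range $k = 0, \ldots, n-1-j$ and the outer range $j = 2, \ldots, n-1$ match on the nose.

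The main obstacle is purely combinatorial bookkeeping: tracking the reindexings and verifying that the boundary contributions ($p = 0, n$ in the binomial piece; the upper limit $k = n-1-j$; and the outer boundaries $j = 1, n-1$) line up correctly, and that the trivially vanishing $[B, A^0] = 0$ term really absorbs the mismatch generated when one extends the binomial sums via Pascal's identity. No conceptual ingredient is required beyond the three elementary facts used above --- the commutator expansion, Pascal's rule, and the triviality of $[B, A^0]$ --- so once the indexing is organised the proof is complete.
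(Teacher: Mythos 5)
Your argument is correct. For \eqref{eq:(A+B)^n easy} your telescoping sum over $X_m:=(A+B)^{n-m}A^m$ is essentially the paper's induction repackaged (the paper verifies the same collapse step by step via the inductive computation of $(A+B)^{n+1}=(A+B)(A+B)^n$), so there is no real difference there, and your phrasing is if anything slightly cleaner. For \eqref{eq:(A+B)^n hard} you do take a genuinely different route: the paper expands $(A+B)^n$ directly as a sum over noncommutative words $A^{1-i_1}B^{i_1}\cdots A^{1-i_n}B^{i_n}$, decomposes each word by the position $j$ of the leftmost correction arising when the $B$'s are pushed past the remaining $A$'s, and then resums the free indices into the factors $(A+B)^{j-1}$ and $\binom{n-j}{k}$, discarding the vanishing $k=n-j$ term at the end; you instead induct on $n$, using $BA^{n-1-p}=A^{n-1-p}B+[B,A^{n-1-p}]$, Pascal's rule to rebuild $\sum_p\binom{n}{p}A^{n-p}B^p$, the discarded $[B,A^0]=0$ term to obtain the $j=1$ slice, and the shift $j\mapsto j+1$ on the inherited double sum for the $j\geq 2$ slices. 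I checked the bookkeeping you describe (boundary terms $p=0,n$, the reindexed inner range $k=0,\ldots,n-1-j$, and the outer range $j=2,\ldots,n-1$) and it does close up exactly as claimed. The paper's one-shot combinatorial decomposition is shorter once one trusts the per-word identity, while your induction is more elementary and each step is mechanically verifiable; both are perfectly adequate proofs of the lemma.
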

\begin{proof}
Clearly \eqref{eq:(A+B)^n easy} is true for $n=0$ hence let us suppose it is true for $n\in\bbN^*$ then prove the inductive step by computing $(A+B)^{n+1}=(A+B)(A+B)^n$ as
\begin{subequations}
\begin{equation}
(A+B)\left[A^n+\sum_{m=0}^{n-1}(A+B)^{n-1-m}BA^m\right]=A^{n+1}+BA^n+\sum_{m=0}^{n-1}(A+B)^{n-m}BA^m.
\end{equation}
The second equation \eqref{eq:(A+B)^n hard} is more difficult to prove, we start by expanding the LHS as 
\begin{equation}\label{eq:noncommutative binomial expansion}
(A+B)^n=\sum_{i_1,\ldots,i_n=0}^1A^{1-i_1}B^{i_1}\cdots A^{1-i_n}B^{i_n}\quad.
\end{equation}
The summand can be rewritten as 
\begin{equation}
A^{n-i_1-\ldots-i_n}B^{i_1+\ldots+i_n}+\sum_{j=1}^{n-1}A^{1-i_1}B^{i_1}\cdots A^{1-i_j}[B^{i_j},A^{n-j-i_{j+1}-\ldots-i_n}]B^{i_{j+1}+\ldots+i_n}\quad.
\end{equation}
Imposing the summation constraints on the first term gives 
\begin{equation}
\sum_{i_1,\ldots,i_n=0}^1A^{n-i_1-\ldots-i_n}B^{i_1+\ldots+i_n}=\sum_{p=0}^n\binom{n}{p}A^{n-p}B^p
\end{equation}
whereas, for the second term to be nontrivial, we need $i_j\neq0$ thus we have
\begin{align}
&\sum_{j=1}^{n-1}\sum_{i_1,\ldots,i_{j-1}=0}^1A^{1-i_1}B^{i_1}\cdots A^{1-i_{j-1}}B^{i_{j-1}}\sum_{i_{j+1},\ldots,i_n=0}^1[B,A^{n-j-i_{j+1}-\ldots-i_n}]B^{i_{j+1}+\ldots+i_n}\nn\\&=\sum_{j=1}^{n-1}(A+B)^{j-1}\sum_{k=0}^{n-j}\binom{n-j}{k}[B,A^{n-j-k}]B^k\quad.
\end{align}
\end{subequations}
$k=n-j\implies[B,A^{n-j-k}]=0$ so $\sum_{k=0}^{n-j}\binom{n-j}{k}[B,A^{n-j-k}]B^k=\sum_{k=0}^{n-j-1}\binom{n-j}{k}[B,A^{n-j-k}]B^k$.
\end{proof}
\begin{cor}\label{cor:BCH-type mod}
For a strict infinitesimal 2-braiding $t$, we have 
\begin{equation}\label{eq:corollary of binom expansion}
\sum_{n=2}^\infty\frac{(\ln\varepsilon)^n}{n!}\sum_{j=1}^{n-1}\sum_{k=0}^{n-j-1}\sum_{l=1}^{n-j-k}\binom{n-j}{k}t_{13}^{j-1}\Lambda^{l-1}(\L+\R)\Lambda^{n-j-k-l}\overline{t_{13}}^k:\varepsilon^\Lambda\varepsilon^{\overline{t_{13}}}\Rrightarrow\varepsilon^{t_{13}}.
\end{equation}
\end{cor}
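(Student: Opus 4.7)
The plan is to apply the noncommutative binomial identity \eqref{eq:(A+B)^n hard} of the preceding lemma with $A := \Lambda$ and $B := \overline{t_{13}}$, so that $A + B = t_{13}$. This expresses $t_{13}^n$ as $\sum_{p=0}^n \binom{n}{p}\Lambda^{n-p}\overline{t_{13}}^p$ plus a correction term built from the commutators $[\overline{t_{13}}, \Lambda^{n-j-k}]$, and the bulk of the argument will be to identify this correction with the coboundary of the proposed modification.

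First, I would establish the key identity $[\Lambda, \overline{t_{13}}] = \partial(\L + \R)$ in $\PN_{\otimes^2}$. Since $\overline{t_{13}} = -t_{12} - t_{23}$, expansion gives $[\Lambda, \overline{t_{13}}] = [t_{12}+t_{23}, t_{13}]$, while strictness of $t$ together with \eqref{eq:strict t notation} yields $\partial\L = [t_{12}, t_{13}+t_{23}]$ and $\partial\R = [t_{23}, t_{12}+t_{13}]$; summing, the cross-terms $[t_{12},t_{23}] + [t_{23},t_{12}]$ cancel, leaving $\partial(\L+\R) = [t_{12}+t_{23}, t_{13}]$ on the nose. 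Next I would expand $[\overline{t_{13}}, \Lambda^{n-j-k}]$ via the telescoping identity $[B, A^m] = \sum_{l=1}^{m} A^{l-1}[B,A]A^{m-l}$, which produces the third summation index $l$ of the statement.

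Substituting these two expansions into \eqref{eq:(A+B)^n hard} and isolating $\sum_{p=0}^n \binom{n}{p}\Lambda^{n-p}\overline{t_{13}}^p - t_{13}^n$ on one side gives the $n$-th coefficient identity. Weighting by $\frac{(\ln\varepsilon)^n}{n!}$ and summing over $n$ identifies the left-hand side with $\varepsilon^\Lambda\varepsilon^{\overline{t_{13}}} - \varepsilon^{t_{13}}$; the terms $n = 0, 1$ contribute zero since $t_{13} = \Lambda + \overline{t_{13}}$, which explains the lower bound $n \geq 2$ in the claim. Finally, pulling $\partial$ outside by its $R$-linearity (Construction \ref{con:add mods}) and its compatibility with pre/post-whiskering by pseudonatural transformations yields precisely the coboundary of the modification appearing in the statement; by the sign convention \eqref{eq:coboundary pseudonat}, this matches the orientation $\varepsilon^\Lambda\varepsilon^{\overline{t_{13}}} \Rrightarrow \varepsilon^{t_{13}}$.

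There is no deep obstacle: the main thing to watch is the bookkeeping of signs in the telescoping identity, so that $[\overline{t_{13}}, \Lambda]$ versus $[\Lambda, \overline{t_{13}}]$ lines up with the sign arising from $\partial(\L+\R)$, together with the indexing of the triple sum, so that the whiskerings $t_{13}^{j-1}\Lambda^{l-1}$ on the left and $\Lambda^{n-j-k-l}\overline{t_{13}}^k$ on the right correctly assemble the source and target of each summand.
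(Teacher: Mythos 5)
Your proposal is correct and follows essentially the same route as the paper's proof: apply the noncommutative binomial identity \eqref{eq:(A+B)^n hard} with $A=\Lambda$, $B=\overline{t_{13}}$, resum the ordered binomial part into $\varepsilon^\Lambda\varepsilon^{\overline{t_{13}}}$, telescope $[\overline{t_{13}},\Lambda^{n-j-k}]$ to produce the index $l$, and replace $[\Lambda,\overline{t_{13}}]=[t_{12}+t_{23},t_{13}]$ by the coboundary of $\L+\R$, with the sign matching the convention $\partial\Xi=\xi-\xi'$. The only cosmetic difference is that you derive $\partial(\L+\R)=[t_{12}+t_{23},t_{13}]$ directly from \eqref{eqn:2ndDK relations} and strictness, where the paper cites \cite[Lemma 5.6]{Me}; this is immaterial.
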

\begin{proof}
We use \eqref{eq:(A+B)^n hard} as follows,
\begin{footnotesize}
\begin{align}
\varepsilon^{t_{13}}=\sum_{n=0}^\infty\frac{(\ln\varepsilon)^n}{n!}[\Lambda+\overline{t_{13}}]^n=\sum_{n=0}^\infty\frac{(\ln\varepsilon)^n}{n!}\left[\sum_{p=0}^n\binom{n}{p}\Lambda^{n-p}\overline{t_{13}}^p+\sum_{j=1}^{n-1}\sum_{k=0}^{n-1-j}\binom{n-j}{k}t_{13}^{j-1}[\overline{t_{13}},\Lambda^{n-j-k}]\overline{t_{13}}^k\right]\,.
\end{align}
\end{footnotesize}
The first term gives $\sum_{n=0}^\infty\frac{(\ln\varepsilon)^n}{n!}\sum_{p=0}^n\binom{n}{p}\Lambda^{n-p}\overline{t_{13}}^p=\sum_{q=0}^\infty\frac{(\ln\varepsilon)^q}{q!}\Lambda^q\sum_{p=0}^\infty\frac{(\ln\varepsilon)^p}{p!}\overline{t_{13}}^p=\varepsilon^\Lambda\varepsilon^{\overline{t_{13}}}$. For the second term, to have $1\leq n-1$ we need $2\leq n$ and we expand the commutator bracket as 
\begin{equation}
[\overline{t_{13}},\Lambda^{n-j-k}]=\sum_{l=1}^{n-j-k}\Lambda^{l-1}[\overline{t_{13}},\Lambda]\Lambda^{n-j-k-l}=\sum_{l=1}^{n-j-k}\Lambda^{l-1}[t_{13},t_{12}+t_{23}]\Lambda^{n-j-k-l}\,.
\end{equation}
Finally, \cite[Lemma 5.6]{Me} implies that, for a strict $t$, we have $\L+\R:[t_{12}+t_{23},t_{13}]\Rrightarrow0$.
\end{proof}
\begin{rem}\label{rem:LM Drinfeld series}
Recall from \cite[Proposition XIX.6.4]{Kassel} and \cite[Theorem A.9]{LeMu} that we can explicitly express Drinfeld's KZ associator series $\Phi_\KZ(A,B)$ in terms of the MZVs \eqref{eq:iterated integral expression for multiple zeta} as follows. Given a non-empty tuple $\p=(p_1,\ldots,p_{\tilde{\p}})$ of natural numbers, $\p>0$ means that every entry is strictly positive, i.e., $p_1,\ldots,p_{\tilde{\p}}>0$. Given another tuple of natural numbers $\j$, by $0\leq\j\leq\p$ we mean that $\j=(j_1,\ldots,j_{\tilde{\p}})$ and $0\leq j_i\leq p_i$ for all $1\leq i\leq\tilde{\p}$. We define $|\p|:=\sum_{l=1}^{\tilde{\p}}p_l$ and, for $\q>0$ such that $\tilde{\q}=\tilde{\p}$,  
\begin{equation}\label{eq:compact zeta notation}
\zeta_\j^{\p,\q}:=(-1)^{|\j|+|\p|}\zeta(p_1+1,\{1\}^{q_1-1},\ldots,p_{\tilde{\p}}+1,\{1\}^{q_{\tilde{\p}}-1})\prod_{l=1}^{\tilde{\p}}\binom{p_l}{j_l}\quad.
\end{equation} 
We now have the compact expression for Drinfeld's KZ associator series,
\begin{equation}\label{eq:LM expression for Phi}
\Phi=1+\sum_{\{\p,\q>0\,|\,\tilde{\p}=\tilde{\q}\}}\sum_{\begin{smallmatrix}0\leq\j\leq\p\\0\leq\k\leq\q\end{smallmatrix}}\zeta_\j^{\p,\q}\left[\prod_{l=1}^{\tilde{\p}}(-1)^{k_l}\binom{q_l}{k_l}\right]B^{|\q|-|\k|}A^{j_1}B^{k_1}\cdots A^{j_{\tilde{\p}}}B^{k_{\tilde{\p}}}A^{|\p|-|\j|}\,.
\end{equation}
\end{rem}
We can carry out the $\k$-sum appearing in \eqref{eq:LM expression for Phi} by using the following lemma.
\begin{lem}\label{lem:ad^q}
For $q\in\bbN$,
\begin{equation}\label{eq:ad^q_B(A)}
\ad^q_B(A)=\sum_{k=0}^q\binom{q}{k}(-1)^kB^{q-k}AB^k\quad.
\end{equation}
\end{lem}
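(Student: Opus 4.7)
The identity is a classical one and there are two natural approaches; I would pick whichever reads more cleanly in the surrounding context, probably the operator-theoretic one since it is essentially a one-line argument.

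The cleanest approach is to introduce the left- and right-multiplication operators $L_B$ and $R_B$ defined by $L_B(X) := BX$ and $R_B(X) := XB$. The key observation is that these two operators commute, $L_B R_B = R_B L_B$, because the underlying algebra is associative. Since $\ad_B = L_B - R_B$, the ordinary binomial theorem applies to the operator $\ad_B^q = (L_B - R_B)^q$, yielding
\begin{equation*}
\ad_B^q = \sum_{k=0}^q \binom{q}{k}(-1)^k L_B^{q-k} R_B^k\quad.
\end{equation*}
Evaluating both sides on $A$ and noting that $L_B^{q-k} R_B^k(A) = B^{q-k} A B^k$ gives the claim.

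Alternatively, a direct induction on $q$ works. The base case $q=0$ is trivial since both sides equal $A$. For the inductive step, assuming the formula for $q$, one computes
\begin{equation*}
\ad_B^{q+1}(A) = B\,\ad_B^q(A) - \ad_B^q(A)\,B = \sum_{k=0}^q \binom{q}{k}(-1)^k B^{q+1-k}AB^k - \sum_{k=0}^q \binom{q}{k}(-1)^k B^{q-k}AB^{k+1}\,,
\end{equation*}
then reindexes the second sum by $k \mapsto k-1$ and applies Pascal's identity $\binom{q}{k} + \binom{q}{k-1} = \binom{q+1}{k}$ to combine the middle terms, together with treating the $k=0$ and $k=q+1$ boundary terms separately.

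There is no real obstacle here; the lemma is entirely formal and holds in any unital associative algebra. The only care needed is to keep track of the sign conventions and the boundary terms in the induction (or, equivalently, to verify commutativity of $L_B$ and $R_B$, which is immediate from associativity). Given how short either proof is, I would favour the operator-theoretic version for elegance, but the induction is arguably more self-contained and consistent with the style of the preceding Corollary \ref{cor:BCH-type mod}.
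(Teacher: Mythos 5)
Both of your arguments are correct. Your induction sketch is essentially the paper's own proof: the paper verifies $q=0$, computes $\ad_B^{q+1}(A)=B\,\ad_B^q(A)-\ad_B^q(A)\,B$, isolates the boundary terms, reindexes the second sum, and applies Pascal's identity $\binom{q+1}{k}=\binom{q}{k-1}+\binom{q}{k}$ — exactly the steps you describe. Your preferred operator-theoretic route is a genuinely different (and slightly slicker) argument: writing $\ad_B=L_B-R_B$ with $L_B$ and $R_B$ commuting by associativity lets the ordinary binomial theorem do all the work, eliminating the reindexing and boundary-term bookkeeping entirely; what it costs is only the small conceptual detour of introducing the multiplication operators, which the paper avoids by staying with the bare inductive computation (and which keeps its style uniform with the neighbouring binomial lemmas proved the same way). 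Either proof is complete; there is no gap.
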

\begin{proof}
It is clearly true for $q=0$ hence assume it is true for $q\in\bbN^*$ and compute the inductive step $\ad^{q+1}_B(A)=B\ad^q_B(A)-\ad^q_B(A)B$ as
\begin{align}
&\sum_{k=0}^q\binom{q}{k}(-1)^kB^{q+1-k}AB^k-\sum_{k=0}^q\binom{q}{k}(-1)^kB^{q-k}AB^{k+1}\nn\\&=B^{q+1}A+\sum_{k=1}^q\left[\binom{q}{k}+\binom{q}{k-1}\right](-1)^kB^{q+1-k}AB^k+(-1)^{q+1}AB^{q+1}\label{eq:ad^q inductive step}
\end{align}
where we simply isolated $k=0$ of the first sum and $k=q$ of the second sum, then reindexed the remainder of the second sum. However, \eqref{eq:ad^q inductive step} is equal to $\sum_{k=0}^{q+1}\binom{q+1}{k}(-1)^kB^{q+1-k}AB^k$ because, for $1\leq k\leq q$, we have $\binom{q+1}{k}=\binom{q}{k-1}+\binom{q}{k}$.
\end{proof}
\begin{cor}
Drinfeld's KZ associator series \eqref{eq:LM expression for Phi} can be rewritten as
\begin{equation}\label{eq:Phi with ad form}
\Phi=1+\sum_{\{\p,\q>0\,|\,\tilde{\p}=\tilde{\q}\}}\sum_{0\leq\j\leq\p}\zeta_\j^{\p,\q}\ad^{q_{\tilde{\p}}}_B\Big(\ad_B^{q_{{\tilde{\p}}-1}}\Big[\cdots\left(\ad^{q_2}_B\big[\ad^{q_1}_B(A^{j_1})A^{j_2}\big]A^{j_3}\right)\cdots\Big]A^{j_{\tilde{\p}}}\Big)A^{|\p|-|\j|}.
\end{equation}
\end{cor}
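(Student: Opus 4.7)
The plan is to carry out the inner $\k$-sum in \eqref{eq:LM expression for Phi} explicitly, showing that repeated application of Lemma \ref{lem:ad^q} collapses it into the iterated adjoint action of \eqref{eq:Phi with ad form}. The crucial observation is that any two powers of the single element $B$ commute, so the prefactor may be split as
$$B^{|\q|-|\k|}\,=\,B^{q_{\tilde{\p}}-k_{\tilde{\p}}}\,B^{q_{\tilde{\p}-1}-k_{\tilde{\p}-1}}\cdots B^{q_1-k_1},$$
and each $B^{q_l-k_l}$ can be paired with the companion $B^{k_l}$ already flanking $A^{j_l}$ on the right.

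First I would handle the base case $\tilde{\p}=1$: Lemma \ref{lem:ad^q} applied with $A$ replaced by $A^{j_1}$ and $q=q_1$ gives the $k_1$-sum as $\ad^{q_1}_B(A^{j_1})$ directly. Then I would induct on $\tilde{\p}$. Assuming the formula holds at length $\tilde{\p}-1$, I apply $\ad^{q_{\tilde{\p}}}_B\big((\,\cdot\,)A^{j_{\tilde{\p}}}\big)$ to the inductive expression and invoke Lemma \ref{lem:ad^q} once more to peel off the outermost layer; the new prefactor $B^{q_{\tilde{\p}}-k_{\tilde{\p}}}$ commutes through the inner $B$-block and combines with it to reassemble $B^{|\q|-|\k|}$ on the far left of $A^{j_1}$, while the new signs $(-1)^{k_{\tilde{\p}}}$ and binomial $\binom{q_{\tilde{\p}}}{k_{\tilde{\p}}}$ join the existing product. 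Substituting the result back into \eqref{eq:LM expression for Phi} gives \eqref{eq:Phi with ad form}.

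The main obstacle is purely bookkeeping: one must keep careful track of which $k_l$ is summed at which inductive step and verify that the factorisation of $B^{|\q|-|\k|}$ remains consistent when the outermost adjoint is stripped. No sign subtleties appear beyond those already present in Lemma \ref{lem:ad^q}, and no hypothesis on $A$ and $B$ is required. In particular, the corollary is a purely combinatorial rewriting of Drinfeld's KZ associator and is independent of the four-term relations, strictness, coherency, or any other feature of the infinitesimal 2-braiding context.
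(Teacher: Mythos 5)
Your proposal is correct and matches the paper's intended argument: the corollary is obtained precisely by carrying out the $\k$-sum in \eqref{eq:LM expression for Phi} via repeated (nested) application of Lemma \ref{lem:ad^q}, using that powers of $B$ commute so the prefactor $B^{|\q|-|\k|}$ reassembles from the stripped factors $B^{q_l-k_l}$. Your explicit induction on $\tilde{\p}$ is just a careful bookkeeping of this same collapse, so there is no substantive difference from the paper's route.
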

\begin{constr}\label{con:mod from Phi_213}
For $\Phi_{213}=\Phi(t_{12},t_{13})$, we rewrite (part of) the summand of \eqref{eq:Phi with ad form} as
\begin{subequations}
\begin{equation}\label{eq:ad R...ad R(1)}
\ad^{q_{\tilde{\p}}}_{t_{13}}\left(\ad_{t_{13}}^{q_{{\tilde{\p}}-1}}\left[\cdots\big(\ad^{q_2}_{t_{13}}\big[\ad^{q_1}_{t_{13}}(t_{12}^{j_1})t_{12}^{j_2}\big]t_{12}^{j_3}\big)\cdots \right]t_{12}^{j_{\tilde{\p}}}\right)=\ad^{q_{\tilde{\p}}}_{t_{13}}R_{t_{12}}^{j_{\tilde{\p}}}\cdots\ad^{q_1}_{t_{13}}R_{t_{12}}^{j_1}(1)
\end{equation}
where $R_A$ simply means right multiplication by $A$. We use \eqref{eq:(A+B)^n easy} twice: first to write 
\begin{equation}
\ad_{t_{13}}^q=\ad_{\overline{t_{13}}}^q+\sum_{m=0}^{q-1}\ad_{t_{13}}^{q-1-m}\ad_\Lambda\ad_{\overline{t_{13}}}^m
\end{equation}
and secondly to rewrite the string of operators in the RHS of \eqref{eq:ad R...ad R(1)} as
\begin{equation}\label{eq:expanded summand for phi_213}
\prod_{l=0}^{\tilde{\p}-1}\ad^{q_{\tilde{\p}-l}}_{\overline{t_{13}}}R_{t_{12}}^{j_{\tilde{\p}-l}}+\sum_{l=1}^{\tilde{\p}}\prod_{g=0}^{\tilde{\p}-l-1}\ad^{q_{\tilde{\p}-g}}_{t_{13}}R_{t_{12}}^{j_{\tilde{\p}-g}}\sum_{m=0}^{q_l-1}\ad_{t_{13}}^{q_l-m-1}\ad_\Lambda\ad_{\overline{t_{13}}}^mR_{t_{12}}^{j_l}\prod_{g=\tilde{\p}-l+1}^{\tilde{\p}-1}\ad^{q_{\tilde{\p}-g}}_{\overline{t_{13}}}R_{t_{12}}^{j_{\tilde{\p}-g}}.
\end{equation}
Clearly the first term of \eqref{eq:expanded summand for phi_213} gives us $\Phi(t_{12},\overline{t_{13}})$ whereas the second term produces the modification that we are looking for. Specifically, we want to consider: for $0\leq k_l\leq m$, $j_0:=0$, and $k_0:=m-k_l+\sum_{i=1}^{l-1}q_i-k_i$, the commutator bracket $\left[\Lambda,\prod_{g=0}^l t_{12}^{j_g}\overline{t_{13}}^{k_g}\right]$ as 
\begin{align}
\sum_{i=0}^l\prod_{g=0}^{i-1}t_{12}^{j_g}\overline{t_{13}}^{k_g}\left[\sum_{n=1}^{j_i}t_{12}^{n-1}[\Lambda,t_{12}]t_{12}^{j_i-n}\overline{t_{13}}^{k_i}+t_{12}^{j_i}\sum_{n=1}^{k_i}\overline{t_{13}}^{n-1}[\Lambda,t_{13}]\overline{t_{13}}^{k_i-n}\right]\prod_{g=i+1}^lt_{12}^{j_g}\overline{t_{13}}^{k_g}.
\end{align}
We thus have the following modification from $\Phi_{213}$ to $\Phi(t_{12},\overline{t_{13}})$, 
\begin{align}
\nn&\sum_{\{\p,\q>0\,|\,\tilde{\p}=\tilde{\q}\}}\sum_{\begin{smallmatrix}0\leq\j\leq\p\\1\leq l\leq\tilde{\p} 
\end{smallmatrix}}\zeta_\j^{\p,\q}\prod_{g=0}^{\tilde{\p}-l-1}\ad^{q_{\tilde{\p}-g}}_{t_{13}}R_{t_{12}}^{j_{\tilde{\p}-g}}\sum_{m=0}^{q_l-1}\ad_{t_{13}}^{q_l-m-1}\Big(\prod_{r=1}^{l-1}\sum_{k_r=0}^{q_r}(-1)^{k_r}\binom{q_r}{k_r}\sum_{k_l=0}^m(-1)^{k_l}\binom{m}{k_l}\sum_{i=0}^l\\&\prod_{g=0}^{i-1}t_{12}^{j_g}\overline{t_{13}}^{k_g}\left[t_{12}^{j_i}\sum_{n=1}^{k_i}\overline{t_{13}}^{n-1}(\L+\R)\overline{t_{13}}^{k_i-n}-\sum_{n=1}^{j_i}t_{12}^{n-1}\L t_{12}^{j_i-n}\overline{t_{13}}^{k_i}\right]\prod_{g=i+1}^lt_{12}^{j_g}\overline{t_{13}}^{k_g}\Big)t_{12}^{|\p|-|\j|}\label{eq:mod from phi_213}
\end{align}
\end{subequations}
\end{constr}
We denote the modifications \eqref{eq:corollary of binom expansion} and \eqref{eq:mod from phi_213} as $\int\{\varepsilon^\Lambda\varepsilon^{\overline{t_{13}}}-\varepsilon^{t_{13}}\}$ and $\int\{\Phi_{213}-\Phi(t_{12},\overline{t_{13}})\}$, respectively. We thus have
\begin{equation}
\lim_{\varepsilon\to0}(W^{\P_\mathbf{V}})=\varepsilon^{t_{13}}\int\{\Phi_{213}-\Phi(t_{12},\overline{t_{13}})\}\varepsilon^{-t_{12}}-\int\{\varepsilon^\Lambda\varepsilon^{\overline{t_{13}}}-\varepsilon^{t_{13}}\}\Phi(t_{12},\overline{t_{13}})\varepsilon^{-t_{12}}
\end{equation}
%%%%%%%%%%%%%%%%%%%%%%%%%%%%%%%%%%%%%%%%%%%%%%%%%%%%%%%%%%%%

\subsubsection*{The second vertically-interpolative 2-path}
\begin{equation}\label{eqn:second 2-path}
\P_\mathbf{III}(s,r):=\begin{cases}
      p_\rightarrow(r):=\left(c_\mathbf{III}(2r),a\right), & 0\leq r\leq\frac{s}{2}\\
      p^\downarrow_s(r):=\left(c_\mathbf{III}(s),\frac{1-c_\mathbf{I}(2r-s)}{1-\varepsilon}a\right), & \frac{s}{2}\leq r\leq s\\
      p_\mathbf{III}(r), & s\leq r\leq1
    \end{cases}\,:p_\mathbf{III}\Rightarrow p^\downarrow_1\left(c_\mathbf{III},a\right)\quad.
\end{equation}
\begin{figure}[H]
\centering
\scalebox{0.2}{\includesvg[width=1000pt]{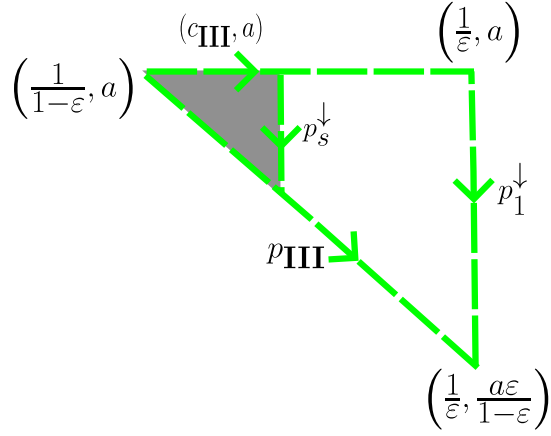}}
\caption{The second vertically-interpolative 2-path}
\label{fig:2ndInterpolative}
\end{figure}
We have $W_{s0}^{c_\mathbf{III}}\equiv{}^{\Gamma(t_{23},\overline{t_{13}})}W_{s0}^{c_\mathbf{I}}$ and $(W_{s0}^{p_\mathbf{III}})^{-1}\equiv\big({}^{\Gamma(t_{23},t_{13})}W_{s0}^{c_\mathbf{I}}\big)^{-1}$ from \cite[second line of (2.33)]{BRW} and \eqref{eq:3rd partra in rem}, respectively. Also $z=c_\mathbf{III}(s)=\frac{1}{1-c_\mathbf{I}(s)}$ gives:
\begin{equation}
\frac{1}{z}\frac{\partial z}{\partial s}=\frac{1}{1-c_\mathbf{I}(s)}\frac{\partial c_\mathbf{I}}{\partial s}\qquad,\qquad\frac{1}{z-1}\frac{\partial z}{\partial s}=\left(\frac{1}{c_\mathbf{I}(s)}+\frac{1}{1-c_\mathbf{I}(s)}\right)\frac{\partial c_\mathbf{I}}{\partial s}\quad.
\end{equation}
Using the same arguments as developed in Subsubsection \ref{subsub:1st 2path}, the 2-holonomy is thus given by $W^{\P_\mathbf{III}}= f_\mathbf{III}(1)W^{c_\mathbf{III}}$ where the function $f_\mathbf{III}(s)$ is defined as
\begin{equation}\label{eq:f_III}
\sum_{\begin{smallmatrix}r=0\\k=1\end{smallmatrix}}^\infty\,\sum_{i_1,\ldots,i_r=0}^1\,\sum_{j=0}^{k-1}\int_\varepsilon^{c_\mathbf{I}(s)}\prod_{m=1}^r\Omega_{i_m}\ad_{t_{23}}^{1-i_m}(\ad_{\overline{t_{13}}}+\Lambda)^{i_m}\Lambda^j\left[\Omega_0\Omega_1^k\R-\Omega_1^{k+1}(\L+\R)\right]\Lambda^{k-1-j}.
\end{equation}

\begin{cor}\label{cor:h^2 2nd 2hol}
The lowest order term of $\lim_{\varepsilon\to0}W^{\P_\mathbf{III}}$ is given by
\begin{equation}\label{eqn:lowest term 2nd 2hol}
\left(W^{\P_\mathbf{III}}\right)_2\rightarrow\,-\frac{\pi^2}{6}\R-\frac{(\ln\varepsilon)^2}{2}(\L+\R)\quad.
\end{equation}
\end{cor}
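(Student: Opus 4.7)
The strategy is to mirror the proof of Corollary \ref{cor:h^2 1st 2hol}: extract the lowest order contribution from the iterated integral series \eqref{eq:f_III} and evaluate it using the known limits of the relevant iterated integrals in $\Omega_0, \Omega_1$. The lowest order (i.e.\ second order in the implicit deformation parameter $\hbar$) term in $f_\mathbf{III}(1)$ arises from the indices $r=0$, $k=1$, $j=0$, since every factor $\Omega_{i_m}$ in the prefix string carries an extra power of $\hbar$ via the adjoint of a $t_{ij}$, and every additional $k$ beyond $1$ adds an extra power of $\Lambda$ (hence an extra $\hbar$). All other terms are strictly higher order, so they do not contribute to $(W^{\P_\mathbf{III}})_2$.

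With these indices, \eqref{eq:f_III} collapses to
\begin{equation}
(f_\mathbf{III}(1))_2 \;=\; \int_\varepsilon^{c_\mathbf{I}(1)} \Omega_0\,\Omega_1\,\R \;-\; \int_\varepsilon^{c_\mathbf{I}(1)} \Omega_1^{2}\,(\L+\R)\quad,
\end{equation}
and since $c_\mathbf{I}(1)=1-\varepsilon$ we may apply \eqref{eq:int_a^b Omega_1^k} to obtain $\int_\varepsilon^{1-\varepsilon}\Omega_1^{2}\to \frac{(\ln\varepsilon)^2}{2}$ in the limit $\varepsilon\to 0$, together with \eqref{eq:int_0^1 Omega_0 Omega_1=-pi^2/6} to obtain $\int_\varepsilon^{1-\varepsilon}\Omega_0\,\Omega_1\to -\frac{\pi^2}{6}$. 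Since $W^{\P_\mathbf{III}}=f_\mathbf{III}(1)W^{c_\mathbf{III}}$ and $W^{c_\mathbf{III}}$ contributes $1$ at order zero, the result is
\begin{equation}
(W^{\P_\mathbf{III}})_2 \;\to\; -\frac{\pi^2}{6}\R \;-\; \frac{(\ln\varepsilon)^2}{2}(\L+\R)\quad.
\end{equation}

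There is no substantive obstacle here: the structural symmetry between the first and second vertically-interpolative 2-paths (the only change in passing from $f_\mathbf{V}$ to $f_\mathbf{III}$ being the swap of the $\Omega_0\Omega_1^k$ coefficient from $\L$ to $\R$, inherited from the fact that $\frac{1}{z}\partial_s z$ and $\frac{1}{z-1}\partial_s z$ have exchanged roles between the parametrisations of $c_\mathbf{V}\circ\iota$ and $c_\mathbf{III}$) guarantees that the computation in Corollary \ref{cor:h^2 1st 2hol} transfers verbatim with $\L\leftrightarrow\R$ in the $\Omega_0\Omega_1$ coefficient, while the $\Omega_1^2$ coefficient $\L+\R$ is symmetric in the two generators and hence unchanged.
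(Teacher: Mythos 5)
Your proposal is correct and follows exactly the route the paper intends: the paper leaves this corollary without a separate written proof precisely because it is the verbatim transfer of the argument for Corollary \ref{cor:h^2 1st 2hol} (isolate the $r=0$, $k=1$, $j=0$ term of \eqref{eq:f_III}, then apply \eqref{eq:int_a^b Omega_1^k} and \eqref{eq:int_0^1 Omega_0 Omega_1=-pi^2/6}), with $\L\leftrightarrow\R$ in the $\Omega_0\Omega_1$ coefficient as you note. Your structural remark about the exchanged roles of $\tfrac{1}{z}\partial_sz$ and $\tfrac{1}{z-1}\partial_sz$ between the parametrisations is also exactly the mechanism by which the paper obtains \eqref{eq:f_III} from the proof of Proposition \ref{propo:solved 1st 2hol}.
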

\begin{rem}
As in Subsubsection \ref{subsub:derivation via glob cond of 1st 2hol}, we can indirectly determine the limit of the 2-holonomy via the globularity condition;
\begin{equation}\label{eq:glob of 2nd 2hol}
\lim_{\varepsilon\to0}(W^{\P_\mathbf{III}}):\varepsilon^{t_{13}}\Phi_{231}\varepsilon^{-t_{23}}\Rrightarrow\varepsilon^\Lambda\varepsilon^{\overline{t_{13}}}\Phi(t_{23},\overline{t_{13}})\varepsilon^{-t_{23}}\quad.
\end{equation}
In order to compute the modification from $\Phi_{231}$ to $\Phi(t_{23},\overline{t_{13}})$, we need only apply the permutation $(123)\to(321)$ to Construction \ref{con:mod from Phi_213},
\begin{footnotesize}
\begin{align}
\nn&\sum_{\{\p,\q>0\,|\,\tilde{\p}=\tilde{\q}\}}\sum_{\begin{smallmatrix}0\leq\j\leq\p\\1\leq l\leq\tilde{\p} 
\end{smallmatrix}}\zeta_\j^{\p,\q}\prod_{g=0}^{\tilde{\p}-l-1}\ad^{q_{\tilde{\p}-g}}_{t_{13}}R_{t_{23}}^{j_{\tilde{\p}-g}}\sum_{m=0}^{q_l-1}\ad_{t_{13}}^{q_l-m-1}\Big(\prod_{r=1}^{l-1}\sum_{k_r=0}^{q_r}(-1)^{k_r}\binom{q_r}{k_r}\sum_{k_l=0}^m(-1)^{k_l}\binom{m}{k_l}\sum_{i=0}^l\\&\prod_{g=0}^{i-1}t_{23}^{j_g}\overline{t_{13}}^{k_g}\left[t_{23}^{j_i}\sum_{n=1}^{k_i}\overline{t_{13}}^{n-1}(\L+\R)\overline{t_{13}}^{k_i-n}-\sum_{n=1}^{j_i}t_{23}^{n-1}\R t_{23}^{j_i-n}\overline{t_{13}}^{k_i}\right]\prod_{g=i+1}^lt_{23}^{j_g}\overline{t_{13}}^{k_g}\Big)t_{23}^{|\p|-|\j|}\label{eq:mod from phi_231}
\end{align}
\end{footnotesize}
\end{rem}
%%%%%%%%%%%%%%%%%%%%%%%%%%%%%%%%%%%%%%%%%%%%%%%%%%%%%%%%%%

\subsubsection*{The third vertically-interpolative 2-path}
\begin{equation}\label{eq:1st horizontal interpolative}
\P_\mathbf{IV}(s,r):=\begin{cases}
      \big(c_\mathbf{IV}(2r),a\big)\,, & 0\leq r\leq\frac{s}{2}\\
      \left(c_\mathbf{IV}(s),\frac{1-c_\mathbf{I}(2r-s)}{1-\varepsilon}a\right)\,, & \frac{s}{2}\leq r\leq\frac{1+s}{2}\\
      p_\mathbf{IV}(2r-1)\,, & \frac{1+s}{2}\leq r\leq1
    \end{cases}\,:p_\mathbf{IV}p^\downarrow_1\Rightarrow p_\downarrow^1(c_\mathbf{IV},a)\quad.
\end{equation}
\begin{figure}[H]
\centering
\scalebox{0.4}{\includesvg[width=1000pt]{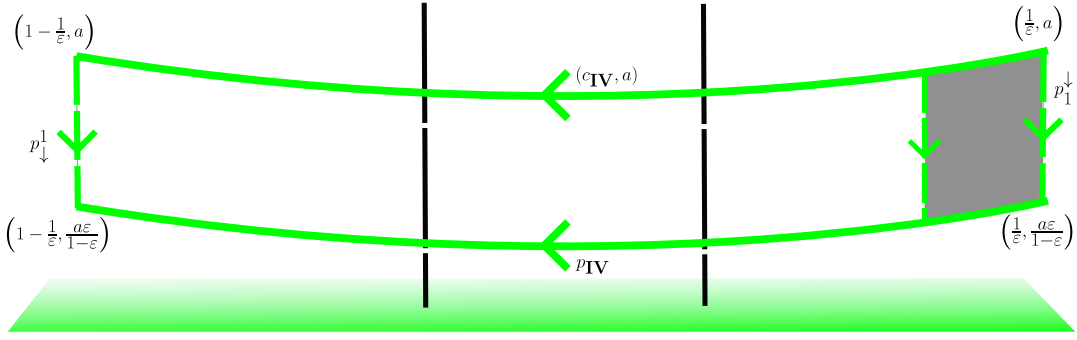}}
\caption{The third vertically-interpolative 2-path}
\label{fig:hor_interpol}
\end{figure}
This time round, we \textit{first} indirectly compute the limit of the 2-holonomy through the globularity condition then show that we can recover the exact series formula by directly calculating the limit of the corresponding iterated integral series formula. 
\begin{rem}\label{rem:1st horinterpol glob condition}
Under $\lim_{\varepsilon\to0}$, the globularity condition \eqref{eq:globularity condition} requires 
\begin{subequations}
\begin{equation}\label{eq:glob cond lim of horinterpol}
\lim_{\varepsilon\to0}(W^{\P_\mathbf{IV}}):e^{i\pi\overline{t_{13}}}\varepsilon^\Lambda\Rrightarrow\varepsilon^\Lambda e^{i\pi\overline{t_{13}}}
\end{equation}
hence we compute
\begin{align}
\left[e^{i\pi\overline{t_{13}}},\varepsilon^\Lambda\right]&=\sum_{m=1}^\infty\sum_{k=1}^\infty\frac{(i\pi)^m(\ln\varepsilon)^k}{m!k!}\big[\overline{t_{13}}^m,\Lambda^k\big]\label{eq:1st determination}\\
&=\sum_{m=1}^\infty\sum_{k=1}^\infty\frac{(i\pi)^m(\ln\varepsilon)^k}{m!k!}\sum_{l=0}^{m-1}\sum_{j=0}^{k-1}\overline{t_{13}}^l\Lambda^j[t_{13},t_{12}+t_{23}]\Lambda^{k-1-j}\overline{t_{13}}^{m-l-1}\nn
\end{align}
where the second equality used the biderivation property of the commutator bracket, thus 
\begin{equation}\label{eq:a priori 2hol of hor interpol}
-\sum_{m=1}^\infty\sum_{k=1}^\infty\frac{(i\pi)^m(\ln\varepsilon)^k}{m!k!}\sum_{l=0}^{m-1}\sum_{j=0}^{k-1}\overline{t_{13}}^l\Lambda^j(\L+\R)\Lambda^{k-1-j}\overline{t_{13}}^{m-l-1}:\left[e^{i\pi\overline{t_{13}}},\varepsilon^\Lambda\right]\Rrightarrow0\quad.
\end{equation}
\end{subequations}
\end{rem}
Again, the arguments developed in Subsubsection \ref{subsub:1st 2path} allow us to immediately see that the 2-holonomy is given by 
\begin{equation}\label{eq:W^(P_IV)=}
W^{\P_\mathbf{IV}}=f_\mathbf{IV}(1)W^{p_\mathbf{IV}}
\end{equation}
where $f_\mathbf{IV}(s)$ is defined as
\begin{equation}\label{eq:f_IV(s)}
\sum_{\begin{smallmatrix}r=0\\k=1\end{smallmatrix}}^\infty\frac{1}{k!}\left(\ln\frac{\varepsilon}{1-\varepsilon}\right)^k\sum_{i_1,\ldots,i_r=0}^1\,\sum_{j=0}^{k-1}\int_{c_\mathbf{IV}(0)}^{c_\mathbf{IV}(s)}\prod_{m=1}^r\Omega_{i_m}\ad_{t_{12}}^{1-i_m}\ad_{t_{23}}^{i_m}\Lambda^j\left[\Omega_0\L+\Omega_1\R\right]\Lambda^{k-1-j}.
\end{equation}
\begin{constr}\label{con:lim 2hol of 3rd verinterpol}
We take $\lim_{\varepsilon\to0}$ of \eqref{eq:W^(P_IV)=}. Recall that $z(s)=c_\mathbf{IV}(s):=\frac{1}{2}+\left(\frac{1}{\varepsilon}-\frac{1}{2}\right)e^{-si\pi}$ hence $\frac{1}{z}\frac{\partial z}{\partial s}\to-i\pi$ and $\frac{1}{z-1}\frac{\partial z}{\partial s}\to-i\pi$ thus $\int_{c_\mathbf{IV}(0)}^{c_\mathbf{IV}(1)}\Omega_{i_1}\cdots\Omega_{i_r}\to\frac{(-i\pi)^r}{r!}$ so
\begin{equation}
f_\mathbf{IV}(1)\to\sum_{\begin{smallmatrix}r=0\\k=1\end{smallmatrix}}^\infty\frac{(-i\pi)^{r+1}(\ln\varepsilon)^k}{(r+1)!k!}\sum_{i_1,\ldots,i_r=0}^1\,\sum_{j=0}^{k-1}\prod_{m=1}^r\ad_{t_{12}}^{1-i_m}\ad_{t_{23}}^{i_m}\Lambda^j(\L+\R)\Lambda^{k-1-j}.
\end{equation}
We now use that $\sum_{i_1,\ldots,i_r=0}^1\ad_{t_{12}}^{1-i_1}\ad_{t_{23}}^{i_1}\cdots\ad_{t_{12}}^{1-i_r}\ad_{t_{23}}^{i_r}=(\ad_{t_{12}}+\ad_{t_{23}})^r=(-1)^r\ad^r_{\overline{t_{13}}}$ together with Lemma \ref{lem:ad^q}. As $\varepsilon\to0$, $W^{\P_\mathbf{IV}}$ becomes
\begin{subequations}\label{subeq:lim 2hol of 3rd verinterpol}
\begin{equation}
-\sum_{\begin{smallmatrix}r=0\\k=1\end{smallmatrix}}^\infty\frac{(i\pi)^{r+1}(\ln\varepsilon)^k}{(r+1)!k!}\sum_{q=0}^r\sum_{j=0}^{k-1}\binom{r}{q}(-1)^{r-q}\overline{t_{13}}^q\Lambda^j(\L+\R)\Lambda^{k-1-j}\overline{t_{13}}^{r-q}\sum_{l=0}^\infty\frac{(i\pi)^l}{l!}\overline{t_{13}}^l.
\end{equation}
We simply reindex $m=r+l+1$ hence $W^{\P_\mathbf{IV}}$ tends to
\begin{footnotesize}
\begin{equation}
\sum_{\begin{smallmatrix}m=1\\k=1\end{smallmatrix}}^\infty\frac{(i\pi)^m(\ln\varepsilon)^k}{m!k!}\sum_{l=0}^{m-1}\sum_{q=0}^{m-l-1}\sum_{j=0}^{k-1}\binom{m}{l}\binom{m-l-1}{q}(-1)^{m-l-q}\overline{t_{13}}^q\Lambda^j(\L+\R)\Lambda^{k-1-j}\overline{t_{13}}^{m-q-1}.
\end{equation}
\end{footnotesize}
\end{subequations}
Let us rewrite the troublesome double sum as $\sum_{l=0}^{m-1}\sum_{q=0}^{m-l-1}=\sum_{q=0}^{m-1}\sum_{l=0}^{m-q-1}$. We will be done if we can show that 
\begin{subequations}\label{subeq:1st alt double binom sum}
\begin{equation}\label{eq:alternating double binom sum}
\sum_{l=0}^{m-q-1}\binom{m}{l}\binom{m-l-1}{q}(-1)^l=(-1)^{m-q-1}
\end{equation}
so let us consider
\begin{equation}\label{eq:(1-x)^m-q-1=(1-x)^m(1-x)^-q-1}
(1-x)^{m-q-1}=(1-x)^m(1-x)^{-q-1}=\sum_{a=0}^m\binom{m}{a}(-x)^a\sum_{b=0}^\infty\binom{-q-1}{b}(-x)^b\quad.
\end{equation}
\end{subequations}
The generalised binomial coefficient becomes $\binom{-q-1}{b}=\frac{(-q-1)(-q-2)\cdots(-q-b)}{b!}=(-1)^b\binom{q+b}{q}$ and we recover \eqref{eq:alternating double binom sum} by considering the coefficient of $x^{m-q-1}$ in \eqref{eq:(1-x)^m-q-1=(1-x)^m(1-x)^-q-1}.
\end{constr}

\begin{cor}\label{cor:h^2 3rd 2hol}
The lowest order term of $\lim_{\varepsilon\to0}W^{\P_\mathbf{IV}}$ is given by
\begin{equation}\label{eqn:lowest term 3rd 2hol}
\left(\lim_{\varepsilon\to0}W^{\P_\mathbf{IV}}\right)_2=i\pi(\ln\varepsilon)\big(\L+\R\big)\quad.
\end{equation}
\end{cor}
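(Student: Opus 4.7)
The plan is to extract the order-$\hbar^2$ contribution directly from the closed series form of $\lim_{\varepsilon\to0}W^{\P_\mathbf{IV}}$ already established in Construction \ref{con:lim 2hol of 3rd verinterpol}, namely
\begin{equation*}
\sum_{\begin{smallmatrix}m=1\\k=1\end{smallmatrix}}^\infty\frac{(i\pi)^m(\ln\varepsilon)^k}{m!\,k!}\sum_{q=0}^{m-1}\sum_{j=0}^{k-1}\overline{t_{13}}^{\,q}\Lambda^j(\L+\R)\Lambda^{k-1-j}\overline{t_{13}}^{\,m-q-1}
\end{equation*}
(modulo the global sign fixed by the alternating binomial identity \eqref{eq:alternating double binom sum}). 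The key observation is a weight count in the tacit deformation parameter $\hbar$: the relationators $\L,\R\in\Mod_{\otimes^2}^{\Rrightarrow0}$ carry weight $\hbar^2$, whereas $\Lambda$ and $\overline{t_{13}}=t_{13}-\Lambda=-(t_{12}+t_{23})$ each carry weight $\hbar$. Hence the generic summand has total weight $q+j+2+(k-1-j)+(m-q-1)=m+k$, and under the constraint $m,k\geq1$ the minimum value $m+k=2$ is attained uniquely at $m=k=1$.

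At $m=k=1$ the inner index ranges degenerate to $q\in\{0\}$ and $j\in\{0\}$, so that $\overline{t_{13}}^{\,q}=\overline{t_{13}}^{\,m-q-1}=1$ and $\Lambda^j=\Lambda^{k-1-j}=1$; the double binomial identity \eqref{eq:alternating double binom sum} reduces to the tautology $\binom{1}{0}\binom{0}{0}=(-1)^0$. The prefactor specialises to $\frac{(i\pi)(\ln\varepsilon)}{1!\,1!}=i\pi\ln\varepsilon$, yielding the claimed $i\pi(\ln\varepsilon)(\L+\R)$ contribution.

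As an independent cross-check, the same coefficient is recoverable \textit{a priori} from the globularity route of Remark \ref{rem:1st horinterpol glob condition}: truncating the double series \eqref{eq:1st determination} for $[e^{i\pi\overline{t_{13}}},\varepsilon^\Lambda]$ to $m=k=1$ leaves $i\pi\ln\varepsilon\,[\overline{t_{13}},\Lambda]=i\pi\ln\varepsilon\,[t_{13},t_{12}+t_{23}]$, and the promotion $\L+\R:[t_{12}+t_{23},t_{13}]\Rrightarrow0$ from \cite[Lemma 5.6]{Me} reproduces the stated expression. There is no genuine obstacle beyond the degree count and careful bookkeeping of signs; the content of the corollary is that, at lowest order, the interpolative midsection of $\P_\mathbf{IV}$ contributes purely through the semicircular pieces of $c_\mathbf{IV}$, each returning the residue $-i\pi$ as $\varepsilon\to0$.
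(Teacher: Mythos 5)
Your identification of the lowest-order term is the same (implicit) argument the paper intends: an $\hbar$-weight count on the series of Construction \ref{con:lim 2hol of 3rd verinterpol} (equivalently on \eqref{eq:a priori 2hol of hor interpol}), with $\L,\R$ of weight two and $\Lambda,\overline{t_{13}}$ of weight one, so that the unique minimal contribution sits at $m=k=1$ and all index sums degenerate. That part is correct and matches the paper, which states the corollary without a separate proof.

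The gap is the sign, which you explicitly punt on (``modulo the global sign'') and which is the only delicate content of the statement. Restoring it: in the resolved series of Construction \ref{con:lim 2hol of 3rd verinterpol} the summand carries $(-1)^{m-l-q}=(-1)^{m-q}(-1)^{l}$, and performing the $l$-sum with \eqref{eq:alternating double binom sum} gives $(-1)^{m-q}\cdot(-1)^{m-q-1}=-1$; the limit is therefore \emph{exactly} the series \eqref{eq:a priori 2hol of hor interpol}, whose $m=k=1$ term is $-i\pi\ln(\varepsilon)(\L+\R)$, not $+i\pi\ln(\varepsilon)(\L+\R)$. Your globularity cross-check contains the same slip: the $m=k=1$ truncation of \eqref{eq:1st determination} is $i\pi\ln(\varepsilon)\,[t_{13},t_{12}+t_{23}]$, while $\partial(\L+\R)=[t_{12}+t_{23},t_{13}]$, so the modification filling this coboundary is $-i\pi\ln(\varepsilon)(\L+\R)$. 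Note moreover that the value with the minus sign is the one actually used downstream: in \eqref{eqn:hbar^2 of 1st stage 2-hol} the term $W^{\P_\mathbf{IV}}$ enters the decomposition of $W^{\P}$ with a minus sign, and only $\big(W^{\P_\mathbf{IV}}\big)_2=-i\pi\ln(\varepsilon)(\L+\R)$ reproduces the stated $+i\pi\ln(\varepsilon)(\L+\R)$ there. In other words, the printed sign in \eqref{eqn:lowest term 3rd 2hol} appears to be a typo which your proposal inherits by assertion rather than verifies; as written, the proposal does not derive the coefficient's sign, and a careful derivation along either of your two routes yields the opposite sign to the one you claim.
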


\subsubsection{Putting the first three vertically-interpolative 2-paths together}
The \textit{purely horizontal} 2-path $\big([c_\mathbf{I}\,c_\mathbf{VI}\,c_\mathbf{V}]\circ\iota,a\big)\xRightarrow{\P_{v=a}}\big(c_\mathbf{IV}\,c_\mathbf{III}\,c_\mathbf{II},a\big)$ in
\begin{figure}[H]
    \centering
\scalebox{0.4}{\includesvg[width=1000pt]{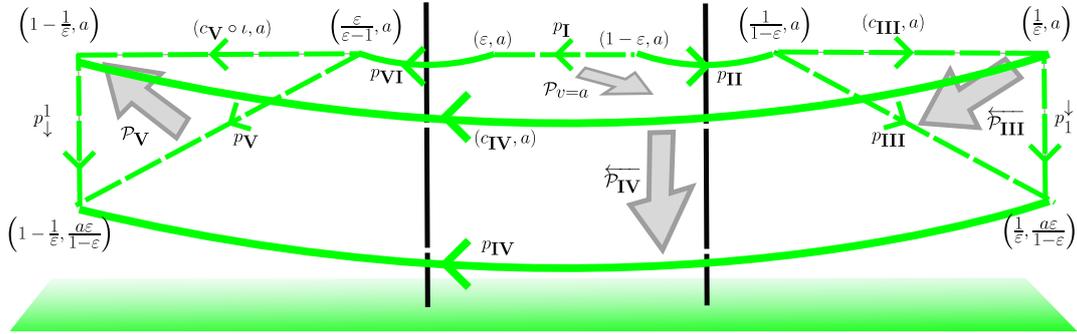}}
    \caption{The concrete 2-path $\P$ as a composite}
    \label{fig:P_2paths}
\end{figure}
is constant in the $v$-dimension thus $\Delta[\partial_s\P_{v=a},\partial_r\P_{v=a}]=0$ and hence $W^{\P_{v=a}}=\mathbf{0}$. We define the total 2-path $\P$ as the following vertical composite,
% https://q.uiver.app/#q=WzAsNSxbMCwwLCJwX1xcbWF0aGJme1Z9XFwscF9cXG1hdGhiZntWSX1cXCxwX1xcbWF0aGJme0l9Il0sWzAsMSwicF4xX1xcZG93bmFycm93KGNfXFxtYXRoYmZ7Vn1cXGNpcmNcXGlvdGEsYSlwX1xcbWF0aGJme1ZJfVxcLHBfXFxtYXRoYmZ7SX0iXSxbMiwxLCJwXjFfXFxkb3duYXJyb3dcXCxwXjBfXFxtYXRoYmZ7SVZ9XFwsKGNfXFxtYXRoYmZ7SUlJfSxhKVxcLHBfXFxtYXRoYmZ7SUl9Il0sWzQsMSwicF9cXG1hdGhiZntJVn1cXCxwXlxcZG93bmFycm93XzFcXCwoY19cXG1hdGhiZntJSUl9LGEpXFwscF9cXG1hdGhiZntJSX0iXSxbNCwwLCJwX1xcbWF0aGJme0lWfVxcLHBfXFxtYXRoYmZ7SUlJfVxcLHBfXFxtYXRoYmZ7SUl9Il0sWzAsNCwiXFxtYXRoY2Fse1B9IiwwLHsibGV2ZWwiOjIsInN0eWxlIjp7ImJvZHkiOnsibmFtZSI6ImRhc2hlZCJ9fX1dLFswLDEsIlxcbWF0aGNhbHtQfV9cXFVwYXJyb3cxX3twX1xcbWF0aGJme1ZJfX0xX3twX1xcbWF0aGJme0l9fSIsMix7ImxldmVsIjoyfV0sWzEsMiwiMV97cF4xX1xcZG93bmFycm93fVxcbWF0aGNhbHtQfV97dj1hfSIsMix7ImxldmVsIjoyfV0sWzIsMywiXFxtYXRoY2Fse1B9X1xcUmlnaHRhcnJvdzFfeyhjX1xcbWF0aGJme0lJSX0sYSl9MV97cF9cXG1hdGhiZntJSX19IiwyLHsibGV2ZWwiOjJ9XSxbMyw0LCIxX3twX1xcbWF0aGJme0lWfX1cXG1hdGhjYWx7UH1fXFxEb3duYXJyb3cxX3twX1xcbWF0aGJme0lJfX0iLDIseyJsZXZlbCI6Mn1dXQ==
\begin{subequations}
\begin{equation}\label{eq:total 2-path P as vertical composite diagram}
\begin{tikzcd}
	{p_\mathbf{V}\,p_\mathbf{VI}\,p_\mathbf{I}} &&&& {p_\mathbf{IV}\,p_\mathbf{III}\,p_\mathbf{II}} \\
	{p^1_\downarrow\,(c_\mathbf{V}\circ\iota,a)\,p_\mathbf{VI}\,p_\mathbf{I}} && {p^1_\downarrow\,p^0_\mathbf{IV}\,(c_\mathbf{III},a)\,p_\mathbf{II}} && {p_\mathbf{IV}\,p^\downarrow_1\,(c_\mathbf{III},a)\,p_\mathbf{II}}
	\arrow["\P", Rightarrow, dashed, from=1-1, to=1-5]
	\arrow["{\P_\mathbf{V}1_{p_\mathbf{VI}\,p_\mathbf{I}}}"', Rightarrow, from=1-1, to=2-1]
	\arrow["{1_{p^1_\downarrow}\mathcal{P}_{v=a}}", Rightarrow, from=2-1, to=2-3]
	\arrow["{\overleftarrow{\P_\mathbf{IV}}1_{(c_\mathbf{III},a)p_\mathbf{II}}}", Rightarrow, from=2-3, to=2-5]
	\arrow["1_{p_\mathbf{IV}}\overleftarrow{\P_\mathbf{III}}1_{p_\mathbf{II}}"', Rightarrow, from=2-5, to=1-5]
\end{tikzcd}
\end{equation}
whose 2-holonomy is then given by
\begin{align}
W^\P=W^{\P_\mathbf{V}}W^{p_\mathbf{VI}}W^{p_\mathbf{I}}-W^{\P_\mathbf{IV}}W^{c_\mathbf{III}}W^{p_\mathbf{II}}-W^{p_\mathbf{IV}}W^{\P_\mathbf{III}}W^{p_\mathbf{II}}\quad.
\end{align}
Using Corollaries: \ref{cor:h^2 1st 2hol}, \ref{cor:h^2 2nd 2hol} and \ref{cor:h^2 3rd 2hol}, we have
\begin{equation}\label{eqn:hbar^2 of 1st stage 2-hol}
\left(\varepsilon^{-t_{13}}W^\P \varepsilon^{t_{23}}\right)_2\to\,\frac{\pi^2}{6}\left(\R-\L\right)+i\pi\ln(\varepsilon)\big(\L+\R\big)\quad.
\end{equation}
We recover the infinitesimal right hexagonator by siphoning off the second-order excess terms of 
\begin{equation}
\Phi_{213}e^{i\pi\overline{t_{(12)3}}}\Phi_{321}-\varepsilon^{-\ad_{t_{13}}}\left[e^{i\pi\overline{t_{13}}}\right]\Phi_{231}e^{i\pi t_{23}}\quad,
\end{equation}
i.e.,
\begin{align}
&\frac{\pi^2}{2}\left(\Lambda t_{(12)3}+t_{(12)3}\Lambda-\Lambda^2\right)+i\pi\ln(\varepsilon)[\Lambda,t_{13}]+\frac{\pi^2}{2}\left(\Lambda^2-\Lambda t_{13}-t_{13}\Lambda\right)-\pi^2\Lambda t_{23}\nn\\&\equiv\frac{\pi^2}{2}[t_{23},\Lambda]+i\pi\ln(\varepsilon)[\Lambda,t_{13}]\,.
\end{align}
\end{subequations}

%%%%%%%%%%%%%%%%%%%%%%%%%%%%%%%%%%%%%%%%%%%%%%%%%%%%%%%%%%%%%

\subsection{The last three vertically-interpolative 2-paths for constructing the hexagonator}
\begin{figure}[H]
    \centering
    \scalebox{0.4}{\includesvg[width=1000pt]{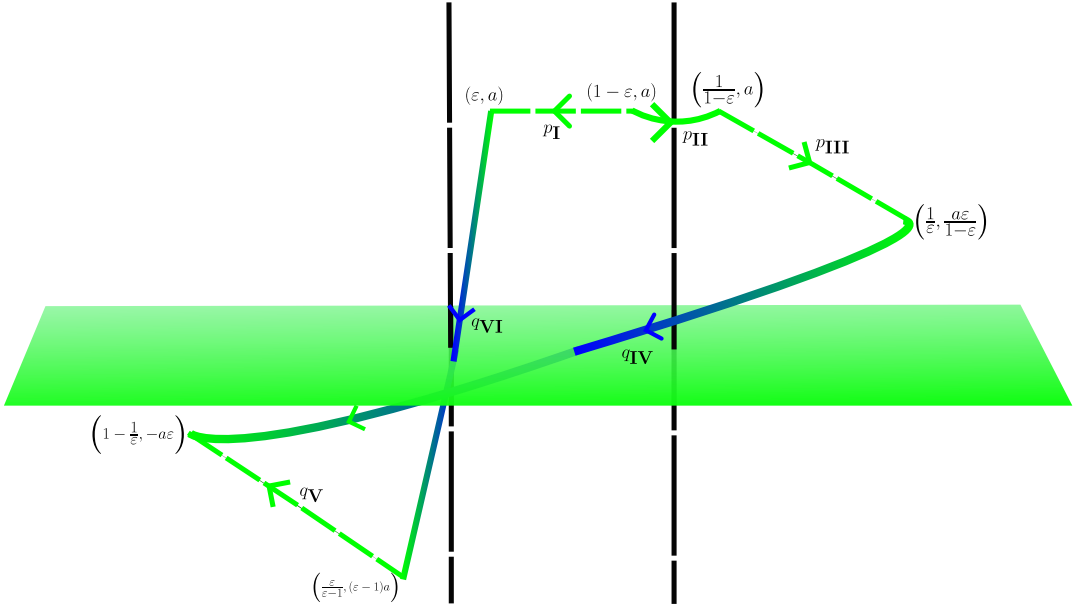}}
    \caption{The 1-paths for the right pre-hexagonator series}
    \label{fig:q1paths}
\end{figure}
\begin{subequations}
\begin{alignat}{3}
q_\mathbf{IV}(r)&:=\tau_{12}(c_\mathbf{II}(r+1),-a)=\left(\frac{1}{2}+\left(\frac{1}{\varepsilon}-\frac{1}{2}\right)e^{-i\pi r},\frac{a}{\left(\frac{1}{\varepsilon}-\frac{1}{2}\right)e^{-i\pi r}-\frac{1}{2}}\right)\quad&&,\\
q_\mathbf{V}(r)&:=\tau_{12}(c_\mathbf{I}(r),-a)=\left(\frac{\varepsilon+r(1-2\varepsilon)}{\varepsilon-1+r(1-2\varepsilon)},(\varepsilon-1+r(1-2\varepsilon))a\right)\quad&&,\\
q_\mathbf{VI}(r)&:=\tau_{(12)3}([c_\mathbf{IV}\circ\iota](r),-a\varepsilon)=\left(\frac{\varepsilon}{\frac{\varepsilon}{2}+\left(1-\frac{\varepsilon}{2}\right)e^{i\pi r}},\left[\frac{\varepsilon}{2}+\left(1-\frac{\varepsilon}{2}\right)e^{i\pi r}\right]a\right)\quad&&.
\end{alignat} 
\end{subequations}
\begin{rem}\label{rem:2nd explicit parallel transport}
Looking at the proof of \cite[Lemma 21]{BRW}, we can see that the parallel transport along $c_\mathbf{II}(r+1)$ is similar to that along $c_\mathbf{II}(r)$, i.e., 
\begin{equation}
W_{21}^{c_\mathbf{II}}\equiv e^{i\pi t_{23}}H'_\varepsilon(t_{23}\,,t_{12})
\end{equation}
\begin{subequations}
hence we have
\begin{equation}
W^{q_\mathbf{IV}}\equiv\,{}^{\nabla}W_{21}^{\tau_{12}(c_\mathbf{II},-a)}\equiv {}^{\Gamma(t_{12}\,,t_{13})}W_{21}^{c_\mathbf{II}}\equiv e^{i\pi t_{13}}H'_\varepsilon(t_{13}\,,t_{12})
\end{equation}
where we used \eqref{eq:tau_12 pullback} for the second equality. Using \eqref{eqn:tau_312 pullback} and Remark \ref{rem:explicit parallel transports}, we also have:
\begin{alignat}{2}
W^{q_\mathbf{V}}\equiv &\,{}^{\nabla}W^{\tau_{12}(c_\mathbf{I},-a)}\equiv {}^{\Gamma(t_{12}\,,t_{13})}W^{c_\mathbf{I}}\equiv e^{  \ln(\varepsilon)t_{13}}\Phi^\varepsilon_{213}e^{-  \ln(\varepsilon)t_{12}}\,&&,\label{eq:W^q_V}\\
W^{q_\mathbf{VI}}\equiv &\,{}^{\nabla}W^{\tau_{(12)3}(c_\mathbf{IV}\circ\iota,-a\varepsilon)}\equiv \big({}^{\Gamma(t_{23}\,,t_{13})}W^{c_\mathbf{IV}}\big)^{-1}\equiv H_\varepsilon\left(\overline{t_{12}},t_{13}\right)^{-1}e^{i\pi(t_{13}+t_{23})}\,&&.\label{eq:W^q_VI}
\end{alignat}
\end{subequations}
\end{rem}
Given some 2-path $q_\mathbf{V}\,q_\mathbf{VI}\,p_\mathbf{I}\xRightarrow{\Q}q_\mathbf{IV}\,p_\mathbf{III}\,p_\mathbf{II}$, the globularity condition \eqref{eq:globularity condition} imposes
\begin{align}
\varepsilon^{- t_{13}}W^\Q\varepsilon^{ t_{23}}:\Phi^\varepsilon_{213}[\widetilde{H}_\varepsilon^\mathbf{6'}]^{-1}\varepsilon^{-\ad_{t_{12}}}\big[e^{i\pi t_{(12)3}}\big]\Phi^\varepsilon_{321}\Rrightarrow e^{i\pi t_{13}}\widetilde{H}_\varepsilon^\mathbf{4'}\Phi^\varepsilon_{231}e^{i\pi t_{23}}\widetilde{H}_\varepsilon^\mathbf{2}.
\end{align}
Here we can easily siphon off the surplus terms (to all orders) by noting 
\begin{equation}\label{eq:siphoning off surplus terms}
[\varepsilon^{-t_{12}},e^{i\pi t_{(12)3}}]=\sum_{\begin{smallmatrix}m=1\\n=1
\end{smallmatrix}}^\infty\frac{(-\ln\varepsilon)^n(i\pi)^m}{n!m!}\sum_{\begin{smallmatrix}0\leq j\leq m-1\\0\leq k\leq n-1
\end{smallmatrix}}t_{(12)3}^jt_{12}^k[t_{12},t_{(12)3}]t_{12}^{n-1-k}t_{(12)3}^{m-1-j}
\end{equation}
where we used the biderivation property of the commutator bracket as in \eqref{eq:1st determination}. We define
\begin{equation}\label{eqn:def Ad}
t_\varepsilon^{t_{12}}:=\sum_{\begin{smallmatrix}m=1\\n=1
\end{smallmatrix}}^\infty\frac{(-\ln\varepsilon)^n(i\pi)^m}{n!m!}\sum_{\begin{smallmatrix}0\leq j\leq m-1\\0\leq k\leq n-1
\end{smallmatrix}}t_{(12)3}^jt_{12}^k\L t_{12}^{n-1-k}t_{(12)3}^{m-1-j}\varepsilon^{t_{12}}
\end{equation}
so that
\begin{equation}\label{eqn:R^epsilon=2hol-Leibniz}
\RR^\varepsilon:=\varepsilon^{- t_{13}}W^\Q\varepsilon^{ t_{23}}-\Phi^\varepsilon_{213}[\widetilde{H}_\varepsilon^\mathbf{6'}]^{-1}t_\varepsilon^{t_{12}}\Phi^\varepsilon_{321}
\end{equation}
and the right pre-hexagonator series is given by $\RR:=\lim_{\varepsilon\rightarrow0}\RR^\varepsilon$.
\subsubsection*{The fourth vertically-interpolative 2-path}
\begin{equation}
\Q_\mathbf{VI}(s,r):=\begin{cases}
      p_\mathbf{VI}(2r), & 0\leq r\leq\frac{s}{2}\\
      q_\downarrow^s(2r-s):=\left([c_\mathbf{VI}\circ\iota](s)\,,\,\frac{a\varepsilon}{[c_\mathbf{VI}\circ\iota](2r-s)}\right), & \frac{s}{2}\leq r\leq s\\
      q_\mathbf{VI}(r), & s\leq r\leq1
    \end{cases}\,:q_\mathbf{VI}\Rightarrow q^1_\downarrow\,p_\mathbf{VI}\,.
\end{equation}
\begin{figure}[H]
    \centering
    \scalebox{0.15}{\includesvg[width=1000pt]{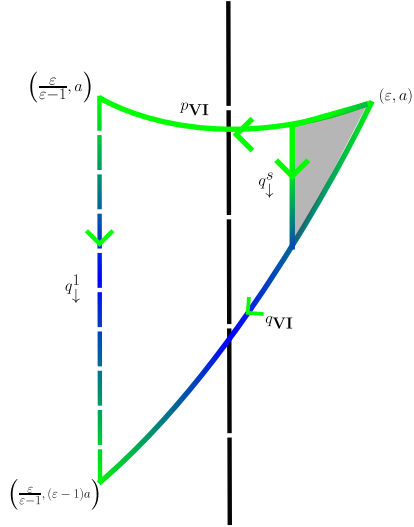}}
    \caption{The fourth vertically-interpolative 2-path}
    \label{fig:3rdvertinterpol}
\end{figure}
This time we have $\frac{v(s,s)}{a}=\frac{\varepsilon}{[c_\mathbf{VI}\circ\iota](s)}=\frac{\varepsilon}{z}$ and we use \eqref{eq:int_a^b Omega_0^k} in writing:
\begin{equation}
\int_0^s\frac{\left[\ln\frac{z}{\varepsilon}\right]^k}{k!z}\frac{\partial z}{\partial s'}\dd s'=\int_\varepsilon^{[c_\mathbf{VI}\circ\iota](s)}\Omega_0^{k+1}\qquad,\qquad
\int_\varepsilon^s\frac{\left[\ln\frac{z}{\varepsilon}\right]^k}{k!(z-1)}\frac{\partial z}{\partial s'}\dd s'=\int_\varepsilon^{[c_\mathbf{VI}\circ\iota](s)}\Omega_1\Omega_0^k\,.
\end{equation}
We have the following two expressions for the parallel transports:
\begin{equation}
W_{s0}^{p_\mathbf{VI}}\equiv{}^{\Gamma(t_{12},t_{23})}W_{s0}^{c_\mathbf{VI}\circ\iota}\qquad,\qquad\left(W_{s0}^{q_\mathbf{VI}}\right)^{-1}\equiv\big({}^{\Gamma(\overline{t_{12}},t_{23})}W_{s0}^{c_\mathbf{VI}\circ\iota}\big)^{-1}\,,
\end{equation}
where the first equality comes from \eqref{eq:8 of first partras} and the second equality comes from comparing \eqref{eq:W^q_VI} with \eqref{eq:8 of first partras}. The 2-holonomy is thus $W^{\Q_\mathbf{VI}}=g_\mathbf{VI}(1)W^{c_\mathbf{VI}\circ\iota}$ where $g_\mathbf{VI}(s)$ is given by
\begin{align}
\sum_{\begin{smallmatrix}r=0\\k=1
\end{smallmatrix}}^\infty(-1)^k\sum_{i_1,\ldots,i_r=0}^1\,\sum_{j=0}^{k-1}\int_{[c_\mathbf{VI}\circ\iota](0)}^{[c_\mathbf{VI}\circ\iota](s)}\prod_{m=1}^r\Omega_{i_m}[\ad_{t_{12}}-\Lambda]^{1-i_m}\ad_{t_{23}}^{i_m}\Lambda^j\left[\Omega_0\L+\Omega_1\R\right]\Omega_0^k\Lambda^{k-1-j}.
\end{align}

\begin{rem}
The globularity condition \eqref{eq:globularity condition} requires 
\begin{subequations}\label{subeq:W^(Q_VI) a priori from glob}
\begin{equation}\label{eq:g_uparrow a priori}
\lim_{\varepsilon\to0}(W^{\Q_\mathbf{VI}}):e^{i\pi t_{(12)3}}\Rrightarrow e^{i\pi\Lambda}e^{-i\pi t_{12}}\quad.
\end{equation}
An analogous argument to Corollary \ref{cor:BCH-type mod} shows that 
\begin{align}\label{eq:W^(Q_VI) from glob cond}
\lim_{\varepsilon\to0}(W^{\Q_\mathbf{VI}})=-\sum_{m=2}^\infty\frac{(i\pi)^m}{m!}\sum_{l=1}^{m-1}\sum_{q=0}^{m-l-1}\sum_{j=0}^{m-l-q-1}\binom{m-l}{q}(-1)^qt_{(12)3}^{l-1}\Lambda^j\L\Lambda^{m-l-q-1-j}t_{12}^q
\end{align}
\end{subequations}
which has the lowest order term 
\begin{equation}\label{eq:W^Q_Uparrow order 2 term}
(W^{\Q_\mathbf{VI}})_2\to\frac{\pi^2}{2}\L\quad.
\end{equation}
\end{rem}
As in Construction \ref{con:lim 2hol of 3rd verinterpol}, we directly calculate the limit of the 2-holonomy $W^{\Q_\mathbf{VI}}=g_\mathbf{VI}(1)W^{c_\mathbf{VI}\circ\iota}$ and show that we recover exactly the series formula \eqref{eq:W^(Q_VI) from glob cond}.
\begin{constr}\label{con:limit of 4th 2hol}
Because $z=[c_\mathbf{VI}\circ\iota](s)=\frac{2\varepsilon}{\varepsilon+(2-\varepsilon)e^{i\pi s}}$, we have that $\frac{1}{z}\frac{\partial z}{\partial s}\to-i\pi$ and $\frac{1}{z-1}\frac{\partial z}{\partial s}\to0$ as $\varepsilon\to0$. In other words, iterated integrals with appearances of $\Omega_1$ will vanish under the limit. We also have that $(\Lambda-\ad_{t_{12}})^r=(t_{(12)3}+R_{t_{12}})^r=\sum_{m=0}^r\binom{r}{m}t_{(12)3}^mR_{t_{12}}^{r-m}$ and \eqref{eq:int_a^b Omega_0^k} tells us that $\int_{[c_\mathbf{VI}\circ\iota](0)}^{[c_\mathbf{VI}\circ\iota](1)}\Omega_0^{r+k+1}\to\frac{(-i\pi)^{r+k+1}}{(r+k+1)!}$ hence 
\begin{equation}\label{eq:limit of hol of Q uparrow}
W^{\Q_\mathbf{VI}}\to-\sum_{\begin{smallmatrix}r=0\\k=1
\end{smallmatrix}}^\infty\frac{(i\pi)^{r+k+1}}{(r+k+1)!}\sum_{j=0}^{k-1}\sum_{m=0}^r\binom{r}{m}t_{(12)3}^m\Lambda^j\L\Lambda^{k-1-j}t_{12}^{r-m}e^{-i\pi t_{12}}\quad.
\end{equation}
As in \eqref{subeq:lim 2hol of 3rd verinterpol} of Construction \ref{con:lim 2hol of 3rd verinterpol}, we reindex all the sums appearing in \eqref{eq:limit of hol of Q uparrow} to eventually get 
\begin{equation}
\sum_{m=2}^\infty\frac{(i\pi)^m}{m!}\sum_{l=1}^{m-1}\sum_{q=0}^{m-l-1}\sum_{j=0}^{m-l-q-1}\sum_{n=1}^{q+1}\binom{m}{l-1}\binom{q}{n-1}(-1)^lt_{(12)3}^{n-1}\Lambda^j\L\Lambda^{m-l-q-1-j}t_{12}^{l+q-n}.
\end{equation}
We rewrite the troublesome triple sum as $\sum_{l=1}^{m-1}\sum_{q=0}^{m-l-1}\sum_{n=1}^{q+1}=\sum_{n=1}^{m-1}\sum_{l=1}^{m-n}\sum_{q=n-1}^{m-l-1}$. Now let us reindex twice; first by $a=q-n$ and then by $b=a+l$,
\begin{footnotesize}
\begin{equation}
\sum_{l=1}^{m-n}\sum_{a=-1}^{m-l-n-1}\binom{m}{l-1}\binom{a+n}{n-1}(-1)^lt_{(12)3}^{n-1}Xt_{12}^{l+a}=\sum_{b=0}^{m-n-1}\sum_{l'=0}^b\binom{m}{l'}\binom{b+n-l'-1}{n-1}(-1)^{l'-1}t_{(12)3}^{n-1}Xt_{12}^b.
\end{equation}
\end{footnotesize}
As in \eqref{subeq:1st alt double binom sum}, we equate the coefficient of $x^{l'}$ in $(1-x)^{m-n}=(1-x)^m(1-x)^{-n}$ to get the binomial convolution formula $\sum_{l'=0}^b\binom{m}{l'}\binom{b+n-l'-1}{n-1}(-1)^{l'}=\binom{m-n}{b}(-1)^b$.
\end{constr}

%%%%%%%%%%%%%%%%%%%%%%%%%%%%%%%%%%%%%%%%%%%%%%%%%%%%%%%%%%

\subsubsection*{The fifth vertically-interpolative 2-path} 
\begin{equation}
\Q_\mathbf{V}:=\begin{cases}
      p_\mathbf{V}(2r), & 0\leq r\leq\frac{s}{2}\\
      q_\leftarrow^s(2r-s):=\left([c_\mathbf{V}\circ\iota](s),\frac{a\varepsilon[1-c_\mathbf{I}(s)]}{(1-\varepsilon)[c_\mathbf{VI}\circ\iota](2r-s)}\right), & \frac{s}{2}\leq r\leq\frac{1+s}{2}\\
      q_\mathbf{V}(2r-1), & \frac{1+s}{2}\leq r\leq1
    \end{cases}:q_\mathbf{V}q_\leftarrow^0\Rightarrow q_\leftarrow^1p_\mathbf{V}\,.
\end{equation}
\begin{figure}[H]
    \centering
    \scalebox{0.35}{\includesvg[width=1000pt]{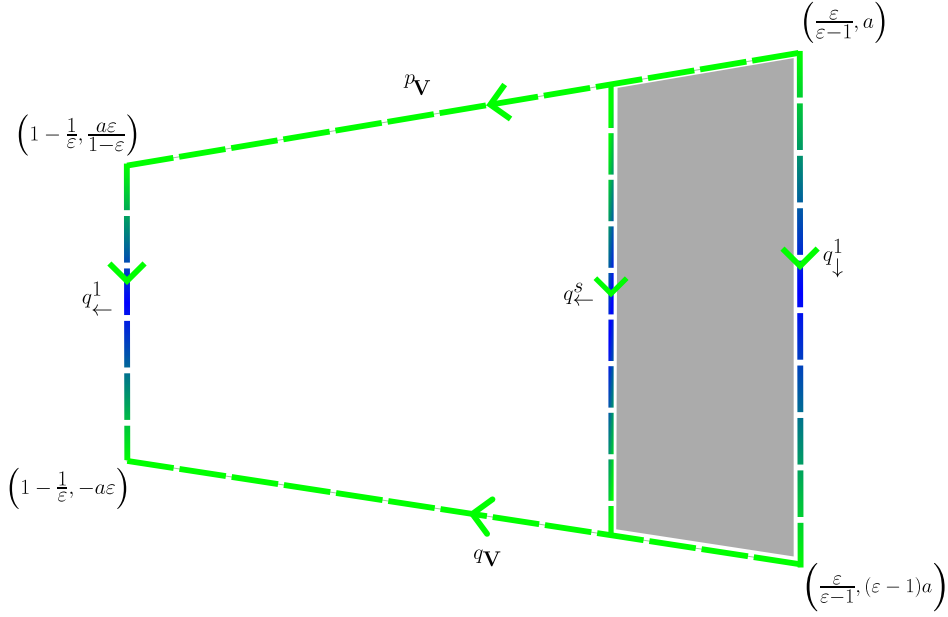}}
    \caption{The fifth vertically-interpolative 2-path, note that $q^1_\downarrow=q^0_\leftarrow$}
    \label{fig:5th interpol}
\end{figure}
We have $\frac{v\left(\frac{1+s}{2},s\right)}{a}\frac{1-\varepsilon}{1-c_\mathbf{I}(s)}=\varepsilon-1$ and $W_{s0}^{p_\mathbf{V}}\equiv W_{s0}^{q_\mathbf{V}}\equiv {}^{\Gamma(t_{12},\,t_{13})}W_{s0}^{c_\mathbf{I}}$ from \eqref{eqn:partra along p_V} and \eqref{eq:W^q_V}, respectively. As in \eqref{eq:z=c_V(1-s) implies 1/z dz/ds =.. and 1/z-1 dz/ds=..}, we have:
\begin{equation}
\int_0^s\frac{1}{z}\frac{\partial z}{\partial s'}\dd s'=\int_\varepsilon^{c_\mathbf{I}(s)}\Omega_0-\int_\varepsilon^{c_\mathbf{I}(s)}\Omega_1\qquad,\qquad\int_0^s\frac{1}{z-1}\frac{\partial z}{\partial s'}\dd s'=-\int_\varepsilon^{c_\mathbf{I}(s)}\Omega_1\quad.
\end{equation}
The 2-holonomy is thus $W^{\Q_\mathbf{V}}= g_\mathbf{V}(1)W^{p_\mathbf{V}}$ where $g_\mathbf{V}(s)$ is given by
\begin{align}
\sum_{\begin{smallmatrix}r=0\\k=1
\end{smallmatrix}}^\infty\frac{\left[\ln(\varepsilon-1)\right]^k}{k!}\sum_{i_1,\ldots,i_r=0}^1\,\sum_{j=0}^{k-1}\int_\varepsilon^{c_\mathbf{I}(s)}\prod_{m=1}^r\Omega_{i_m}\ad_{t_{12}}^{1-i_m}\ad_{t_{13}}^{i_m}\Lambda^j\left[\Omega_0\L-\Omega_1(\L+\R)\right]\Lambda^{k-1-j}\,.
\end{align}
We thus have 
\begin{equation}\label{eq:W^Q_Leftarrow order 2 term}
(W^{\Q_\mathbf{V}})_2\to -i\pi\ln(\varepsilon)\big(2\L+\R\big)
\end{equation}

\begin{rem}
The globularity condition \eqref{eq:globularity condition} imposes
\begin{equation}
\lim_{\varepsilon\to0}(W^{\Q_\mathbf{V}}):\varepsilon^{t_{13}}\Phi_{213}\varepsilon^{-t_{12}}e^{i\pi\Lambda}\Rrightarrow e^{i\pi\Lambda}\varepsilon^{t_{13}}\Phi_{213}\varepsilon^{-t_{12}}\quad,
\end{equation}
so we expand the commutator bracket 
\begin{equation}\label{eq:comm bracket of fifth 2hol}
\left[\varepsilon^{t_{13}}\Phi_{213}\varepsilon^{-t_{12}},e^{i\pi\Lambda}\right]=\varepsilon^{t_{13}}\Phi_{213}\left[\varepsilon^{-t_{12}},e^{i\pi\Lambda}\right]+\left[\varepsilon^{t_{13}},e^{i\pi\Lambda}\right]\Phi_{213}\varepsilon^{-t_{12}}+\varepsilon^{t_{13}}\left[\Phi_{213},e^{i\pi\Lambda}\right]\varepsilon^{-t_{12}}.
\end{equation} 
We already know from \eqref{eq:1st determination} and \eqref{eq:siphoning off surplus terms} how to extract modifications from terms like $\left[\varepsilon^{-t_{12}},e^{i\pi\Lambda}\right]$ and $\left[\varepsilon^{t_{13}},e^{i\pi\Lambda}\right]$ so let us focus on $\left[\Phi_{213},e^{i\pi\Lambda}\right]$. We compactify (even more) the notation surrounding Drinfeld's KZ associator series in Remark \ref{rem:LM Drinfeld series}, i.e., let us define:
\begin{subequations}
\begin{alignat}{6}
\zeta_{\j,\k}^{\p,\q}&:=(-1)^{|\p|+|\j|+|\k|}\zeta(p_1+1,\{1\}^{q_1-1},\ldots,p_{\tilde{\p}}+1,\{1\}^{q_{\tilde{\p}}-1})\prod_{l=1}^{\tilde{\p}}\binom{p_l}{j_l}\binom{q_l}{k_l}\quad&&,\\j_0&:=0\qquad,\qquad k_0:=|\q|-|\k|\qquad,\qquad j_{\tilde{\p}+1}:=|\p|-|\j|\qquad,\qquad k_{\tilde{\p}+1}:=0\quad&&,
\end{alignat}
\end{subequations}
so that 
\begin{equation}
\Phi_{213}=1+\sum_{\{\p,\q>0\,:\,\tilde{\p}=\tilde{\q}\}}\sum_{\begin{smallmatrix}0\leq\j\leq\p\\0\leq\k\leq\q\end{smallmatrix}}\zeta_{\j,\k}^{\p,\q}\prod_{l=0}^{{\tilde{\p}}+1}t_{12}^{j_l}t_{13}^{k_l}\quad.
\end{equation}
We expand $\left[\prod_{l=0}^{\tilde{\p}+1}t_{12}^{j_l}t_{13}^{k_l},\Lambda\right]$ as
\begin{align}
\sum_{l=0}^{{\tilde{\p}}+1}\prod_{n=0}^{l-1}t_{12}^{j_n}t_{13}^{k_n}\left[\sum_{m=1}^{j_l}t_{12}^{m-1}[t_{12},\Lambda]t_{12}^{j_l-m}t_{13}^{k_l}+t_{12}^{j_l}\sum_{m=1}^{k_l}t_{13}^{m-1}[t_{13},\Lambda]t_{13}^{k_l-m}\right]\prod_{n=l+1}^{{\tilde{\p}}+1}t_{12}^{j_n}t_{13}^{k_n}.
\end{align}
so that we have the following series formula for the modification $\int\{[\Phi_{213},e^{i\pi\Lambda}]\}$, 
\begin{footnotesize}
\begin{align}\label{eq:int[phi_213,e^Lambda]}
\sum_{\begin{smallmatrix}
\{\p,\q>0\,|\,\tilde{\p}=\tilde{\q}\}\\1\leq s\leq\infty
\end{smallmatrix}}\sum_{\begin{smallmatrix}0\leq\j\leq\p\\0\leq\k\leq\q\\0\leq l\leq{\tilde{\p}}+1\\1\leq a\leq s\end{smallmatrix}}\tfrac{(i\pi)^s\zeta_{\j,\k}^{\p,\q}}{s!}\Lambda^{a-1}\prod_{n=0}^{l-1}t_{12}^{j_n}t_{13}^{k_n}\left[\sum_{m=1}^{j_l}t_{12}^{m-1}\L t_{12}^{j_l-m}t_{13}^{k_l}-t_{12}^{j_l}\sum_{m=1}^{k_l}t_{13}^{m-1}(\L+\R)t_{13}^{k_l-m}\right]\prod_{n=l+1}^{{\tilde{\p}}+1}t_{12}^{j_n}t_{13}^{k_n}\Lambda^{s-a}
\end{align}
\end{footnotesize}
\end{rem}
%%%%%%%%%%%%%%%%%%%%%%%%%%%%%%%%%%%%%%%%%%%%%%%%%%%%%%%%%

\subsubsection*{The sixth and last vertically-interpolative 2-path}
\begin{equation}
\Q_\mathbf{IV}:=\begin{cases}
      p_\mathbf{IV}(2r), & 0\leq r\leq\frac{s}{2}\\
      q^\downarrow_s(2r-s):=\left(c_\mathbf{IV}(s)\,,\,\frac{a}{c_\mathbf{IV}(2r-s)-1}\right), & \frac{s}{2}\leq r\leq s\\
      q_\mathbf{IV}(r), & s\leq r\leq1
    \end{cases}\,:q_\mathbf{IV}\Rightarrow q^\downarrow_1p_\mathbf{IV}\,.
\end{equation}
\begin{figure}[H]
    \centering
    \scalebox{0.4}{\includesvg[width=1000pt]{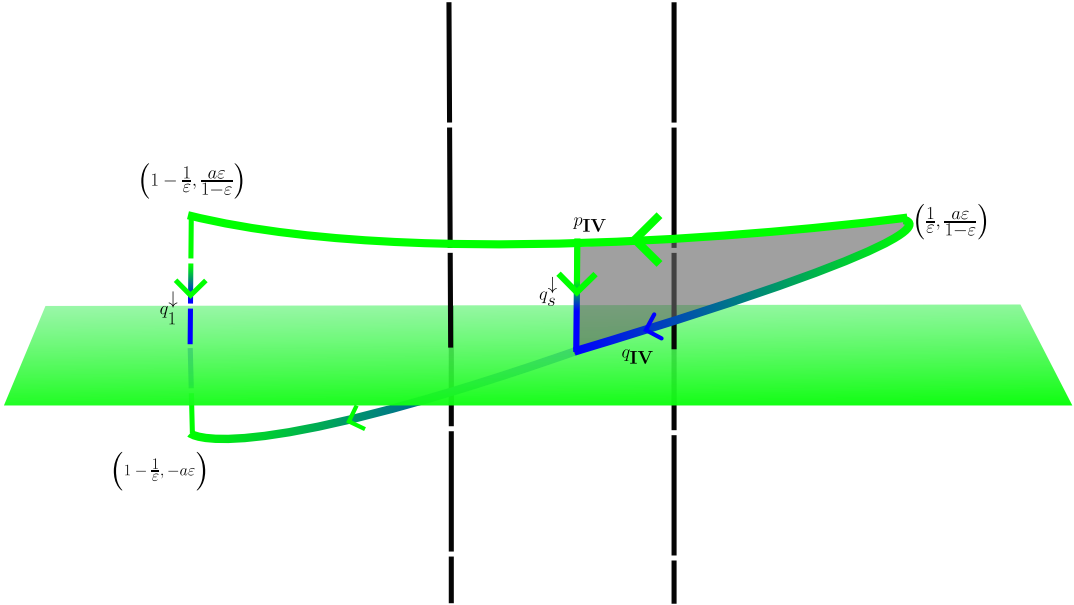}}
    \caption{The final vertically-interpolative 2-path}
    \label{fig:last vertical interpolative}
\end{figure}
Given that $\lim_{\varepsilon\to0}(W^{\Q_\mathbf{IV}}):e^{i\pi t_{13}}\Rrightarrow e^{i\pi\Lambda}e^{i\pi\overline{t_{13}}}$, Corollary \ref{cor:BCH-type mod} immediately gives us 
\begin{equation}\label{eq:W^(Q_IV)}
W^{\Q_\mathbf{IV}}\to-\sum_{m=2}^\infty\frac{(i\pi)^m}{m!}\sum_{l=1}^{m-1}\sum_{q=0}^{m-l-1}\sum_{j=0}^{m-l-q-1}\binom{m-l}{q}t_{13}^{l-1}\Lambda^j(\L+\R)\Lambda^{m-l-q-1-j}\overline{t_{13}}^q
\end{equation}
which has the lowest order term 
\begin{equation}\label{eq:W^Q_Downarrow order 2 term}
(W^{\Q_\mathbf{IV}})_2\to\frac{\pi^2}{2}(\L+\R)\quad.
\end{equation}
%%%%%%%%%%%%%%%%%%%%%%%%%%%%%%%%%%%%%%%%%%%%%%%%%%%%%%%%%%%

\subsection{The total 2-path and its 2-holonomy}\label{sub:finale of hex}
We omit the \textit{purely vertical} 2-path $q^1_\leftarrow\xRightarrow{\Q_{z=1-\frac{1}{\varepsilon}}}q^\downarrow_1$ below because it provides trivial 2-holonomy.
\begin{figure}[H]
    \centering
    \scalebox{0.4}{\includesvg[width=1000pt]{Q2paths}}
    \caption{The total 2-path $\Q$ as a composite}
    \label{fig:Q2paths}
\end{figure}
We construct the total 2-path $\Q$ as the following vertical composite:
\begin{subequations}
\begin{equation}
\begin{tikzcd}
	{q_\mathbf{V}\,q_\mathbf{VI}\,p_\mathbf{I}} &&&& {q_\mathbf{IV}\,p_\mathbf{III}\,p_\mathbf{II}} \\
	{q_\mathbf{V}\,q^1_\downarrow\,p_\mathbf{VI}\,p_\mathbf{I}} && {q^1_\leftarrow\,p_\mathbf{V}\,p_\mathbf{VI}\,p_\mathbf{I}} && {q^\downarrow_1\,p_\mathbf{IV}\,p_\mathbf{III}\,p_\mathbf{II}}
	\arrow["\Q", Rightarrow, dashed, from=1-1, to=1-5]
	\arrow["{1_{q_\mathbf{V}}\,\Q_\mathbf{VI}\,1_{p_\mathbf{I}}}"', Rightarrow, from=1-1, to=2-1]
	\arrow["{\Q_\mathbf{V}\,1_{p_\mathbf{VI}p_\mathbf{I}}}", Rightarrow, from=2-1, to=2-3]
	\arrow["{\mathcal{Q}_{z=1-\frac{1}{\varepsilon}}\P}", Rightarrow, from=2-3, to=2-5]
	\arrow["{\overleftarrow{\Q_\mathbf{IV}}\,1_{p_\mathbf{III}\,p_\mathbf{II}}}"', Rightarrow, from=2-5, to=1-5]
\end{tikzcd}
\end{equation}
whose 2-holonomy is thus given by 
\begin{align}
W^\Q=W^{q_\mathbf{V}}W^{\Q_\mathbf{VI}}W^{p_\mathbf{I}}+W^{\Q_\mathbf{V}}W^{p_\mathbf{VI}}W^{p_\mathbf{I}}+W^{q^\downarrow_1}W^\P-W^{\Q_\mathbf{IV}}W^{p_\mathbf{III}}W^{p_\mathbf{II}}\quad.
\end{align}
Recall from \eqref{eqn:R^epsilon=2hol-Leibniz} that this 2-holonomy features in the construction of the right pre-hexagonator series $\RR:=\lim_{\varepsilon\to0}\RR^\varepsilon$ as
\begin{equation}\label{eqn:R under limit}
\RR=\varepsilon^{-t_{13}}W^\Q\varepsilon^{t_{23}}-\Phi_{213}\sum_{\begin{smallmatrix}m=1\\n=1
\end{smallmatrix}}^\infty\frac{(-\ln\varepsilon)^n(i\pi)^m}{n!m!}\sum_{\begin{smallmatrix}1\leq j\leq m\\1\leq k\leq n
\end{smallmatrix}}t_{(12)3}^{j-1}t_{12}^{k-1}\L t_{12}^{n-k}t_{(12)3}^{m-j}\varepsilon^{t_{12}}\Phi_{321}\,.
\end{equation}
We can derive the infinitesimal right hexagonator again by using the 2nd order limits: \eqref{eqn:hbar^2 of 1st stage 2-hol}, \eqref{eq:W^Q_Uparrow order 2 term}, \eqref{eq:W^Q_Leftarrow order 2 term} and \eqref{eq:W^Q_Downarrow order 2 term}, 
\begin{align}
(\RR)_2&=\frac{\pi^2}{6}\big(\R-\L\big)+i\pi\ln(\varepsilon)\big(\L+\R\big)+\frac{\pi^2}{2}\L-i\pi\ln(\varepsilon)\big(2\L+\R\big)-\frac{\pi^2}{2}(\L+\R)+i\pi(\ln\varepsilon)\L\nn\\&=-\frac{\pi^2}{6}(\L+2\R)\quad.\label{eq:2nd derivation of inf right hex}
\end{align}
\end{subequations}
\subsubsection{The hexagonator series formula from BRW's functional equation}
Recall the purely horizontal 2-path $\P_{v=a}$ from Figure \ref{fig:P_2paths} and its trivial 2-holonomy, 
\begin{equation}\label{eq:W^(P_v=a)}
W^{\P_{v=a}}=\mathbf{0}:W^{c_\mathbf{V}\circ\iota}W^{p_\mathbf{VI}}W^{p_\mathbf{I}}\equiv W^{p_\mathbf{IV}}W^{c_\mathbf{III}}W^{p_\mathbf{II}}\quad.
\end{equation}
If we substitute the explicit expressions for the parallel transports in \eqref{eq:W^(P_v=a)} and take the limit $\varepsilon\to0$ then we recover the \textbf{BRW functional equation} \cite[Last equation in the proof of Theorem 22]{BRW}, i.e.,
\begin{equation}\label{eq:BRW functional relation}
\varepsilon^{-\overline{t_{13}}}W^{\P_{v=a}}\varepsilon^{t_{23}}=\mathbf{0}:\Phi(t_{12},\overline{t_{13}})e^{-i\pi t_{12}}\Phi_{321}\equiv e^{i\pi\overline{t_{13}}}\Phi(t_{23},\overline{t_{13}})e^{i\pi t_{23}}\quad.
\end{equation}
We can construct the right pre-hexagonator series $\RR$ directly by using \eqref{eq:BRW functional relation} together with the modifications: \eqref{eq:W^(Q_VI) from glob cond}, \eqref{eq:int[phi_213,e^Lambda]}, \eqref{eq:mod from phi_213}, \eqref{eq:mod from phi_231} and \eqref{eq:W^(Q_IV)}, i.e.,
% https://q.uiver.app/#q=WzAsNixbMSwyLCJlXntpXFxwaVxcTGFtYmRhfVxcUGhpKHRfezEyfSxcXG92ZXJsaW5le3RfezEzfX0pZV57LWlcXHBpIHRfezEyfX1cXFBoaV97MzIxfVxcZXF1aXYgZV57aVxccGlcXExhbWJkYX1lXntpXFxwaVxcb3ZlcmxpbmV7dF97MTN9fX1cXFBoaSh0X3syM30sXFxvdmVybGluZXt0X3sxM319KWVee2lcXHBpIHRfezIzfX0iXSxbMCwwLCJcXFBoaV97MjEzfWVee2lcXHBpIHRfeygxMikzfX1cXFBoaV97MzIxfSJdLFsxLDAsImVee2lcXHBpIHRfezEzfX1cXFBoaV97MjMxfWVee2lcXHBpIHRfezIzfX0iXSxbMCwxLCJcXFBoaV97MjEzfWVee2lcXHBpXFxMYW1iZGF9ZV57LWlcXHBpIHRfezEyfX1cXFBoaV97MzIxfSJdLFswLDIsImVee2lcXHBpXFxMYW1iZGF9XFxQaGlfezIxM31lXnstaVxccGkgdF97MTJ9fVxcUGhpX3szMjF9Il0sWzEsMSwiZV57aVxccGlcXExhbWJkYX1lXntpXFxwaVxcb3ZlcmxpbmV7dF97MTN9fX1cXFBoaV97MjMxfWVee2lcXHBpIHRfezIzfX0iXSxbMSwyLCJcXG1hdGhiZntSfSIsMCx7ImxldmVsIjozLCJzdHlsZSI6eyJib2R5Ijp7Im5hbWUiOiJkYXNoZWQifX19XSxbMSwzLCJcXFBoaV97MjEzfVdee1xcbWF0aGNhbHtRfV9cXG1hdGhiZntWSX19XFxQaGlfezMyMX0iLDIseyJsZXZlbCI6M31dLFszLDQsIlxcaW50XFx7W1xcUGhpX3syMTN9LGVee2lcXHBpXFxMYW1iZGF9XVxcfWVeey1pXFxwaSB0X3sxMn19XFxQaGlfezMyMX0iLDIseyJsZXZlbCI6M31dLFs0LDAsImVee2lcXHBpXFxMYW1iZGF9XFxpbnRcXHtcXFBoaV97MjEzfS1cXFBoaSh0X3sxMn0sXFxvdmVybGluZXt0X3sxM319KVxcfWVeey1pXFxwaSB0X3sxMn19XFxQaGlfezMyMX0iLDIseyJjdXJ2ZSI6MiwibGV2ZWwiOjN9XSxbNSwyLCItV157XFxtYXRoY2Fse1F9X1xcbWF0aGJme0lWfX1cXFBoaV97MjMxfWVee2lcXHBpIHRfezIzfX0iLDIseyJsZXZlbCI6M31dLFswLDUsIi1lXntpXFxwaVxcTGFtYmRhfWVee2lcXHBpXFxvdmVybGluZXt0X3sxM319fVxcaW50XFx7XFxQaGlfezIzMX0tXFxQaGkodF97MjN9LFxcb3ZlcmxpbmV7dF97MTN9fSlcXH1lXntpXFxwaSB0X3syM319IiwyLHsibGV2ZWwiOjN9XV0=
\begin{footnotesize}
\begin{equation}\label{eq:direct construction of hexagonator}
\begin{tikzcd}
	{\Phi_{213}e^{i\pi t_{(12)3}}\Phi_{321}} & {e^{i\pi t_{13}}\Phi_{231}e^{i\pi t_{23}}} \\
	{\Phi_{213}e^{i\pi\Lambda}e^{-i\pi t_{12}}\Phi_{321}} & {e^{i\pi\Lambda}e^{i\pi\overline{t_{13}}}\Phi_{231}e^{i\pi t_{23}}} \\
	{e^{i\pi\Lambda}\Phi_{213}e^{-i\pi t_{12}}\Phi_{321}} & {e^{i\pi\Lambda}\Phi(t_{12},\overline{t_{13}})e^{-i\pi t_{12}}\Phi_{321}\equiv e^{i\pi\Lambda}e^{i\pi\overline{t_{13}}}\Phi(t_{23},\overline{t_{13}})e^{i\pi t_{23}}}
	\arrow["{\mathbf{R}}", Rightarrow, scaling nfold=3, dashed, from=1-1, to=1-2]
	\arrow["{\Phi_{213}W^{\mathcal{Q}_\mathbf{VI}}\Phi_{321}}"', Rightarrow, scaling nfold=3, from=1-1, to=2-1]
	\arrow["{\int\{[\Phi_{213},e^{i\pi\Lambda}]\}e^{-i\pi t_{12}}\Phi_{321}}"', Rightarrow, scaling nfold=3, from=2-1, to=3-1]
	\arrow["{-W^{\mathcal{Q}_\mathbf{IV}}\Phi_{231}e^{i\pi t_{23}}}"', Rightarrow, scaling nfold=3, from=2-2, to=1-2]
	\arrow["{e^{i\pi\Lambda}\int\{\Phi_{213}-\Phi(t_{12},\overline{t_{13}})\}e^{-i\pi t_{12}}\Phi_{321}}"', curve={height=12pt}, Rightarrow, scaling nfold=3, from=3-1, to=3-2]
	\arrow["{-e^{i\pi\Lambda}e^{i\pi\overline{t_{13}}}\int\{\Phi_{231}-\Phi(t_{23},\overline{t_{13}})\}e^{i\pi t_{23}}}"', Rightarrow, scaling nfold=3, from=3-2, to=2-2]
\end{tikzcd}
\end{equation}
\end{footnotesize}
\begin{rem}
It can be shown through a rather lengthy (but straightforward) computation that the `algebraic' construction \eqref{eq:direct construction of hexagonator} is recovered from the 2-holonomy construction \eqref{eqn:R under limit} by substituting the globularity derivations of the 2-holonomies of the six vertically-interpolative 2-paths. In other words, the 2-holonomy construction \eqref{eqn:R under limit} is indeed convergent and can be seen as a geometric origin of the algebraic construction.
\begin{enumerate}
\item There might be confusion as to why we have provided this geometric origin at all if we could have merely produced the latter algebraic construction but one need only recall that, back in the 1-categorical context, proving that the hexagon axiom holds for Drinfeld's KZ associator series is far easier via the methods of parallel transport than by analysing the explicit series formula for $\Phi_\KZ(A,B)$ in Remark \ref{rem:LM Drinfeld series} or \eqref{subeq:BRW's explicit Phi formula}. To this point, we have already seen that modifications like $\int\{\Phi_{213}-\Phi(t_{12},\overline{t_{13}})\}$ have an unfeasibly large series formula \eqref{eq:mod from phi_213} thus problematising a direct proof that the Breen polytope axiom \eqref{eqn:Breen axiom} holds. Instead, our proof in Construction \ref{con:the Breen 2-loop} mirrors the idea from the 1-categorical context of building a loop whose sides yield the requisite terms. 
\item Given that we can derive the 2-holonomies through the globularity condition, it might seem that the methods developed for vertically-interpolative 2-paths in Subsubsection \ref{subsub:1st 2path} are completely redundant. However, we will show in the next paper of this series that such methods are imperative when it comes to constructing the pentagonator series because the 2-holonomies which constitute such a pentagonator can \textit{not} be derived through their globularity nor is there a corresponding functional equation from which the pentagon equation is derived in \cite[Theorem 24]{BRW}. 
\end{enumerate}
\end{rem}

%%%%%%%%%%%%%%%%%%%%%%%%%%%%%%%%%%%%%%%%%%%%%%%%%%%%%%%%%%
\newpage
\section{The Breen polytope axiom as a 2-loop}\label{sec:Breen}
Throughout this section we assume a coherent totally symmetric strict infinitesimal 2-braiding.
\subsection{The Breen equation}\label{subsec:Breen equation}
Recall the Breen polytope axiom \eqref{eqn:Breen axiom},
\begin{equation}\label{eq:Breen axiom index version}
\sigma_{\sigma_{12}}+\alpha_{321}\H^R_{213}\alpha_{213}\sigma_{12}-\alpha_{321}\sigma_{23}\alpha_{231}^{-1}\H^L+\alpha_{321}\sigma_{\sigma_{23}}\alpha+\H^L_{132}\alpha_{132}^{-1}\sigma_{23}\alpha=\sigma_{12}\alpha_{312}\H^R\alpha\,.
\end{equation}
\begin{constr}\label{con:derivation of Breen equation}
We wish to express \eqref{eq:Breen axiom index version} purely in terms of the infinitesimal 2-braiding $t$ and the right pre-hexagonator $\RR$. First let us make use of symmetry of $t$ to note that, for example,
\begin{subequations}
\begin{equation}
\alpha_{231}:\equiv \Phi(t_{23},t_{13})\qquad\implies\qquad\alpha_{231}^{-1}\equiv \Phi(t_{13},t_{23})\equiv:\Phi_{132}\quad.
\end{equation}
Second, let us make use of the total symmetry of $t$ to note that, for example,
\begin{equation}
\sigma_{\sigma_{12}}=\left(\gamma e^{i\pi t}\right)_{\gamma_{12}e^{i\pi t_{12}}}=\gamma_{(21)3}\left(e^{i\pi t}\right)_{\gamma_{12}e^{i\pi t_{12}}}=\gamma_{(21)3}\gamma_{12}\left(e^{i\pi t}\right)_{e^{i\pi t_{12}}}\,.
\end{equation}
where the first equality used the definition of the braiding, the second equality used the definition of the homotopy components of the vertical composition of pseudonatural transformations \eqref{eq:vercomp pseudos} together with the fact that the symmetric braiding $\gamma$ is a $\Ch_\bbC^{[-1,0]}$-\emph{natural} transformation and the third equality used the fact that the homotopy components of a pseudonatural transformation splits products \eqref{eqn:dubindex splits prods} together with the fact\footnote{Which itself comes from: the fact that identity pseudonatural transformations have trivial homotopy components, the total symmetry \eqref{left total symmetry} and, again, the homotopy components of the vertical composition of pseudonatural transformations \eqref{eq:vercomp pseudos}.} that $\left(e^{i\pi t}\right)_{\gamma_{12}}=0$. Third, we note that 
\begin{equation}
\H^R:=\gamma_{(12)3}\RR\qquad\implies\qquad\H_{213}^R=\gamma_{(21)3}\RR_{213}\quad,    
\end{equation}
and, similarly,
\begin{equation}
\H^L:=\gamma_{1(23)}\LL\qquad\implies\qquad\H_{132}^L=\gamma_{1(32)}\LL_{132}\quad.
\end{equation}
Fourth, we recall that $\gamma$ satisfies the hexagon axiom thus
\begin{equation}
\gamma_{(21)3}\gamma_{12}=\gamma_{12}\gamma_{(12)3}=\gamma_{12}\gamma_{13}\gamma_{23}=\gamma_{1(32)}\gamma_{23}=\gamma_{23}\gamma_{1(23)}\quad.
\end{equation}
Fifth, we strip away the appearances of the symmetric braiding and denote the \textbf{congruences} as $\underline{\L}:=\left(e^{i\pi t}\right)_{e^{i\pi t_{12}}}$ and $\underline{\R}:=\left(e^{i\pi t}\right)_{e^{i\pi t_{23}}}$,
\begin{align}
\underline{\L}+\Phi_{321}\RR_{213}\Phi_{213}e^{i\pi t_{12}}-\Phi_{321}e^{i\pi t_{23}}\Phi_{132}\LL+\Phi_{321}\underline{\R}\Phi +\LL_{132}\Phi_{231}e^{i\pi t_{23}}\Phi =e^{i\pi t_{12}}\Phi_{312}\RR\Phi\,.
\end{align}
Sixth, we side-by-side compare the pre-hexagonator series
\begin{alignat}{6}
\RR&:\Phi_{213}e^{i\pi t_{(12)3}}\Phi_{321}\Rrightarrow e^{i\pi t_{13}}\Phi_{231}e^{i\pi t_{23}}\quad&&,\\
\LL&:\Phi_{231}e^{i\pi t_{1(23)}}\Phi \Rrightarrow e^{i\pi t_{13}}\Phi_{213}e^{i\pi t_{12}}\quad&&,
\end{alignat}
hence, appealing to total symmetry of $t$, we are justified in choosing $\LL=\RR_{321}$ thus $\LL_{132}=\RR_{231}$ (cf. the relations in Example \ref{ex: diff cross mod n=2} for a totally symmetric $t$) whence 
\begin{align}\label{Breen polytope in t only}
\underline{\L}+\Phi_{321}\RR_{213}\Phi_{213}e^{i\pi t_{12}}-\Phi_{321}e^{i\pi t_{23}}\Phi_{132}\RR_{321}+\Phi_{321}\underline{\R}\Phi +\RR_{231}\Phi_{231}e^{i\pi t_{23}}\Phi =e^{i\pi t_{12}}\Phi_{312}\RR\Phi\,.
\end{align}
\end{subequations}
\end{constr}
We now refer to \eqref{Breen polytope in t only} as the \textbf{Breen equation} throughout.

%%%%%%%%%%%%%%%%%%%%%%%%%%%%%%%%%%%%%%%%%%%%%%%%%%%%%%%%%%%%%

\subsubsection{Coherent infinitesimal 2-braidings and coherent CMKZ 2-connections}\label{subsub:S_3 acts coherently}
The challenge at this point is to find a six-faced 2-loop in $\bbC^{\times\times}\times\bbC^{\times}$ such that we realise the LHS minus the RHS of the Breen equation \eqref{Breen polytope in t only} as the limit (as $\varepsilon$ tends to 0) of the 2-holonomy. The 2-connection $(\nabla,\Delta)$ being 2-flat will guarantee that this 2-holonomy is zero so long as we do not wrap around the singularities $z=0$, $z=1$ or $v=0$. As mentioned in Remark \ref{rem:pullback 2-connection is always 2-flat}, this 2-flatness property is not by itself sufficient and we must figure out what we are missing by making use of the coherency of $t$. Let us pullback the entire 2-connection $(\nabla,\Delta)$ along the transferred left actions of $\mathrm{S}_3$ (extending \eqref{subeq:S_3 right action on 1-form}):
\begin{subequations}
\begin{align}
\tau_{12}^*(\nabla,\Delta)&=\left(\nabla(t_{12}\,,t_{13}\,,t_{23})\,,\,2\left(\frac{\L}{zv}+\frac{-(\L+\R)}{(z-1)v}\right)\dd z\wedge\dd v\right)\quad&,\\
\tau_{23}^*(\nabla,\Delta)&=\left(\nabla(t_{13}\,,t_{23}\,,t_{12})\,,\,2\left(\frac{-(\L+\R)}{zv}+\frac{\R}{(z-1)v}\right)\dd z\wedge\dd v\right)\quad&,\\
\tau_{13}^*(\nabla,\Delta)&=\left(\nabla(t_{23}\,,t_{12}\,,t_{13})\,,\,2\left(\frac{\R}{zv}+\frac{\L}{(z-1)v}\right)\dd z\wedge\dd v\right)\quad&,\\
\tau_{(12)3}^*(\nabla,\Delta)&=\left(\nabla(t_{23}\,,t_{13}\,,t_{12})\,,\,2\left(\frac{\R}{zv}+\frac{-(\L+\R)}{(z-1)v}\right)\dd z\wedge\dd v\right)\quad&,\\
\tau_{1(23)}^*(\nabla,\Delta)&=\left(\nabla(t_{13}\,,t_{12}\,,t_{23})\,,\,2\left(\frac{-(\L+\R)}{zv}+\frac{\L}{(z-1)v}\right)\dd z\wedge\dd v\right)\quad&.
\end{align}
\end{subequations}
As mentioned below \eqref{subeq:S_3 right action on 1-form}, the right action permutes the indices of the $t$'s according to the inverse of the group element of $\mathrm{S}_3$ and symmetry of $t$ allows us to rewrite $t_{ji}=t_{ij}$ for $i<j$. The Breen equation \eqref{Breen polytope in t only} informs us to only take interest in the pullbacks according to $\tau_{12}$, $\tau_{13}$ and $\tau_{(12)3}$ so we use the relations of a totally symmetric $t$ in Example \ref{ex: diff cross mod n=2} to write:
\begin{subequations}\label{subeq:relevant 2-form pullbacks}
\begin{align}
\tau_{12}^*\Delta&=\,2\left(\frac{\L_{213}}{zv}+\frac{-(\L+\R)}{(z-1)v}\right)\dd z\wedge\dd v\quad&,\\
\tau_{13}^*\Delta&=\,2\left(\frac{\L_{321}}{zv}+\frac{\R_{321}}{(z-1)v}\right)\dd z\wedge\dd v\quad&,\\
\tau_{(12)3}^*\Delta&=\,2\left(\frac{\L_{231}}{zv}+\frac{-(\L+\R)}{(z-1)v}\right)\dd z\wedge\dd v\quad&.
\end{align}
\end{subequations}
Now recall, again from Example \ref{ex: diff cross mod n=2}, that coherency of $t$ implies $-(\L+\R)=\R_{213}$ and a totally symmetric coherent $t$ gives us $-(\L+\R)=\R_{231}$. Substituting these relations into the above pullbacks \eqref{subeq:relevant 2-form pullbacks} means that the right action of $\mathrm{S}_3$ on the 2-connection permutes the indices of \textit{both} the $t$'s and the 4-term relationators $\L/\R$ according to the inverse of the group element of $\mathrm{S}_3$. It is this \textit{conjunctive} right action that we refer to as a \textbf{coherent} right action on the 2-connection $(\nabla,\Delta)$. This property means that we can construct $\RR^\varepsilon_{213}$, $\RR^\varepsilon_{321}$ and $\RR^\varepsilon_{231}$ by considering the 2-holonomy of the 2-paths $\Q_{213}:=\tau_{12}\Q$, $\Q_{321}:=\tau_{13}\Q$ and $\Q_{312}:=\tau_{(12)3}\Q$, respectively. Actually, we know from \eqref{eqn:R^epsilon=2hol-Leibniz} that such pre-hexagonator series come with conjugations and an extra term. Subsection \ref{subsec:Breen equation as 2hol} is devoted to showing that all these conjugations and extra terms balance out.

%%%%%%%%%%%%%%%%%%%%%%%%%%%%%%%%%%%%%%%%%%%%%%%%%%%%%%%%%%

\subsection{The Breen equation as a 2-holonomy}\label{subsec:Breen equation as 2hol}
As always, we are actually interested in the limit $\varepsilon\to0$ so we work under that limit in this subsection and suppress the notation $\lim_{\varepsilon\to0}$. \eqref{eqn:R under limit} reads as
\begin{equation}
\RR=\varepsilon^{-t_{13}}W^\Q \varepsilon^{t_{23}}-\Phi_{213}t_\varepsilon^{t_{12}}\Phi_{321}\quad.
\end{equation}
The Breen equation \eqref{Breen polytope in t only} features this term as
\begin{align}
e^{i\pi t_{12}}\Phi_{312}\RR\Phi =&e^{i\pi t_{12}}\Phi_{312}\varepsilon^{-t_{13}}W^\Q \varepsilon^{t_{23}}\Phi -e^{i\pi t_{12}}t_\varepsilon^{t_{12}}\quad.
\end{align}
We invoke both the total symmetry and coherency of $t$ in writing 
\begin{equation}
\RR_{213}=\varepsilon^{-t_{23}}W^{\Q_{213}}\varepsilon^{t_{13}}-\Phi t_\varepsilon^{t_{12}}\Phi_{312}\quad,
\end{equation}
and this term features in the Breen equation \eqref{Breen polytope in t only} as
\begin{equation}
\Phi_{321}\RR_{213}\Phi_{213}e^{i\pi t_{12}}=\Phi_{321}\varepsilon^{-t_{23}}W^{\Q_{213}}\varepsilon^{t_{13}}\Phi_{213}e^{i\pi t_{12}}-t_\varepsilon^{t_{12}}e^{i\pi t_{12}}\quad.
\end{equation}
The other two pre-hexagonator series terms feature as:
\begin{align}
\Phi_{321}e^{i\pi t_{23}}\Phi_{132}\RR_{321}&=\Phi_{321}e^{i\pi t_{23}}\left(\Phi_{132}\varepsilon^{-t_{13}}W^{\Q_{321}}\varepsilon^{t_{12}}-t_\varepsilon^{t_{23}}\Phi \right)\\
\RR_{231}\Phi_{231}e^{i\pi t_{23}}\Phi &=\left(\varepsilon^{-t_{12}}W^{\Q_{312}}\varepsilon^{t_{13}}\Phi_{231}-\Phi_{321}t_\varepsilon^{t_{23}}\right)e^{i\pi t_{23}}\Phi 
\end{align}
\begin{rem}
We use Remarks \ref{rem:explicit parallel transports} and \ref{rem:2nd explicit parallel transport} to write\footnote{For now, read terms like $W^{c_\mathbf{VI}[p_\mathbf{V}\circ\iota]\Q c_\mathbf{I}}$ as $W^{c_\mathbf{VI}}W^{[p_\mathbf{V}\circ\iota]}W^{\Q}W^{c_\mathbf{I}}$.}:
\begin{subequations}
\begin{align}
\varepsilon^{t_{12}}e^{i\pi t_{12}}\Phi_{312}\RR\Phi \varepsilon^{-t_{12}}&=W^{c_\mathbf{VI}[p_\mathbf{V}\circ\iota]\Q c_\mathbf{I}}-\varepsilon^{\ad_{t_{12}}}\left(e^{i\pi t_{12}}t_\varepsilon^{t_{12}}\right)\,,\\
\varepsilon^{t_{12}}\Phi_{321}\RR_{213}\Phi_{213}e^{i\pi t_{12}}\varepsilon^{-t_{12}}&=W^{p_\mathbf{I}\Q_{213}p_\mathbf{V}c_\mathbf{VI}}-\varepsilon^{\ad_{t_{12}}}\left(t_\varepsilon^{t_{12}}e^{i\pi t_{12}}\right)\,,\\
\varepsilon^{t_{12}}\Phi_{321}e^{i\pi t_{23}}\Phi_{132}\RR_{321}\varepsilon^{-t_{12}}&=W^{p_\mathbf{I}c_\mathbf{II}}\left(W^{[p_\mathbf{III}\circ\iota]\Q_{321}}-\varepsilon^{\ad_{t_{23}}}\left(t_\varepsilon^{t_{23}}\right)W^{c_\mathbf{I}}\right)\,,\\
\varepsilon^{t_{12}}\RR_{231}\Phi_{231}e^{i\pi t_{23}}\Phi \varepsilon^{-t_{12}}&=\left(W^{\Q_{312}p_\mathbf{III}}-W^{p_\mathbf{I}}\varepsilon^{\ad_{t_{23}}}\left(t_\varepsilon^{t_{23}}\right)\right)W^{c_\mathbf{II}c_\mathbf{I}}\,,\\
\varepsilon^{t_{12}}\underline{\L}\varepsilon^{-t_{12}}&=\varepsilon^{\ad_{t_{12}}}\left(\underline{\L}\right)\quad,\\
\varepsilon^{t_{12}}\Phi_{321}\underline{\R}\Phi \varepsilon^{-t_{12}}&=W^{p_\mathbf{I}}\varepsilon^{\ad_{t_{23}}}\left(\underline{\R}\right)W^{c_\mathbf{I}}\quad.
\end{align}
\end{subequations}
The Breen equation \eqref{Breen polytope in t only} conjugated by $\varepsilon^{t_{12}}$ now gives us
\begin{align}
&\varepsilon^{\ad_{t_{12}}}\left(\underline{\L}+\big[e^{i\pi t_{12}},t_\varepsilon^{t_{12}}\big]\right)+W^{p_\mathbf{I}\Q_{213}p_\mathbf{V}c_\mathbf{VI}}-W^{p_\mathbf{I}c_\mathbf{II}[p_\mathbf{III}\circ\iota]\Q_{321}}\label{eqn:suggestive Breen}\\&\qquad\qquad+W^{p_\mathbf{I}}\varepsilon^{\ad_{t_{23}}}\left(\underline{\R}+\big[e^{i\pi t_{23}},t_\varepsilon^{t_{23}}\big]\right)W^{c_\mathbf{I}}+W^{\Q_{312}p_\mathbf{III}c_\mathbf{II}c_\mathbf{I}}=W^{c_\mathbf{VI}[p_\mathbf{V}\circ\iota]\Q c_\mathbf{I}}\quad.\nn
\end{align}
\end{rem}
We now wish to show that we have:
\begin{equation}\label{eqn:rel between L and Adt_12}
\varepsilon^{\ad_{t_{12}}}\left(\underline{\L}+\big[e^{i\pi t_{12}},t_\varepsilon^{t_{12}}\big]\right)=\underline{\L}\qquad,\qquad \varepsilon^{\ad_{t_{23}}}\left(\underline{\R}+\big[e^{i\pi t_{23}},t_\varepsilon^{t_{23}}\big]\right)=\underline{\R}\quad,
\end{equation}
because Subsection \ref{sub:2-loop} will encode the congruences $\underline{\mathcal{L/R}}$ as 2-holonomies of particular 2-paths and then the above suggestive form \eqref{eqn:suggestive Breen} makes it clear where to begin with the construction of the desired 2-loop in $\bbC^{\times\times}\times\bbC^\times$. We already have an explicit series formula for the term $t_\varepsilon^{t_{12}}$ as in \eqref{eqn:def Ad} but, to show \eqref{eqn:rel between L and Adt_12} holds, we also need an explicit series formula for the congruence $\underline{\L}$.

%%%%%%%%%%%%%%%%%%%%%%%%%%%%%%%%%%%%%%%%%%%%%%%%%%%%%%%%%%

\subsubsection{The congruences as series formulae}
\begin{align}
\left[e^{i\pi t}\right]_{e^{i\pi t_{12}}}&=\sum_{m=1}^\infty\frac{(i\pi)^m}{m!}\left[t^m\right]_{e^{i\pi t_{12}}}\nn\\&=\sum_{m=1}^\infty\frac{(i\pi)^m}{m!}\sum_{j=0}^{m-1}t_{(12)3}^jt_{e^{i\pi t_{12}}}t_{(12)3}^{m-1-j}\nn\\&=\sum_{m=1}^\infty\sum_{n=1}^\infty\frac{(i\pi)^{m+n}}{m!n!}\sum_{j=0}^{m-1}t_{(12)3}^jt_{t_{12}^n}t_{(12)3}^{m-1-j}\nn\\&=\sum_{\begin{smallmatrix}m=1\\n=1
\end{smallmatrix}}^\infty\frac{(i\pi)^{m+n}}{m!n!}\sum_{\begin{smallmatrix}0\leq k\leq n-1\\0\leq j\leq m-1
\end{smallmatrix}}t_{(12)3}^jt_{12}^k\L t_{12}^{n-1-k}t_{(12)3}^{m-1-j}\quad.\label{eqn:e^L series}
\end{align}
where the first equality used the fact that the identity pseudonatural transformation has trivial homotopy components (see Construction \ref{con:vert comp pseudo}) in conjunction with the linearity of pseudonatural transformations \eqref{eq:add pseudos and mods}, the second equality used the formula for the homotopy components of the vertical composition of pseudonatural transformations \eqref{eq:vercomp pseudos}, the third equality used the linearity of the homotopy components (see Definition \ref{def:pseudonatural}) in conjunction with the fact that they annihilate units \eqref{eqn: homotopy kills unit} and the last equality used the fact that the homotopy components split products \eqref{eqn:dubindex splits prods} together with the definition of the left 4-term relationator $\L:=t_{t_{12}}$.
\begin{lem}\label{lem:rel between Ad and e^L}
The relation \eqref{eqn:rel between L and Adt_12} holds.
\end{lem}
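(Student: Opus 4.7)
The plan is to reduce \eqref{eqn:rel between L and Adt_12} to a symmetry identity that follows directly from the Leibniz rule \eqref{eqn:dubindex splits prods} applied to a pair of commuting 1-cells. First I would factor $t_\varepsilon^{t_{12}}=Y\,\varepsilon^{t_{12}}$, where $Y$ denotes the double sum in \eqref{eqn:def Ad} with its trailing $\varepsilon^{t_{12}}$ stripped off. Because $\varepsilon^{\ad_{t_{12}}}$ is the algebra automorphism $(\cdot)\mapsto\varepsilon^{t_{12}}(\cdot)\varepsilon^{-t_{12}}$ which fixes $e^{i\pi t_{12}}$ and sends $t_\varepsilon^{t_{12}}$ to $\varepsilon^{t_{12}}Y$, combining this with $[e^{i\pi t_{12}},\varepsilon^{t_{12}}]=0$ shows that the first equation of \eqref{eqn:rel between L and Adt_12} is equivalent to
\begin{equation*}
[e^{i\pi t_{12}},Y]\,=\,[\varepsilon^{-t_{12}},\underline{\L}].
\end{equation*}

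Next I would identify $Y$ with the pseudonatural homotopy component $\bigl[e^{i\pi t}\bigr]_{\varepsilon^{-t_{12}}}$. Expanding $\varepsilon^{-t_{12}}=\sum_{n\geq 0}\tfrac{(-\ln\varepsilon)^n}{n!}t_{12}^n$ and iterating \eqref{eqn:dubindex splits prods} exactly as in the derivation of \eqref{eqn:e^L series} (producing $t_{t_{12}^n}=\sum_k t_{12}^k\,\L\,t_{12}^{n-1-k}$ and then $(t^m)_{t_{12}^n}=\sum_j t_{(12)3}^j\,t_{t_{12}^n}\,t_{(12)3}^{m-1-j}$), the sum assembled over $m,n$ reproduces term-by-term the series defining $Y$. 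Simultaneously \eqref{eqn:e^L series} gives $\underline{\L}=\bigl[e^{i\pi t}\bigr]_{e^{i\pi t_{12}}}$.

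The crux is then a one-line invocation of pseudonaturality: for any pseudonatural endomorphism $\xi$ and any pair of 1-cells $f,g$ satisfying $fg=gf$, the Leibniz rule \eqref{eqn:dubindex splits prods} forces $\xi_{fg}=\xi_f\,g+f\,\xi_g$ and $\xi_{gf}=\xi_g\,f+g\,\xi_f$, whose equality rearranges to $[f,\xi_g]=[g,\xi_f]$. Applied term-wise to the Taylor expansion of $\xi=e^{i\pi t}$ with $f=e^{i\pi t_{12}}$ and $g=\varepsilon^{-t_{12}}$---which commute since they are both formal power series in $t_{12}$---this delivers $[e^{i\pi t_{12}},Y]=[\varepsilon^{-t_{12}},\underline{\L}]$ as required. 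The second equation of \eqref{eqn:rel between L and Adt_12} is proved identically, replacing $t_{12},\L$ by $t_{23},\R$. The principal obstacle is purely bookkeeping: the term-wise identification $Y=\bigl[e^{i\pi t}\bigr]_{\varepsilon^{-t_{12}}}$ requires careful tracking of both iterations of the Leibniz rule, but once this identification is in hand the symmetry step is immediate.
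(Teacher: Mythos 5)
Your proposal is correct and follows essentially the same route as the paper: you identify $t_\varepsilon^{t_{12}}=\left(e^{i\pi t}\right)_{\varepsilon^{-t_{12}}}\varepsilon^{t_{12}}$ via the derivation of \eqref{eqn:e^L series}, reduce the claim by conjugation to an identity between homotopy components, and settle it with the product-splitting rule \eqref{eqn:dubindex splits prods} applied to the commuting factors $e^{i\pi t_{12}}$ and $\varepsilon^{-t_{12}}$ (the paper phrases this as both sides equalling $\left(e^{i\pi t}\right)_{e^{(i\pi-\ln\varepsilon)t_{12}}}$, which is your symmetry identity $[f,\xi_g]=[g,\xi_f]$ in disguise). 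No gaps.
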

\begin{proof}
The derivation \eqref{eqn:e^L series} makes it obvious that 
\begin{equation}
t^{t_{12}}_\varepsilon=\left(e^{i\pi t}\right)_{\varepsilon^{-t_{12}}}\varepsilon^{t_{12}}
\end{equation}
hence \eqref{eqn:rel between L and Adt_12} is equivalent to
\begin{equation}
[e^{i\pi t}]_{e^{i\pi t_{12}}}\varepsilon^{-t_{12}}+e^{i\pi t_{12}}[e^{i\pi t}]_{\varepsilon^{-t_{12}}}=\varepsilon^{-t_{12}}[e^{i\pi t}]_{e^{i\pi t_{12}}}+[e^{i\pi t}]_{\varepsilon^{-t_{12}}}e^{i\pi t_{12}}
\end{equation}
but, again, we can use the fact that the homotopy components split products \eqref{eqn:dubindex splits prods} to see that each side is equal to $(e^{i\pi t})_{e^{(i\pi-\ln\varepsilon) t_{12}}}$.
\end{proof}

%%%%%%%%%%%%%%%%%%%%%%%%%%%%%%%%%%%%%%%%%%%%%%%%%%%%%%%%%

\subsection{The 2-loop}\label{sub:2-loop}
Lemma \ref{lem:rel between Ad and e^L} (and an obvious analogue for the right congruence $\underline{\R}$) permits us to substitute \eqref{eqn:rel between L and Adt_12} into \eqref{eqn:suggestive Breen} giving us
\begin{align}\label{eqn:Breen as 2hol}
\underline{\L}+W^{p_\mathbf{I}\Q_{213}p_\mathbf{V}c_\mathbf{VI}}-W^{p_\mathbf{I}c_\mathbf{II}[p_\mathbf{III}\circ\iota]\Q_{321}}+W^{p_\mathbf{I}}\underline{\R}W^{c_\mathbf{I}}+W^{\Q_{312}p_\mathbf{III}c_\mathbf{II}c_\mathbf{I}}=W^{c_\mathbf{VI}[p_\mathbf{V}\circ\iota]\Q c_\mathbf{I}}
\end{align}

\subsubsection{The 2-paths for the congruences}
Let us show how to encode the congruences, $\underline{\L}$ and $\underline{\R}$, as 2-holonomies of specific 2-paths. We treat the left congruence $\underline{\L}$ which, by comparing its series expression \eqref{eqn:e^L series} with that of $W^{\P_\mathbf{IV}}$ in Remark \ref{rem:1st horinterpol glob condition}, is easily seen as the following specific instance of the exchanger\footnote{Cf. \cite[Construction 2.11, Proposition 5.3 and Remark 5.4]{Me}.}, 
\begin{equation}\label{left congruence}
\left(e^{i\pi t}\right)_{e^{i\pi t_{12}}}=\ast^2_{\left(\Id_{\otimes\,},\,e^{i\pi t}\,\boxtimes\,\Id_{\id_\CC}\right),\left(e^{i\pi t},\,\Id_{\otimes\,\boxtimes\,\id_\CC}\right)}:e^{i\pi t_{12}}e^{i\pi t_{(12)3}}\Rrightarrow e^{i\pi t_{(12)3}}e^{i\pi t_{12}}\quad.
\end{equation}
Now we have a hint of how to encode such a congruence as a 2-holonomy. Recall that \eqref{eq:8 of first partras} and \eqref{eq:W^q_VI}, respectively, gives us:
\begin{equation}
W^{c_\mathbf{VI}}\equiv e^{i\pi t_{12}}H_\varepsilon\left(t_{12}\,,\overline{t_{13}}\right)\qquad,\qquad W^{q_\mathbf{VI}}\equiv H_\varepsilon\left(\overline{t_{12}},t_{13}\right)^{-1}e^{i\pi(t_{13}+t_{23})}\quad.
\end{equation}
Remark \ref{rem:2nd explicit parallel transport} mentions the parametric-translation invariance of parallel transport along $c_\mathbf{II}$; this property obviously also holds for $c_\mathbf{VI}$. Define
\begin{subequations}
\begin{equation}
q_\searrow(r):=\left(\frac{2\varepsilon}{\varepsilon+(\varepsilon-2)e^{i\pi r}}\,,\,\left[\frac{\varepsilon}{2}+\left(1-\frac{\varepsilon}{2}\right)e^{i\pi r}\right]a\right)
\end{equation}
along which the connection 1-form $\nabla$ is given by 
\begin{align}
\nabla_{q_\searrow}\equiv i\pi(t_{23}+t_{13})\dd r+\varepsilon i\pi\left[\frac{t_{12}+t_{23}}{\varepsilon+(\varepsilon-2)e^{i\pi r}}-\frac{t_{12}+2t_{23}+t_{13}}{\varepsilon+(2-\varepsilon)e^{i\pi r}}\right]\dd r\quad,
\end{align}
and a similar argument to that of \cite[Lemma 21]{BRW} tells us that
\begin{equation}
W^{q_\searrow}\equiv e^{i\pi(t_{13}+t_{23})}H_\varepsilon(t_{12}+t_{23},t_{23}+t_{13})\quad.    
\end{equation}
\end{subequations}
\begin{propo}\label{propo:left cong 2-hol}
The 2-path $\big(c_\mathbf{VI},(\varepsilon-1)a\big)\,q_\mathbf{VI}\xRightarrow{\P_\L}q_\searrow\big(c_\mathbf{VI}(r+1),a\big)$ given by 
\begin{equation}\label{eqn:left cong 2-path}
\P_\L(s,r):=\begin{cases}
      q_\mathbf{VI}(2r)\,, & 0\leq r\leq\frac{1-s}{2}\\
      \left(c_\mathbf{VI}(2r+2s-1),c_\mathbf{IV}(s-1)\varepsilon a\right)\,, & \frac{1-s}{2}\leq r\leq1-\frac{s}{2}\\
      q_\searrow(2r-1)\,, & 1-\frac{s}{2}\leq r\leq1
    \end{cases}\quad,
\end{equation}
\begin{figure}[H]
\centering
\scalebox{0.2}{\includesvg[width=1000pt]{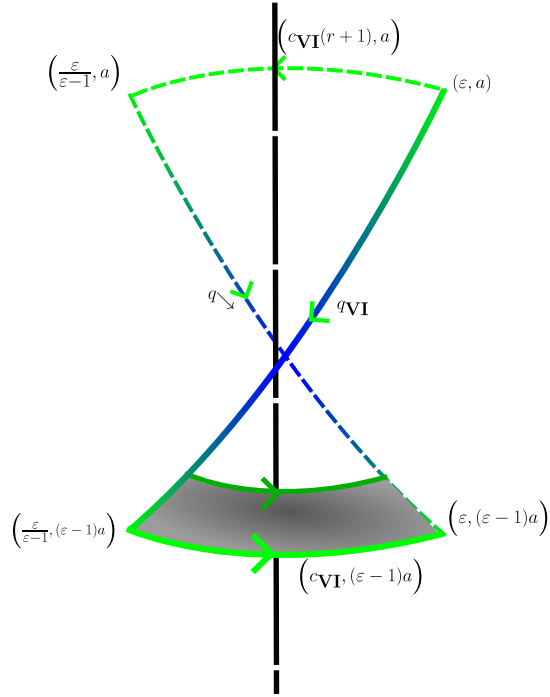}}
\caption{The horizontally-interpolative 2-path for the left congruence}
\label{fig:congruence2path}
\end{figure}
is such that
\begin{equation}
    \lim_{\varepsilon\to0}W^{\P_\L}=\underline{\L}\quad.
\end{equation}
\end{propo}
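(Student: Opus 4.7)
The plan is to compute the iterated integral defining $W^{\P_\L}$ directly and to identify its $\varepsilon\to 0$ limit with the series \eqref{eqn:e^L series} for $\underline{\L}$, mirroring the strategy used for the vertically-interpolative 2-paths in Subsection \ref{subsec:interpolative 2-paths}.

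First, note that $\partial_s\P_\L$ vanishes off the middle strip $\frac{1-s}{2}\leq r\leq 1-\frac{s}{2}$, where $\P_\L(s,r)=\bigl(c_\mathbf{VI}(2r+2s-1),\,c_\mathbf{IV}(s-1)\varepsilon a\bigr)$. A direct calculation using \eqref{sub:pullback formal 2-connection} gives
\begin{equation*}
\Delta\!\left[\tfrac{\partial\P_\L}{\partial s},\tfrac{\partial\P_\L}{\partial r}\right]\,=\,-4\,c'_\mathbf{VI}(2r+2s-1)\,\frac{c'_\mathbf{IV}(s-1)}{c_\mathbf{IV}(s-1)}\!\left(\tfrac{\L}{z}+\tfrac{\R}{z-1}\right)\quad,
\end{equation*}
with $z=c_\mathbf{VI}(2r+2s-1)$. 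The crucial limit is $c'_\mathbf{IV}(s-1)/c_\mathbf{IV}(s-1)\to -i\pi$ as $\varepsilon\to 0$, exactly as exploited in Construction \ref{con:lim 2hol of 3rd verinterpol}.

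Next, I would decompose the cross-sectional parallel transport $W^{\P_\L^s}_{r0}$ (and symmetrically $W^{\P_\L^s}_{1r}$) into three consecutive pieces: transport along $q_\mathbf{VI}$ up to $r=\tfrac{1-s}{2}$, transport along the horizontal arc of $c_\mathbf{VI}$ at constant $v$ (governed by the 1-form $\Gamma\equiv(t_{12}/z+t_{23}/(z-1))\,dz$), and transport along $q_\searrow$ beyond $r=1-\tfrac{s}{2}$. Substituting into \eqref{eq:CM def of 2hol for gl_F} and solving the $r$-integral via a BRW-style formal ODE (paralleling Proposition \ref{propo:solved 1st 2hol}), then taking $\varepsilon\to 0$ using both $c'_\mathbf{IV}/c_\mathbf{IV}\to -i\pi$ and the standard $\lim_{\varepsilon\to 0}H_\varepsilon=1$, should yield a nested series in $(i\pi)$ and $\L,\R$ conjugated by $e^{i\pi t_{(12)3}}$-type factors coming from $W^{q_\mathbf{VI}}$ and $W^{q_\searrow}$ in Remarks \ref{rem:explicit parallel transports} and \ref{rem:2nd explicit parallel transport}.

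The main obstacle is the reindexing combinatorics needed to recognise this nested series as precisely the double series \eqref{eqn:e^L series}; as in Constructions \ref{con:lim 2hol of 3rd verinterpol} and \ref{con:limit of 4th 2hol}, this should reduce to a binomial-convolution identity obtained by expanding $(1-x)^m(1-x)^{-n}$. As an a priori consistency check, globularity in the limit forces
\begin{equation*}
\lim_{\varepsilon\to0}W^{\P_\L}\,:\,e^{i\pi t_{12}}e^{i\pi t_{(12)3}}\Rrightarrow e^{i\pi t_{(12)3}}e^{i\pi t_{12}}\quad,
\end{equation*}
which matches the source and target of $\underline{\L}$ from \eqref{left congruence} and pins down the correct form to target.
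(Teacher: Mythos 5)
Your proposal is essentially the paper's strategy: the paper also computes $W^{\P_\L}$ directly from \eqref{eq:CM def of 2hol for gl_F}, restricts to the middle strip where $\partial_s\P_\L\neq(0,0)$, obtains the same expression for $\Delta\big[\partial_s\P_\L,\partial_r\P_\L\big]$, and decomposes the cross-sectional transports into the three pieces you describe. The difference is in how the limit is extracted. The paper takes $\varepsilon\to0$ \emph{inside} the integral first: the integrand then collapses to $2(i\pi)^2\L$ (the $\R$-contribution dies because $\tfrac{1}{z-1}\partial_r z\to0$ along $c_\mathbf{VI}$, exactly as in Construction \ref{con:limit of 4th 2hol} — a point your write-up glosses over but which is what pins the answer to $\underline{\L}$ rather than an $\L,\R$ mixture), and the conjugating transports collapse to pure exponentials $e^{i\pi(\cdot)t_{12}}$ and $e^{i\pi(\cdot)t_{(12)3}}$; after that, the inner $r$-integral and the outer $s$-integral (after the substitution $s'=1-s$) are each evaluated by the elementary conjugation-integral identity of Lemma \ref{lem:solve 1st integral}, and the double series \eqref{eqn:e^L series} appears directly, with no operator inversion and no binomial-convolution reindexing. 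Your plan of solving the $r$-equation at finite $\varepsilon$ à la Proposition \ref{propo:solved 1st 2hol} and then reindexing in the spirit of Constructions \ref{con:lim 2hol of 3rd verinterpol} and \ref{con:limit of 4th 2hol} would also get there, but it is substantially heavier than necessary here, and those steps are currently only asserted ("should yield", "should reduce"); to make the argument complete you would need to carry out that finite-$\varepsilon$ bookkeeping, or simply adopt the limit-first route, for which your globularity consistency check correctly anticipates the source and target of \eqref{left congruence}.
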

\begin{proof}
The 2-holonomy is given, as always, by
\begin{align}
W^{\P_\L}=\int_0^1\int_\frac{1-s}{2}^{1-\frac{s}{2}}W_{1r}^{\P_\L^s}\Delta\left[\frac{\partial\P_\L}{\partial s},\frac{\partial\P_\L}{\partial r}\right]W_{r0}^{\P_\L^s}\dd r\dd s\quad.
\end{align}
We have, for $\frac{1-s}{2}\leq r\leq1-\frac{s}{2}$:
\begin{subequations}
\begin{alignat}{6}
z&=c_\mathbf{VI}(2r+2s-1)=\frac{2\varepsilon}{\varepsilon+(2-\varepsilon)e^{-2i\pi(s+r)}}&&\implies\frac{\partial z}{\partial r}=\frac{4i\pi\varepsilon(2-\varepsilon)e^{-2i\pi(s+r)}}{[\varepsilon+(2-\varepsilon)e^{-2i\pi(s+r)}]^2}\,\,&&&,\\
v&=c_\mathbf{IV}(s-1)\varepsilon a=\left[\frac{\varepsilon}{2}+\left(\frac{\varepsilon}{2}-1\right)e^{-i\pi s}\right]a&&\implies\frac{\partial v}{\partial s}=i\pi\left(1-\frac{\varepsilon}{2}\right)e^{-i\pi s}a\,\,&&&.
\end{alignat}
\end{subequations}
Together this gives us
\begin{align}
\Delta\left[\frac{\partial\P_\L}{\partial s},\frac{\partial\P_\L}{\partial r}\right]=\frac{[i\pi(2-\varepsilon)]^2}{[\frac{\varepsilon}{2}(e^{is\pi}+1)-1][\varepsilon(1-e^{-2i\pi(s+r)})-2]}\left[\L+\frac{2\varepsilon\R}{\varepsilon+(\varepsilon-2)e^{-2i\pi(s+r)}}\right]\,,
\end{align}
which has the very nice and simple limit 
\begin{equation}
\lim_{\varepsilon\to0}\left(\Delta\left[\frac{\partial\P_\L}{\partial s},\frac{\partial\P_\L}{\partial r}\right]\right)=2(i\pi)^2\L\quad.
\end{equation}
For $\frac{1-s}{2}\leq r\leq1-\frac{s}{2}$, we also have:
\begin{subequations}
\begin{alignat}{6}
W_{r0}^{\P_\L^s}&\equiv W_{(2r+2s-1)s}^{c_\mathbf{VI}}W_{(1-s)0}^{q_\mathbf{VI}}\quad&&\implies\quad\lim_{\varepsilon\to0}\left(W_{r0}^{\P_\L^s}\right)\equiv e^{i\pi(2r+s-1)t_{12}}e^{i\pi(1-s)t_{(12)3}}\,&&&,\\
W_{1r}^{\P_\L^s}&\equiv W_{1(1-s)}^{q_\searrow}W_{(1+s)s}^{c_\mathbf{VI}}W_{(1-s)0}^{q_\mathbf{VI}}\quad&&\implies\quad\lim_{\varepsilon\to0}\left(W_{1r}^{\P_\L^s}\right)\equiv e^{i\pi st_{(12)3}}e^{i\pi(2-2r-s)t_{12}}\,&&&.
\end{alignat}
\end{subequations}
Now we can say 
\begin{align}
\lim_{\varepsilon\to0}\left(W^{\P_\L}\right)=&\int_0^1i\pi e^{i\pi st_{(12)3}}e^{i\pi t_{12}}\left(\int_\frac{1-s}{2}^{1-\frac{s}{2}}2i\pi e^{-i\pi(2r+s-1)t_{12}}\L e^{i\pi(2r+s-1)t_{12}}\dd r\right)e^{i\pi(1-s)t_{(12)3}}\dd s\nn\\
=&\int_0^1i\pi e^{i\pi st_{(12)3}}\sum_{n=1}^\infty\frac{(i\pi)^n}{n!}\sum_{k=0}^{n-1}t_{12}^k\L t_{12}^{n-1-k}e^{i\pi(1-s)t_{(12)3}}\dd s\nn\\
=&e^{i\pi t_{(12)3}}\int_0^1i\pi e^{-i\pi st_{(12)3}}\sum_{n=1}^\infty\frac{(i\pi)^n}{n!}\sum_{k=0}^{n-1}t_{12}^k\L t_{12}^{n-1-k}e^{i\pi st_{(12)3}}\dd s\nn\\
=&\sum_{m,n\in\bbN^*}\frac{(i\pi)^{m+n}}{m!n!}\sum_{j=0}^{m-1}\sum_{k=0}^{n-1}t_{(12)3}^jt_{12}^k\L t_{12}^{n-1-k}t_{(12)3}^{m-1-j}\quad,
\end{align}
where the $2^\mathrm{nd}$ and $4^\mathrm{th}$ equalities used arguments completely analogous to that of Lemma \ref{lem:solve 1st integral} whereas the $3^\mathrm{rd}$ equality used the substitution $s'=1-s$.
\end{proof}
%%%%%%%%%%%%%%%%%%%%%%%%%%%%%%%%%%%%%%%%%%%%%%%%%%%%%%%%%%%%%

\subsubsection*{The Breen equation as an equality of two sequences of modifications}
Let us interpret each side of the Breen equation \eqref{Breen polytope in t only} as a sequence of lateral composites of modifications,
% https://q.uiver.app/#q=WzAsNixbMSwxLCJlXntcXHBpIGlcXGhiYXIgdF97MTJ9fWVee1xccGkgaVxcaGJhciB0X3soMTIpM319Il0sWzAsMCwiZV57XFxwaSBpXFxoYmFyIHRfeygxMikzfX1lXntcXHBpIGlcXGhiYXIgdF97MTJ9fSJdLFswLDEsIlxcUGhpKHRfezIzfVxcLCx0X3sxMn0pZV57XFxwaSBpXFxoYmFyIHRfezIzfX1cXFBoaSh0X3sxM31cXCwsdF97MjN9KWVee1xccGkgaVxcaGJhciB0X3sxM319XFxQaGkodF97MTJ9XFwsLHRfezEzfSllXntcXHBpIGlcXGhiYXIgdF97MTJ9fSJdLFswLDIsIlxcUGhpKHRfezIzfSx0X3sxMn0pZV57XFxwaSBpXFxoYmFyIHRfezIzfX1lXntcXHBpIGlcXGhiYXIgdF97MSgyMyl9fVxcUGhpKHRfezEyfSx0X3syM30pIl0sWzAsMywiXFxQaGkodF97MjN9XFwsLHRfezEyfSllXntcXHBpIGlcXGhiYXIgdF97MSgyMyl9fWVee1xccGkgaVxcaGJhciB0X3syM319XFxQaGkodF97MTJ9XFwsLHRfezIzfSkiXSxbMSwyLCJlXntcXHBpIGlcXGhiYXIgdF97MTJ9fVxcUGhpKHRfezEzfSx0X3sxMn0pZV57XFxwaSBpXFxoYmFyIHRfezEzfX1cXFBoaSh0X3syM30sdF97MTN9KWVee1xccGkgaVxcaGJhciB0X3syM319XFxQaGkodF97MTJ9LHRfezIzfSkiXSxbMCwxLCJcXHVuZGVybGluZXtcXG1hdGhjYWx7TH19IiwyLHsibGV2ZWwiOjN9XSxbMSwyLCJcXFBoaSh0X3syM31cXCwsdF97MTJ9KVxcbWF0aGJme1J9X3syMTN9XFxQaGkodF97MTJ9XFwsLHRfezEzfSllXntcXHBpIGlcXGhiYXIgdF97MTJ9fSIsMix7ImxldmVsIjozfV0sWzIsMywiXFxQaGkodF97MjN9LHRfezEyfSllXntcXHBpIGlcXGhiYXIgdF97MjN9fVxcUGhpKHRfezEzfSx0X3syM30pXFxvdmVybGVmdGFycm93e1xcbWF0aGJme1J9X3szMjF9fSIsMix7ImxldmVsIjozfV0sWzMsNCwiXFxQaGkodF97MjN9XFwsLHRfezEyfSlcXHVuZGVybGluZXtcXG1hdGhjYWx7Un19XFxQaGkodF97MTJ9XFwsLHRfezIzfSkiLDIseyJsZXZlbCI6M31dLFs0LDUsIlxcbWF0aGJme1J9X3syMzF9XFxQaGkodF97MjN9LHRfezEzfSllXntcXHBpIGlcXGhiYXIgdF97MjN9fVxcUGhpKHRfezEyfSx0X3syM30pIiwyLHsibGV2ZWwiOjN9XSxbMCw1LCJlXntcXHBpIGlcXGhiYXIgdF97MTJ9fVxcUGhpKHRfezEzfSx0X3sxMn0pXFxtYXRoYmZ7Un1cXFBoaSh0X3sxMn0sdF97MjN9KSIsMCx7ImxldmVsIjozfV1d
\begin{equation}
\begin{tikzcd}
	{e^{i\pi t_{(12)3}}e^{i\pi t_{12}}} \\
	{\Phi_{321}e^{i\pi t_{23}}\Phi_{132}e^{i\pi t_{13}}\Phi_{213}e^{i\pi t_{12}}} & {e^{i\pi t_{12}}e^{i\pi t_{(12)3}}} \\
	{\Phi_{321}e^{i\pi t_{23}}e^{i\pi t_{1(23)}}\Phi } & {e^{i\pi t_{12}}\Phi_{312}e^{i\pi t_{13}}\Phi_{231}e^{i\pi t_{23}}\Phi } \\
	{\Phi_{321}e^{i\pi t_{1(23)}}e^{i\pi t_{23}}\Phi }
	\arrow["{\Phi_{321}\mathbf{R}_{213}\Phi_{213}e^{i\pi t_{12}}}"', Rightarrow, scaling nfold=3, from=1-1, to=2-1]
	\arrow["{\Phi_{321}e^{i\pi t_{23}}\Phi_{132}\overleftarrow{\mathbf{R}_{321}}}"', Rightarrow, scaling nfold=3, from=2-1, to=3-1]
	\arrow["{\underline{\mathcal{L}}}"', Rightarrow, scaling nfold=3, from=2-2, to=1-1]
	\arrow["{e^{i\pi t_{12}}\Phi_{312}\mathbf{R}\Phi }", Rightarrow, scaling nfold=3, from=2-2, to=3-2]
	\arrow["{\Phi_{321}\underline{\mathcal{R}}\Phi }"', Rightarrow, scaling nfold=3, from=3-1, to=4-1]
	\arrow["{\mathbf{R}_{231}\Phi_{231}e^{i\pi t_{23}}\Phi }"', Rightarrow, scaling nfold=3, from=4-1, to=3-2]
\end{tikzcd}
\end{equation}
We already know the 2-paths which encode the congruences and pre-hexagonator but for the permutations such as $\RR_{213}$ we must calculate the corresponding 2-path. We start with $\tau_{12}\Q$:
\begin{subequations}
\begin{alignat}{6}
\tau_{12}p_\mathbf{I}(r)&=\left(\frac{\varepsilon-1+r(1-2\varepsilon)}{\varepsilon+r(1-2\varepsilon)},[\varepsilon+r(1-2\varepsilon)]a\right)\quad&&,\\
\tau_{12}p_\mathbf{II}(r)&=\left(\frac{1}{2}+\left(\frac{1}{2}-\frac{1}{\varepsilon}\right)e^{-i\pi r},\frac{a}{\left(\frac{1}{\varepsilon}-\frac{1}{2}\right)e^{-i\pi r}+\frac{1}{2}}\right)\quad&&,\\
\tau_{12}p_\mathbf{III}(r)&=\left(\frac{1}{\varepsilon+r(1-2\varepsilon)},\frac{[\varepsilon+r(1-2\varepsilon)]a}{\varepsilon-1}\right)\quad&&,\\
\tau_{12}q_\mathbf{IV}(r)&=\left(1-\frac{\varepsilon}{\frac{\varepsilon}{2}-\left(1-\frac{\varepsilon}{2}\right)e^{-i\pi r}}\,,\,-a\right)\quad&&,\\
\tau_{12}q_\mathbf{V}(r)&=\big(\varepsilon+r(1-2\varepsilon),-a\big)\quad&&,\\
\tau_{12}q_\mathbf{VI}(r)&=\left(\frac{\varepsilon}{\frac{\varepsilon}{2}+\left(\frac{\varepsilon}{2}-1\right)e^{i\pi r}},\left[\left(1-\frac{\varepsilon}{2}\right)e^{i\pi r}-\frac{\varepsilon}{2}\right]a\right)\quad&&.
\end{alignat}
\end{subequations}
\begin{figure}[H]
\centering
\scalebox{0.35}{\includesvg[width=1000pt]{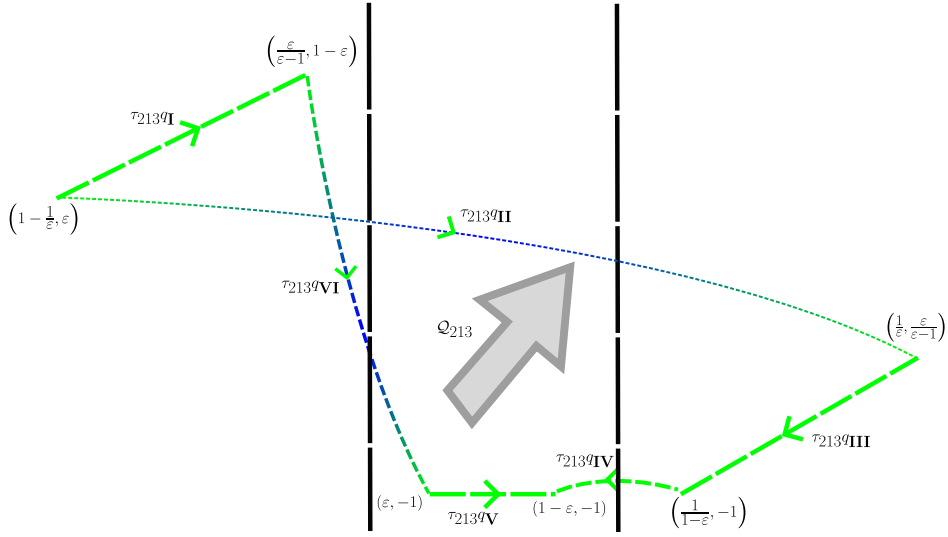}}
\caption{$\Q_{213}:=\tau_{12}\Q$ with $a=1$}
\label{fig:213}
\end{figure}
\begin{subequations}
\begin{alignat}{6}
\tau_{13}p_\mathbf{I}(r)&=\big(\varepsilon+r(1-2\varepsilon),-a\big)\quad&&,\\
\tau_{13}p_\mathbf{II}(r)&=\left(\frac{\varepsilon}{\left(\frac{1}{\varepsilon}-\frac{1}{2}\right)e^{-i\pi r}+\frac{1}{2}}\,,\,-a\right)\quad&&,\\
\tau_{13}p_\mathbf{III}(r)&=\left(\frac{-\varepsilon+r(2\varepsilon-1)}{(1-r)(1-\varepsilon)+r\varepsilon},\frac{r\varepsilon a}{\varepsilon-1}+(r-1)a\right)\quad&&,\\
\tau_{13}q_\mathbf{IV}(r)&=\left(\frac{1}{2}+\left(\frac{1}{2}-\frac{1}{\varepsilon}\right)e^{-i\pi r},\frac{-a\varepsilon}{(1-\frac{\varepsilon}{2})e^{-i\pi r}-\frac{\varepsilon}{2}}\right)\quad&&,\\
\tau_{13}q_\mathbf{V}(r)&=\left(\frac{1}{1-\varepsilon+r(2\varepsilon-1)},[1-\varepsilon+r(2\varepsilon-1)]a\right)\quad&&,\\
\tau_{13}q_\mathbf{VI}(r)&=\left(1+\frac{\varepsilon}{\left(\frac{\varepsilon}{2}-1\right)e^{i\pi r}-\frac{\varepsilon}{2}},\left[\left(\frac{\varepsilon}{2}-1\right)e^{i\pi r}-\frac{\varepsilon}{2}\right]a\right)\quad&&.
\end{alignat}
\end{subequations}
\begin{figure}[H]
\centering
\scalebox{0.4}{\includesvg[width=1000pt]{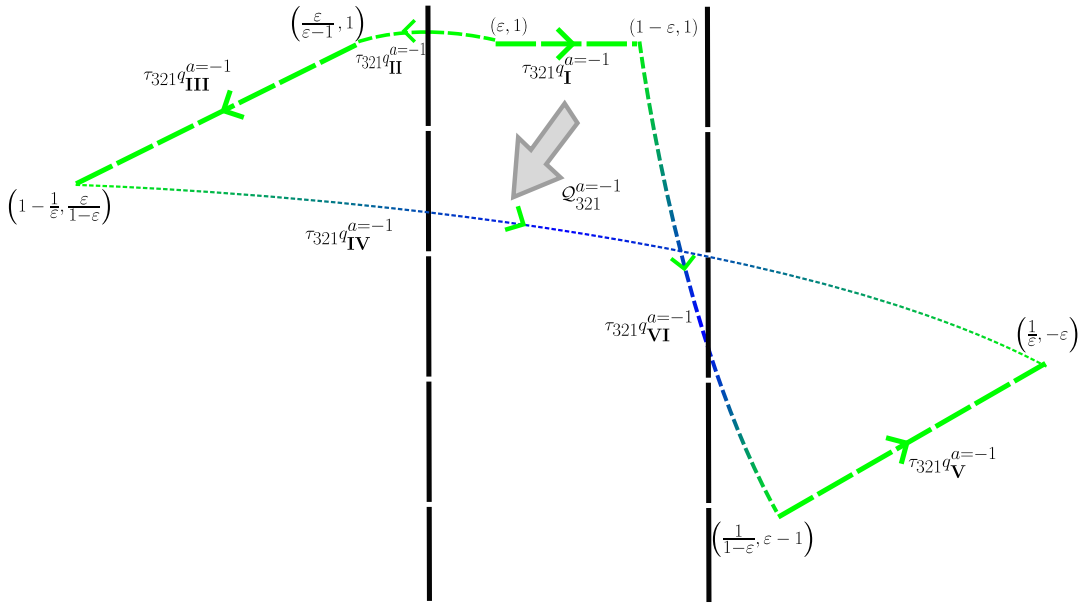}}
\caption{$\Q_{321}:=\tau_{13}\Q$ with $a=-1$}
\label{fig:321}
\end{figure}
\begin{subequations}
\begin{alignat}{6}
\tau_{(12)3}p_\mathbf{I}(r)&=\left(\frac{1}{\varepsilon+r(1-2\varepsilon)},[(2\varepsilon-1)r-\varepsilon]a\right)\quad&&,\\
\tau_{(12)3}p_\mathbf{II}(r)&=\left(\frac{1}{2}+\left(\frac{1}{\varepsilon}-\frac{1}{2}\right)e^{-i\pi r},\frac{a}{\left(\frac{1}{2}-\frac{1}{\varepsilon}\right)e^{-i\pi r}-\frac{1}{2}}\right)\quad&&,\\
\tau_{(12)3}p_\mathbf{III}(r)&=\left(\frac{(1-r)(1-\varepsilon)+r\varepsilon}{-\varepsilon+r(2\varepsilon-1)},\frac{[\varepsilon+r(1-2\varepsilon)]a}{1-\varepsilon}\right)\quad&&,\\
\tau_{(12)3}q_\mathbf{IV}(r)&=\left(\frac{\varepsilon}{\frac{\varepsilon}{2}+\left(\frac{\varepsilon}{2}-1\right)e^{-i\pi r}}\,,\,a\right)\quad&&,\\
\tau_{(12)3}q_\mathbf{V}(r)&=\left(1-\varepsilon+r(2\varepsilon-1),a\right)\quad&&,\\
\tau_{(12)3}q_\mathbf{VI}(r)&=\left([c_\mathbf{II}\circ\iota](r),\left[\left(\frac{\varepsilon}{2}-1\right)e^{i\pi r}+\frac{\varepsilon}{2}\right]a\right)\quad&&.
\end{alignat}
\end{subequations}
\begin{figure}[H]
    \centering
    \scalebox{0.33}{\includesvg[width=1000pt]{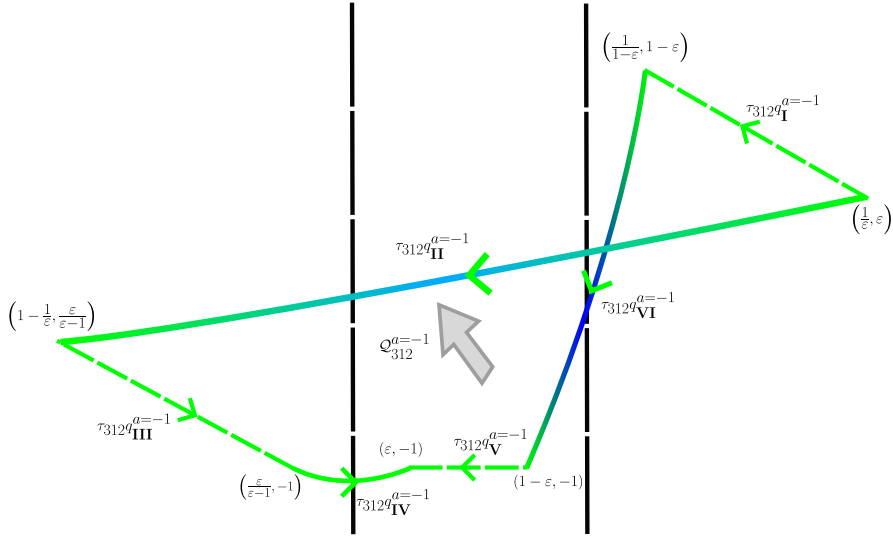}}
    \caption{$\Q_{312}:=\tau_{(12)3}\Q$ with $a=-1$}
    \label{fig:312}
\end{figure}
\begin{constr}\label{con:the Breen 2-loop}
\begin{enumerate}
\item We begin with the 1-path $(c_\mathbf{VI},\varepsilon-1)\,q_\mathbf{VI}$ and apply $\P_\L$ to get to the 1-path $q_\searrow\big(c_\mathbf{VI}(r+1),1\big)$ picking up $\underline{\L}$ in the limit $\varepsilon\to0$.
\begin{figure}[H]
    \centering
    \scalebox{0.13}{\includesvg[width=1000pt]{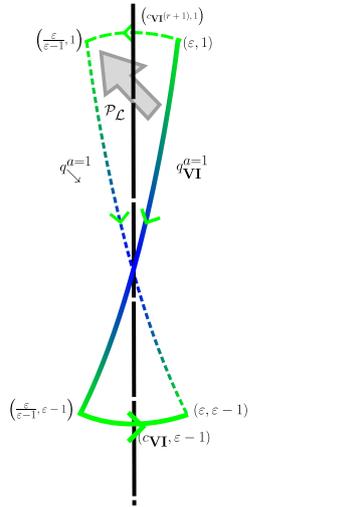}}
    \caption{Left congruence with $a=1$}
    \label{fig:leftor}
\end{figure}
\item We modify the 1-path $q_\searrow$ to the 1-path $(\varepsilon,r\varepsilon-1)\,\tau_{12}q_\mathbf{VI}\,\big(\frac{\varepsilon}{\varepsilon-1},1-r\varepsilon\big)$ as in
\begin{figure}[H]
    \centering
    \scalebox{0.4}{\includesvg[width=1000pt]{whiskered213}}
    \caption{A harmless 2-path followed by the whiskering of $\Q_{213}$}
    \label{fig:whiskered213}
\end{figure}
Such a 2-path gives trivial 2-holonomy under $\lim_{\varepsilon\to0}$\footnote{Whenceforth we call these \textbf{harmless 2-paths}.}. Modifying the latter 1-path to
\begin{equation}
(\varepsilon,r\varepsilon-1)\,[\tau_{12}q_\mathbf{V}\circ\iota]\,\tau_{12}q_\mathbf{V}\,\tau_{12}q_\mathbf{VI}\,\tau_{12}p_\mathbf{I}\,[\tau_{12}p_\mathbf{I}\circ\iota]\,\left(\frac{\varepsilon}{\varepsilon-1},1-r\varepsilon\right)
\end{equation}
obviously gives trivial 2-holonomy. Now we modify to the 1-path
\begin{equation}
(\varepsilon,r\varepsilon-1)[\tau_{12}q_\mathbf{V}\circ\iota]\,\tau_{12}q_\mathbf{IV}\,\tau_{12}p_\mathbf{III}\,\tau_{12}p_\mathbf{II}\,[\tau_{12}p_\mathbf{I}\circ\iota]\left(\frac{\varepsilon}{\varepsilon-1},1-r\varepsilon\right)\big(c_\mathbf{VI}(r+1),1\big),
\end{equation}
picking up (under $\lim_{\varepsilon\to0}$, we suppress this qualifier from now on) $W^{p_\mathbf{I}}W^{\Q_{213}}W^{p_\mathbf{V}}W^{c_\mathbf{VI}}$.
\item We modify to the 1-path $\big(c_\mathbf{I}\circ\iota,\varepsilon-1\big)\big(c_\mathbf{II}(r+1),\varepsilon-1\big)[\tau_{13}q_\mathbf{V}^{a=-1}\circ\iota]\tau_{13}q_\mathbf{IV}^{a=-1}\tau_{13}p_\mathbf{III}^{a=-1}\tau_{13}p_\mathbf{II}^{a=-1}$ along a harmless 2-path as in
\begin{figure}[H]
    \centering
    \scalebox{0.4}{\includesvg[width=1000pt]{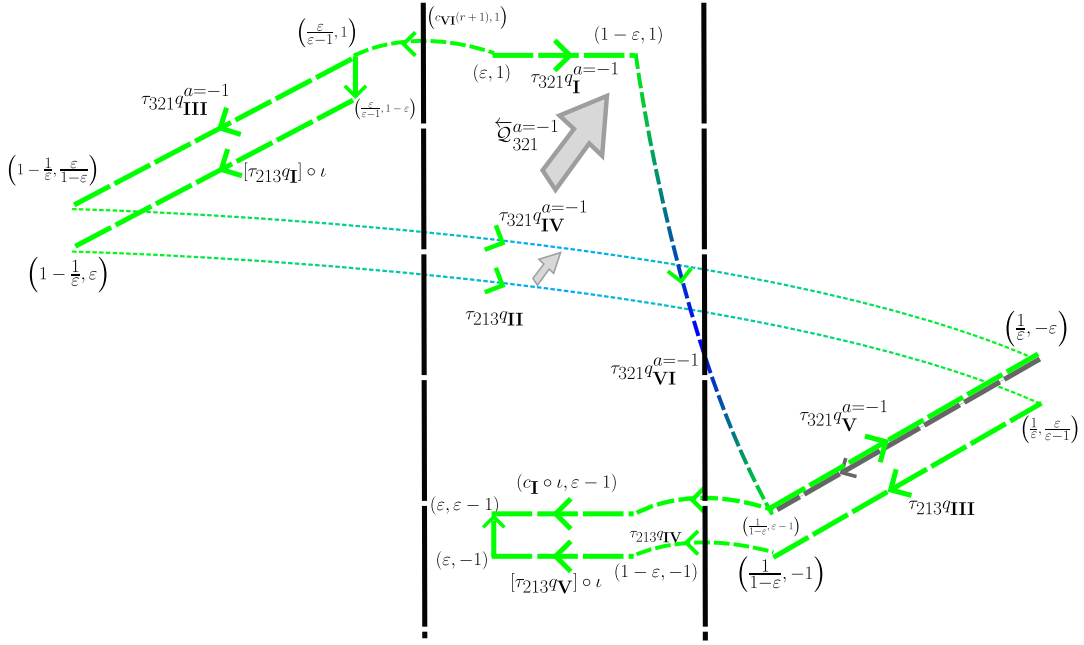}}
    \caption{Harmless 2-path followed by whiskered $\Q_{321}$ and collapsing a loop}
    \label{fig:whiskered321}
\end{figure}
We modify to the 1-path $\big(c_\mathbf{I}\circ\iota,\varepsilon-1\big)\big(c_\mathbf{II}(r+1),\varepsilon-1\big)[\tau_{13}q_\mathbf{V}^{a=-1}\circ\iota]\tau_{13}q_\mathbf{V}^{a=-1}\tau_{13}q_\mathbf{VI}^{a=-1}\tau_{13}p_\mathbf{I}^{a=-1}$ picking up 2-holonomy $-W^{p_\mathbf{I}}e^{i\pi t_{23}}\left(W^{p_\mathbf{III}}\right)^{-1}W^{\Q_{321}}$. We collapse $[\tau_{13}q_\mathbf{V}^{a=-1}\circ\iota]\tau_{13}q_\mathbf{V}^{a=-1}$ to the constant loop on $\left(\frac{1}{1-\varepsilon},\varepsilon-1\right)$ picking up trivial 2-holonomy.
\item We modify to the 1-path $\big(c_\mathbf{I}\circ\iota,\varepsilon-1\big)\,q_\swarrow\big(c_\mathbf{II},1\big)\tau_{13}p_\mathbf{I}^{a=-1}$ picking up $W^{p_\mathbf{I}}\underline{\R}W^{c_\mathbf{I}}$ as in
\begin{figure}[H]
    \centering
    \scalebox{0.15}{\includesvg[width=1000pt]{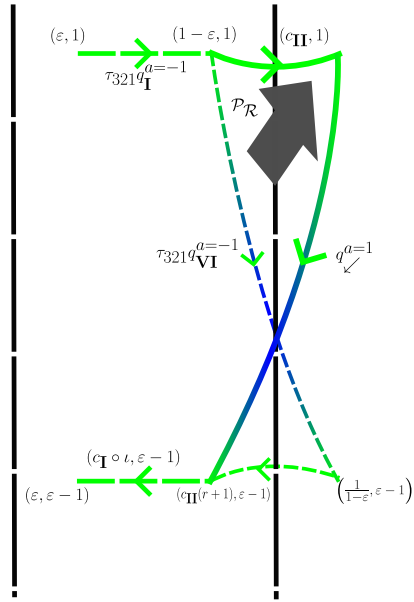}}
    \caption{Whiskered right congruence with $a=1$}
    \label{fig:rightor}
\end{figure}
\item We modify the 1-path $q_\swarrow$ to the 1-path $(\varepsilon,r\varepsilon-1)\tau_{(12)3}q_\mathbf{V}^{a=-1}\tau_{(12)3}q_\mathbf{VI}^{a=-1}\left(\frac{1}{1-\varepsilon},1-r\varepsilon\right)$ along a harmless 2-path as in
\begin{figure}[H]
    \centering
    \scalebox{0.4}{\includesvg[width=1000pt]{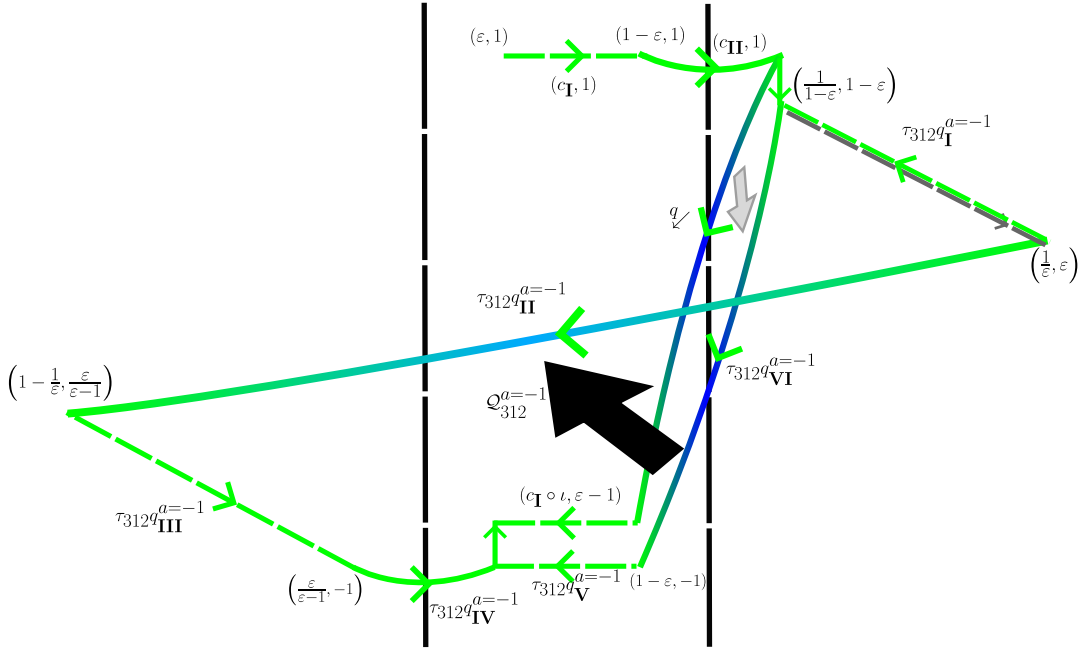}}
    \caption{Harmless 2-path followed by whiskered $\Q_{312}$}
    \label{fig:whiskered312}
\end{figure}
We modify $(\varepsilon,r\varepsilon-1)\tau_{(12)3}q_\mathbf{V}^{a=-1}\tau_{(12)3}q_\mathbf{VI}^{a=-1}\tau_{(12)3}p_\mathbf{I}^{a=-1}[\tau_{(12)3}p_\mathbf{I}^{a=-1}\circ\iota]\left(\frac{1}{1-\varepsilon},1-r\varepsilon\right)(c_\mathbf{II}c_\mathbf{I},1)$ to $(\varepsilon,r\varepsilon-1)\tau_{(12)3}q_\mathbf{IV}^{a=-1}\tau_{(12)3}p_\mathbf{III}^{a=-1}\tau_{(12)3}p_\mathbf{II}^{a=-1}[\tau_{(12)3}p_\mathbf{I}^{a=-1}\circ\iota]\left(\frac{1}{1-\varepsilon},1-r\varepsilon\right)(c_\mathbf{II}c_\mathbf{I},1)$ picking up the 2-holonomy $W^{\Q_{312}}W^{p_\mathbf{III}}e^{i\pi t_{23}}W^{c_\mathbf{I}}$.
\sk

Finally, we compare the sum of those previous 5 steps to the one where we simply modify the 1-path $(c_\mathbf{VI},\varepsilon-1)\,q_\mathbf{VI}$ to the 1-path $(c_\mathbf{VI},\varepsilon-1)[q_\mathbf{V}\circ\iota]\,q_\mathbf{IV}\,p_\mathbf{III}p_\mathbf{II}[p_\mathbf{I}\circ\iota]$
\begin{figure}[H]
\centering
\scalebox{0.4}{\includesvg[width=1000pt]{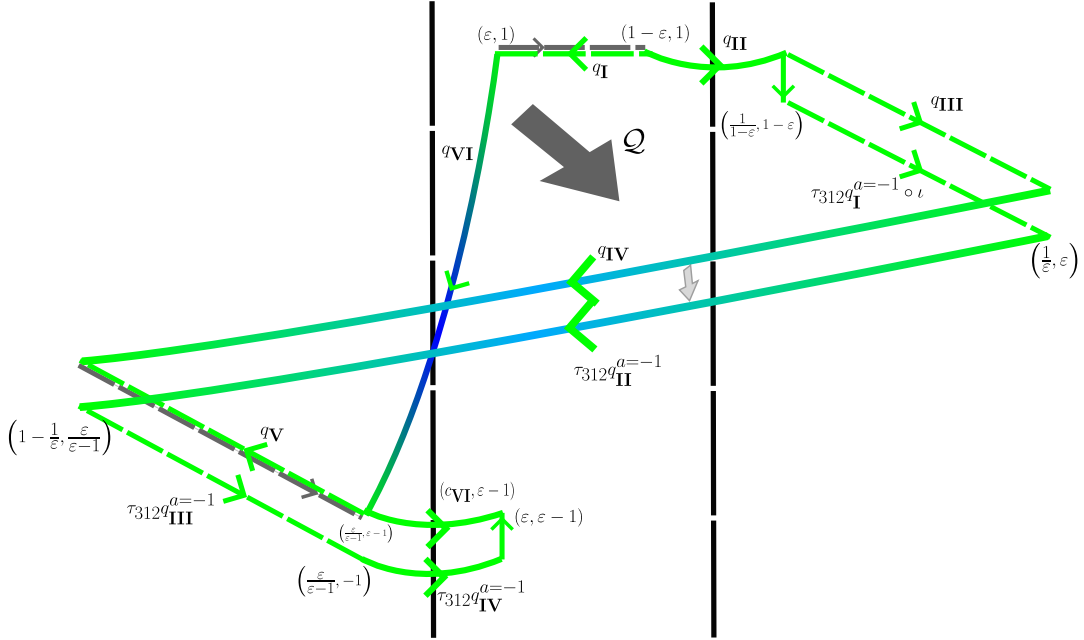}}
\caption{Whiskered $\Q$ followed by a harmless 2-path}
\label{fig:whiskered123}
\end{figure}
picking up $W^{c_\mathbf{VI}}\left(W^{p_\mathbf{V}}\right)^{-1}W^\Q W^{c_\mathbf{I}}$ then applying a final harmless 2-path as above.
\end{enumerate}
\end{constr}

\newpage
\section*{Acknowledgments}
I would like to thank Alexander Schenkel and Robert Laugwitz for many illuminating discussions over the past year. I would also like to thank João Faria Martins for a helpful conversation circa beginning of January.

%%%%%%%%%%%%%%%%%%%%%%%%%%%%%%%%%%%%%%%%%%%%%%%%%%%%%%

\appendix
\section{Iterated integrals}
We provide this appendix for two reasons: the first is that we want to recall some basic facts in \cite[Appendix XIX.11]{Kassel} surrounding iterated integrals and single-variable multiple polylogarithms because such facts will be appealed to in Section \ref{sec:Constructing Hex} for deriving the lowest order terms of various 2-holonomies. The second reason is that we wish to present a summary of the material in \cite[Theorem 20]{BRW} for the construction of Drinfeld's Knizhnik-Zamolodchikov associator series as the limit (as $\varepsilon\to0$) of the non-singular factor of a straightforward parallel transport because such a result will be appealed to throughout Sections \ref{sec:Constructing Hex} and \ref{sec:Breen}.

%%%%%%%%%%%%%%%%%%%%%%%%%%%%%%%%%%%%%%%%%%%%%%%%%%%%%%%%%%%%%%%

\subsection{Single-variable multiple polylogarithms}
\begin{defi}
The \textbf{Riemann zeta function} is defined as 
\begin{equation}\label{eq:Riemann zeta as infinite sum}
\zeta(s):=\sum_{n=1}^\infty\frac{1}{n^s}
\end{equation}
for $\Re(s)>1$ and analytically continued elsewhere.
\end{defi}
\begin{ex}
The explicit value 
\begin{equation}\label{eq:zeta(2)=pi^2/6}
\zeta(2)=\frac{\pi^2}{6}\quad,
\end{equation}
is known as \textbf{Euler's solution to the Basel problem}. 
\end{ex}
The Riemann zeta function admits the following generalisation.
\begin{defi}
The \textbf{polylogarithm} is defined as
\begin{equation}\label{eq:definition of polylogarithm}
\Li_s(z):=\sum_{n=1}^\infty\frac{z^n}{n^s}
\end{equation}
for $(s\in\bbC,|z|<1)$ and analytically continued elsewhere.
\end{defi}
\begin{rem}
The polylogarithm \eqref{eq:definition of polylogarithm} is indeed an extension of the Riemann zeta function \eqref{eq:Riemann zeta as infinite sum} because $\Li_s(1)=\zeta(s)$ but it is \textit{also} an extension of the natural logarithm function (ergo the name),
\begin{equation}\label{eq:Li_1(z)=-ln(1-z)}
\Li_1(z):=\sum_{n=1}^\infty\frac{z^n}{n}=-\ln(1-z)\quad.
\end{equation}
\end{rem}
In addition to the polylogarithm, there exists another generalisation of the Riemann zeta function.
\begin{defi}\label{def:MZV}
Given $k\in\bbN^*$, the \textbf{multiple zeta function} is defined as 
\begin{equation}\label{eq:multiple zeta function}
\zeta(s_1,\ldots,s_k):=\sum_{n_1>n_2>\cdots>n_k\geq1}^\infty\frac{1}{n_1^{s_1}\cdots n_k^{s_k}}
\end{equation}
for $\{\sum_{j=1}^i\Re(s_j)>i\,|\,1\leq i\leq k\}$ and analytically continued elsewhere. If $s_1\in\bbN^{\geq2}$ and $s_2,\ldots,s_k\in\bbN^*$ then we call \eqref{eq:multiple zeta function} a \textbf{multiple zeta value} (MZV). 
\end{defi}
Finally, we recall the generalisation of both the polylogarithm \eqref{eq:definition of polylogarithm} and the multiple zeta function \eqref{eq:multiple zeta function}.
\begin{defi}\label{def:multiple polylogarithm}
Given $k\in\bbN^*$, the \textbf{multiple polylogarithm} is defined as 
\begin{equation}\label{def:single-variable multiple polylogarithm}
\Li_{s_1,\ldots,s_k}(z_1,\ldots,z_k):=\sum_{n_1>n_2>\cdots>n_k\geq1}^\infty\frac{z_1^{n_1}\cdots z_k^{n_k}}{n_1^{s_1}\cdots n_k^{s_k}}
\end{equation}
for $\{(s_i\in\bbC,|z_i|<1)\,|\,1\leq i\leq k\}$ and analytically continued elsewhere. Setting $z_1=z$ and $z_2=\ldots=z_k=1$, we refer to 
\begin{equation}\label{eq:single-variable multiple polylogarithm}
\Li_{s_1,\ldots,s_k}(z):=\Li_{s_1,\ldots,s_k}(z,\{1\}^{k-1}):=\sum_{n_1>n_2>\cdots>n_k\geq1}^\infty\frac{z^{n_1}}{n_1^{s_1}\cdots n_k^{s_k}}
\end{equation}
as a \textbf{single-variable multiple polylogarithm}.
\end{defi}
\begin{rem}
The single-variable multiple polylogarithm \eqref{eq:single-variable multiple polylogarithm} is clearly a generalisation of the polylogarithm \eqref{eq:definition of polylogarithm} and an extension of the multiple zeta function \eqref{eq:multiple zeta function}. Furthermore, something analogous to \eqref{eq:Li_1(z)=-ln(1-z)} holds, i.e.,
\begin{equation}\label{Li_(1,...,1)(z)}
\Li_{\{1\}^k}(z)=\frac{\big(-\ln(1-z)\big)^k}{k!}\quad.
\end{equation}
\end{rem}
As in \cite[(11.18) and (11.19) of Appendix XIX.11]{Kassel}, we record the following integral expressions which are trivial to prove:
\begin{equation}\label{eq:(11.18/19) of Kassel}
\Li_{s_1+1,s_2\ldots,s_k}(z)=\int_0^z\frac{\Li_{s_1,s_2\ldots,s_k}(r)}{r}\dd r\qquad,\qquad\Li_{1,s_1,\ldots,s_k}(z)=\int_0^z\frac{\Li_{s_1,\ldots,s_k}(r)}{1-r}\dd r\quad.
\end{equation}

\begin{rem}\label{rem:iterated integral defined inductively}
Given $\bbC$-valued 1-forms $\{\omega_i:=f_i(s)\dd s\}_{1\leq i\leq n}$ defined over the interval $[a,b]\subset\bbR\,$, define the iterated integral $\int_a^b\omega_1\cdots\omega_n$ inductively by
\begin{subequations}
\begin{equation}
\int_a^b\omega_1=\int_a^bf_1(s)\dd s
\end{equation}
and
\begin{equation}\label{eq:iterated integral notation}
\int_a^b\omega_1\cdots\omega_n=\int_a^bf_1(s)\left(\int_a^s\omega_2\cdots\omega_n\right)\dd s\quad.
\end{equation}
\end{subequations}
Consider the particular 1-forms:
\begin{equation}\label{eq:particular 1-forms}
\Omega_0:=\frac{\dd s}{s}\qquad,\qquad\Omega_1:=\frac{\dd s}{s-1}\quad,
\end{equation}
then, as in \cite[(11.14) of Appendix XIX.11]{Kassel}, a simple argument by induction on $k\in\bbN^*$ gives us:
\begin{subequations}\label{subeq:int_a^b Omega^k}
\begin{alignat}{2}
\int_a^b\Omega_0^k=&\frac{1}{k!}\left(\ln\frac{b}{a}\right)^k\quad&&,\label{eq:int_a^b Omega_0^k}\\
\int_a^b\Omega_1^k=&\frac{1}{k!}\left(\ln\frac{1-b}{1-a}\right)^k\quad&&.\label{eq:int_a^b Omega_1^k}
\end{alignat}
\end{subequations}
For $g\in\bbN^*$ and $p_1,q_1,\ldots,p_g,q_g\in\bbN^*$, we can use \eqref{eq:(11.18/19) of Kassel} to notice that
\begin{equation}\label{eq:iterated integral expression for multiple zeta}
\int_0^1\Omega_0^{p_1}\Omega_1^{q_1}\cdots\Omega_0^{p_g}\Omega_1^{q_g}=(-1)^{\sum_{i=1}^gq_i}\zeta(p_1+1,\{1\}^{q_1-1},\ldots,p_g+1,\{1\}^{q_g-1})
\end{equation}
thus \eqref{eq:zeta(2)=pi^2/6} gives us
\begin{equation}\label{eq:int_0^1 Omega_0 Omega_1=-pi^2/6}
\int_0^1\Omega_0\Omega_1=-\zeta(2)=-\frac{\pi^2}{6}\quad.
\end{equation}
\end{rem}

%%%%%%%%%%%%%%%%%%%%%%%%%%%%%%%%%%%%%%%%%%%%%%%%%%%%%%%%%%%%%%%

\subsection{BRW's construction of Drinfeld's KZ associator series}\label{subsec:BRW construction of Phi}
\begin{constr}\label{con:formal linear ODEs in general}
Following \cite[Subsection 1.2]{BRW}, given a complex vector space $V$ and the unital associative $\bbC$-algebra $\End(V)$, we fix $Y\in\C^\infty([0,1],\bbC)\otimes_\bbC\End(V)$ and consider the following formal linear ODE in the formal power series $\omega\in\big(\C^\infty([0,1],\bbC)\otimes_\bbC\End(V)\big)[[\hbar]]$,
\begin{equation}\label{eq:formal linear ODE}
\frac{\dd\omega}{\dd s}=\hbar Y\omega\qquad,\qquad\omega(0)=1\quad.
\end{equation}
Denote the integral from $0$ to $s\in[0,1]$ by $I_{s0}$ and the left multiplication by $Y$ as $L_Y$ then \eqref{eq:formal linear ODE} gives us
\begin{equation}
\omega(s)-1=\hbar\big(I_{s0}\circ L_Y\big)(\omega)\qquad\implies\qquad\Big(\Id-\hbar\big(I_{s0}\circ L_Y\big)\Big)(\omega)=1\quad.
\end{equation}
The factor of $\hbar$ is what allows us to invert the LHS operator as follows,
\begin{align}
\omega(s)=&\Big(\Id-\hbar\big(I_{s0}\circ L_Y\big)\Big)^{-1}(1)\nn\\=&\sum_{r=0}^\infty\hbar^r\big(I_{s0}\circ L_Y\big)^{\circ r}(1)\nn\\=&1+\sum_{r=1}^\infty\hbar^r\int_0^sY(s_1)\left[\int_0^{s_1}Y(s_2)\left(\cdots\int_0^{s_{r-1}}Y(s_r)\dd s_r\cdots\right)\dd s_2\right]\dd s_1\label{eq:formal ODE solution as iterated integrals}
\end{align}
\end{constr}
Following \cite[Subsection 2.2]{BRW}, given two elements $A,B\in\End(V)$, we define the (flat) formal \textbf{Knizhnik-Zamolodchikov connection} 
\begin{equation}
\Gamma(A,B):=\hbar\left(\frac{A}{x}+\frac{B}{x-1}\right)\dd x
\end{equation}
on $(0,1)\subset\bbR$. The parallel transport from $0<\varepsilon<\tfrac{1}{4}$ to $\frac{3}{4}<1-\varepsilon<1$ along the affine path $c_\mathbf{I}(s):=\varepsilon+s(1-2\varepsilon)$ is given by \eqref{eq:formal ODE solution as iterated integrals} which, using the juxtaposition notation for iterated integrals \eqref{eq:iterated integral notation}, now becomes
\begin{equation}\label{eq:c_epsilon PT}
W^{c_\mathbf{I}}=1+\sum_{r=1}^\infty\hbar^r\sum_{i_1,\ldots,i_r=0}^1\int_\varepsilon^{1-\varepsilon}\frac{\mathrm{d}u_1}{u_1-i_1}\cdots\frac{\mathrm{d}u_r}{u_r-i_r}\A_{i_1}\cdots\A_{i_r}
\end{equation}
where $\A_0:=A$ and $\A_1:=B$. Using \eqref{eq:particular 1-forms}, we may rewrite the parallel transport \eqref{eq:c_epsilon PT} more compactly as 
\begin{equation}\label{eq:super compact expression for W^(c_I)}
W^{c_\mathbf{I}}=\sum_{r=0}^\infty\hbar^r\sum_{i_1,\ldots,i_r=0}^1\int_\varepsilon^{1-\varepsilon}\prod_{m=1}^r\Omega_{i_m}A^{1-i_m}B^{i_m}
\end{equation}

\begin{rem}\label{rem:at least diverging terms}
If $[A,B]=0$ then, instead, we can simply exponentiate the integration of the connection,
\begin{equation}\label{eq:troublesome terms exposed}
W^{c_\mathbf{I}}=\exp\left[\hbar\int_\varepsilon^{1-\varepsilon}\left(\frac{A}{x}+\frac{B}{x-1}\right)\dd x\right]=e^{\hbar\ln(\varepsilon)B}e^{\hbar\ln(1-\varepsilon)(A-B)}e^{-\hbar\ln(\varepsilon)A}
\end{equation}
and this clearly diverges as $\varepsilon\to0$.
\end{rem}
Remark \ref{rem:at least diverging terms}, particularly \eqref{eq:troublesome terms exposed}, makes it clear that we need to extract \textit{at least} the terms $e^{\hbar\ln(\varepsilon)B}$ and $e^{-\hbar\ln(\varepsilon)A}$ in order to acquire something that is \textit{potentially} finite. In other words, we want to show that 
\begin{equation}\label{eq:potentially finite Drinfeld}
\Phi(A,B):=\lim_{\varepsilon\to0}\Phi^\varepsilon(A,B):=\lim_{\varepsilon\to0}\left(e^{-\hbar\ln(\varepsilon)B}W^{c_\mathbf{I}}e^{\hbar\ln(\varepsilon)A}\right)
\end{equation}
is actually finite.
\begin{constr}
We factorise a continuous piecewise smooth reparametrisation of the affine path $c_\mathbf{I}(s):=\varepsilon+s(1-2\varepsilon)$ as the concatenation of two ``exponential half paths". Explicitly, introduce:
\begin{equation}
c_1(s):=\tfrac{1}{2}(2\varepsilon)^{1-s}\qquad,\qquad c_2(s):=1-\tfrac{1}{2}(2\varepsilon)^s
\end{equation}
and 
\begin{equation}
\gamma(s):=\begin{cases}
    \frac{\tfrac{1}{2}(2\varepsilon)^{1-2s}-\varepsilon}{1-2\varepsilon}\quad\,\,,\qquad&0\leq s\leq\tfrac{1}{2}\\
    \frac{1-\tfrac{1}{2}(2\varepsilon)^{2s-1}-\varepsilon}{1-2\varepsilon}\,\,,\qquad&\tfrac{1}{2}\leq s\leq1
    \end{cases}
\end{equation}
then it is straightforward to check that $c_\mathbf{I}\circ\gamma=c_2c_1$. Furthermore, the \textbf{interval inversion} 
\begin{subequations}\label{eq:interval inversion}
\begin{alignat}{2}
\iota:[0,1]&\longrightarrow[0,1]\qquad&&,\\
s&\longmapsto1-s\qquad&&,
\end{alignat} 
\end{subequations}
is such that
\begin{subequations}
\begin{alignat}{2}
c_1=&\iota\circ c_2\circ\iota\qquad&&,\\
\iota^*\left(\Gamma(A,B)\right)&=\Gamma(B,A)\qquad&&.\label{eq:nice fact about iota}
\end{alignat}
\end{subequations}
These facts (together with the standard functoriality properties of parallel transport in Remark \ref{rem:hol axioms}) give us 
\begin{equation}
^{\Gamma(A,B)}W^{c_\mathbf{I}}={}^{\Gamma(A,B)}W^{c_2}\left({}^{\Gamma(B,A)}W^{c_2}\right)^{-1}\quad.
\end{equation}
In particular, \cite[Lemma 19]{BRW} tells us that 
\begin{subequations}
\begin{equation}
{}^{\Gamma(A,B)}W^{c_2}=e^{\hbar\ln(\varepsilon)B}e^{\hbar\ln(2)B}\Xi^\varepsilon_{B,A}
\end{equation}
with
\begin{equation}
\Xi^\varepsilon_{B,A}:=1+\sum_{r=1}^\infty\hbar^r\sum_{\ell_1,\ldots,\ell_r=0}^\infty\hbar^{\sum_{i=1}^r\ell_i}\,\I^\varepsilon_{\ell_1\ldots\ell_r}\ad^{\ell_1}_B(A)\cdots\ad^{\ell_r}_B(A)
\end{equation}
and where
\begin{equation}\label{eq:epsilon iterated integral for associator}
\I^\varepsilon_{\ell_1\ldots\ell_r}:=\frac{1}{\ell_1!\cdots\ell_r!}\int_0^{-\ln(2\varepsilon)}\frac{\tau_1^{\ell_1}}{2e^{\tau_1}-1}\dd\tau_1\cdots\frac{\tau_r^{\ell_r}}{2e^{\tau_r}-1}\dd\tau_r\qquad.
\end{equation}
\end{subequations}
The proof of \cite[Lemma 19]{BRW} shows that \eqref{eq:epsilon iterated integral for associator} is indeed finite under the limit $\lim_{\varepsilon\to0}$. Therefore, \eqref{eq:potentially finite Drinfeld} tells us that\footnote{Notice as well that this form makes it obvious that \begin{equation}\label{eq:Drinfeld swap is inverse}
\Phi(A,B)=\Phi(B,A)^{-1}\quad.
\end{equation}} 
\begin{subequations}\label{subeq:BRW's explicit Phi formula}
\begin{equation}\label{eq:associator as factor of PT}
\Phi(A,B)=e^{\hbar\ln(2)B}\Xi_{B,A}\big(\Xi_{A,B}\big)^{-1}e^{-\hbar\ln(2)A}
\end{equation}
with
\begin{equation}
\Xi_{B,A}:=1+\sum_{r=1}^\infty\hbar^r\sum_{\ell_1,\ldots,\ell_r=0}^\infty\hbar^{\sum_{i=1}^r\ell_i}\,\I_{\ell_1\ldots\ell_r}\ad^{\ell_1}_B(A)\cdots\ad^{\ell_r}_B(A)
\end{equation}
and where
\begin{equation}\label{eq:iterated integral for associator}
\I_{\ell_1\ldots\ell_r}:=\frac{1}{\ell_1!\cdots\ell_r!}\int_0^\infty\frac{\tau_1^{\ell_1}}{2e^{\tau_1}-1}\dd\tau_1\cdots\frac{\tau_r^{\ell_r}}{2e^{\tau_r}-1}\dd\tau_r\qquad.
\end{equation}
\end{subequations}
\end{constr}

\end{document}